\definecolor{hanblue}{HTML}{2B77A4}
\definecolor{hanblue}{rgb}{0.27, 0.42, 0.81}
\theoremstyle{definition}
\newtheorem*{cnj}{Geometric Arveson-Douglas Conjecture}
\newtheorem{thm}{Theorem}[section]
\newtheorem{cor}[thm]{Corollary}
\newtheorem{lem}[thm]{Lemma}
\newtheorem{prop}[thm]{Proposition}
\newtheorem{exm}[thm]{Example}
\newtheorem{rem}[thm]{Remark}
\newtheorem{defi}[thm]{Definition}
\numberwithin{equation}{section}
\def\refn#1.#2{\expandafter\def\csname#1\endcsname{[#2]}}
\def\refnr#1.{\csname#1\endcsname}
\begin{document}
\baselineskip  1.2pc
\title[Essential normality of quotient submodules]{Essential normality of quotient submodules over strongly pseudoconvex finite manifolds}
\author[L. Ding]{Lijia Ding}
\address{School of Mathematics and Statistics,
Zhengzhou University,
Zhengzhou,
Henan 450001,
P. R. China}
\email{ljding@zzu.edu.cn}

\subjclass[2020]{Primary 46H25; Secondary 47A13; 32Q28; 19K56.}
\keywords{Hilbert module; Essential normality; Generalized Toeplitz operator; Embedded desingularization; Index theorem.}

\begin{abstract}
We investigate the  $p$-essential normality of  Hilbert quotient submodules on a relatively compact smooth strongly pseudoconvex domain in a complex manifold satisfying  Property (S). For analytic subvarieties that have compact singularities and transversely intersect the strongly pseudoconvex boundary, we prove that the corresponding Bergman-Sobolev quotient submodules are $p$-essentially normal whenever $p$ exceeds the dimension of the noncompact part of the analytic subvarieties. As a consequence, we partially confirm the geometric Arveson-Douglas Conjecture and resolve an open problem regarding the trace-class antisymmetric sum of truncated Toeplitz operators within a broader context. Moreover, we provide applications in $K$-homology and geometric invariant theory. 
\end{abstract}
\maketitle
\section{Introduction}
The study of $p$-essentially normal Hilbert modules began with Arveson's seminal work \cite{Arv98}. Arveson applied the results on $p$-essential normality to the geometric invariant theory of multioperators and obtained the operator-theoretic Gauss-Bonnet-Chern formula \cite{Arv02}, which relates the curvature invariant and the Fredholm index of the Dirac operator for an operator tuple. The research of this formula forms a crucial part of Arveson's influential program \cite{Arv05} to establish the analogy with the Atiyah-Singer index theorem in multioperator theory. Douglas \cite{Dou06A} expanded upon the study on the essential normality of Hilbert modules by aiming to establish a new kind of index theorem for subvarieties with singularities in smooth strongly pseudoconvex domains. This led to an analytic generalization of the Grothendieck-Riemann-Roch theorem for varieties with boundaries and singularities \cite{DJTY18, DTY16}.

The Arveson-Douglas Conjecture, initially formulated by Arveson \cite{Arv05} and later refined by Douglas \cite{Dou06A},  is not only a central issue in the $p$-essential normality theory of Hilbert modules but also connects with several famous conjectures in different subjects, such as the Hodge conjecture in algebraic geometry and Arveson's hyperrigidity conjecture in noncommutative analysis. 
The classical geometric version of the Arveson-Douglas Conjecture states that the quotient submodule on the unit ball determined by a homogeneous complex algebraic variety (the affine cone over a projective algebraic variety)  is $p$-essentially normal whenever $p$ exceeds the  dimension of the algebraic variety. This conjecture has been proved in various special cases (see, for example, \cite{Arv05, DJTY18, EnE15, KS12}), however, the general conjecture remains unresolved. Meanwhile,  the following generalized geometric Arveson-Douglas Conjecture on strongly pseudoconvex domains in complex Euclidean spaces has also been increasingly considered in the past decade. An intriguing aspect of these conjectures is that an operator-theoretic property for a Hilbert module is solely determined by the  dimension of the corresponding analytic subvariety, a purely geometric notion.  
The  partially affirmative results (see, for example, \cite{Dou06A, DGW22, EnE15, UpW16, WaX19}) on the generalized geometric Arveson-Douglas Conjecture reveal the profound interplay among the operator algebra and operator theory, complex and algebraic geometry, and several complex variables.
\begin{cnj} Let  $\Omega\subset \mathbb{C}^d$ be a bounded smooth strongly pseudoconvex domain and $V$ be an analytic subvariety  in some neighborhood of $\overline{\Omega}$  which intersects transversally with the boundary $\partial \Omega,$ then the   Hardy or  (weighted)  Bergman quotient submodule $M_V^\bot$ is $p$-essentially normal for all  $p>\text{dim}_{\mathbb{C}}\hspace{0.1em} (V\cap \Omega).$ 
\end{cnj} 

The purpose of this paper is to establish the $p$-essential normality theory in the context of complex manifolds,  which broadly generalizes and extends the $p$-essential normality theory over complex Euclidean spaces as mentioned above. More precisely, we address the problem for $p$-essential normality of the Bergman-Sobolev quotient submodules determined by analytic subvarieties on a relatively compact strongly pseudoconvex domain with the smooth boundary in a complex manifold with the Property (S), which can be regarded as the generalized geometric Arveson-Douglas Conjecture on a relatively compact strongly pseudoconvex domain in a complex manifold. Let $(W,w)$ be a Hermitian manifold of  dimension $d$ and $\Omega\Subset  W$ be a relatively compact strongly pseudoconvex domain with smooth boundary  $\partial \Omega.$  It is well-known from Grauert's theorem \cite{Gra58, Gra62} that  $\Omega$ is a proper modification \cite{CM03, GR84} of a Stein space. Moreover, $\Omega$ is a Stein manifold if and only if it has no positive dimensional compact analytic subvarieties. Following Kohn and Rossi \cite{KoR65}, the pair $(W,\Omega)$ is called a strongly pseudoconvex finite manifold. 

 \begin{defi}   A strongly pseudoconvex finite manifold $(W,\Omega)$ is said to have Property (S) if there exists a proper modification  $\iota: W\rightarrow\breve{W}$ for a Stein manifold  $\breve{W}$ along finitely many points such that $\iota$ is biholomorphic on a neighborhood of $\partial \Omega.$
\end{defi}

As an example, let $\breve{\Omega}\subset \breve{W}=\mathbb{C}^d$ be a bounded smooth strongly pseudoconvex domain, then the blowing-ups along finitely many points inside $\breve{\Omega}$ provide examples of a strongly pseudoconvex finite manifold with Property (S); particularly, $(\breve{W},\breve{\Omega})$ has Property (S). There may not exist a global holomorphic coordinate for the complex manifold $\Omega,$ thus the original definition of the  Hilbert module by the coordinate multipliers over the polynomial ring will no longer work in the general setting. For this reason, inspired by \cite{ Din24, Dou06A} we introduce the following relatively intrinsic definition of the Hilbert module over the holomorphic function algebra $\mathcal{O}(\overline{\Omega}),$ where $\mathcal{O}(\overline{\Omega})$ is the collections of holomorphic functions on some neighborhood of $\overline{\Omega}.$ The Grauert theorem mentioned above ensures that $\mathcal{O}(\overline{\Omega})$ is nontrivial. In fact, ${\mathcal{O}(\overline{\Omega})}$ is the inductive limit of the sets of sections of the holomorphic germ sheaf on $W$ over the open sets that contain $\overline{\Omega}.$  
 \begin{defi}\label{def} 
  A Hilbert space $H$  is called a Hilbert module over the holomorphic function algebra $\mathcal{O}(\overline{\Omega}),$ if for every $f\in \mathcal{O}(\overline{\Omega})$ there exists a bounded linear operator $M_f\in B(H)$ such that the map  $$\mathcal{O}(\overline{\Omega})\rightarrow B(H),\quad f\longmapsto M_f$$ is a representation of the algebra $\mathcal{O}(\overline{\Omega})$ on $H.$
 \end{defi}
 
Notice that $\overline{\Omega}\subset W$ is compact,  which implies that the Bergman-Sobolev space $\mathcal{O}^s(\overline{\Omega})$ is a Hilbert module in the Definition \ref{def} if each $M_f$ is chosen to be the multiplicative operator with the symbol $f\in\mathcal{O}(\overline{\Omega})$ for a given real number $s\in\mathbb{R}.$ Similarly, we can define the notion of Hilbert submodule and Hilbert quotient submodule.   When $\Omega$  is a bounded smooth strongly pseudoconvex domain in $\mathbb{C}^d,$  the Bergman-Sobolev spaces $\mathcal{O}^s(\overline{\Omega})$ with the canonical norm and the usual weighted Bergman spaces coincide if $s<{1}/{2}.$ In this case, we see that the polynomial ring $\mathbb{C}[\bm{z}]\subsetneq \mathcal{O}(\overline{\Omega}),$ thus every Hilbert module over the algebra $\mathcal{O}(\overline{\Omega})$ must be a Hilbert module over the polynomial algebra $\mathbb{C}[\bm{z}].$ Moreover,  when $\Omega=\mathbb{B}^d$  the author proved in  \cite{Din24} that a closed subspace in the Hardy space or weighted Bergman space is a submodule (resp. quotient submodule) over the polynomial algebra $\mathbb{C}[\bm{z}]$ if and only if it is a submodule (resp. quotient submodule) over the algebra $\mathcal{O}(\overline{\mathbb{B}^d}),$  namely this definition coincides with the original definition on the unit ball. 
 Correspondingly, the  $p$-essential normality of a Hilbert module over the algebra $\mathcal{O}(\overline{\Omega})$ should be modified and generalized. Recall from \cite{HeH75} that the 
antisymmetric sum  for given operators $A_1, \cdots, A_l$ on a Hilbert space $H$ is defined as
$$[A_1,\cdots ,A_l] = \sum_{\sigma\in S_l} \text{sgn}(\sigma)A_{\sigma(1)}\cdots A_{\sigma(l)},$$ where $S_l$ is the symmetric group on $\{1,  \cdots, l\}$ and $\text{sgn}(\sigma)$ is the signum of $\sigma.$
 
\begin{defi}\label{def1} Let $H$ be a Hilbert module  as in Definition \ref{def},   $H$ is said to be  $p$-essentially normal of length $l\in \mathbb{N}_+,$ denoted by $(p;l)$-essentially normal,   if all antisymmetric sums of the form \begin{equation}\label{come} [M_{f_1},M_{g_1}^*,\cdots,M_{f_l},M_{g_l}^*]\end{equation} belong to the Schatten $p$-class $\mathcal{L}^p$ for arbitrary  $f_1,\cdots,f_l,g_1,\cdots,g_l\in  \mathcal{O}(\overline{\Omega}),$ where $ 0< p\leq \infty.$  
\end{defi}
For the submodule and quotient submodule, each operator in (\ref{come}) should be replaced by its restriction and compression, respectively. We point out that a weighted Bergman module is $p$-essential normal in the sense of  Arveson  \cite{Arv05} if and only if it is $(p;1)$-essential normal in the above Definition \ref{def1} when  $\Omega=\mathbb{B}^d;$   the similar is also true for weighted Bergman submodules and quotient submodules \cite{Din24}. Therefore, $(p;1)$-essential normal will also be referred to as the traditional term $p$-essential normal throughout the paper. 

 To express our results clearly, we introduce some notations.  Let $\Omega, W$  as above,  and $V$ be an analytic subvariety in some relatively compact open neighborhood $\widetilde{\Omega}$ of $\overline{\Omega}$ such that $V_\Omega={V}\cap \Omega\neq\emptyset,$ whose singularities are compact and contained in $\Omega.$ Denote $V^+$ by the union of the noncompact irreducible components of $V,$ which is locally a finite union. Clearly, $V^+\subset \widetilde{\Omega}$ is  also a subvariety  and $V^+\subset V,$ moreover,  $(V\cap \Omega)^+\supset V^+\cap \Omega.$ We say that $V$  transversely intersects the  smooth boundary $\partial \Omega$  if $(V\cap \Omega)^+=\emptyset ,$ or $ (V\cap \Omega)^+ \neq\emptyset$ and its regular part $\text{Reg}\hspace{0.1em} (V)$ transversely intersects $\partial \Omega.$ We allow that ${V}$ has compact irreducible components inside  $\Omega.$  Let  $M_V^\bot\subset \mathcal{O}^{s}(\overline{\Omega})$ be the  Bergman-Sobolev quotient submodule determined by analytic subvariety $V$ for some $s\in\mathbb{R},$ where $M_V=\{f\in \mathcal{O}^{s}(\overline{\Omega}): f\vert_{V\cap\Omega}=\bm{0}\}$ is the submodule determined by $V.$ Observe that $M_V$ depends on $V\cap\Omega$, and if necessary, we shall write $M_{V\cap\Omega}$ instead of $M_V.$ The operator $S_f$ represents the compression of the multiplicative operator $M_f$ with symbol $f\in \mathcal{O}(\overline{\Omega})$,   also known as the truncated Toeplitz operator on $M_V^\bot$ with symbol $f$. Our main result is stated in the following theorem.
  \begin{thm}\label{main} Let $(W,\Omega)$ be a strongly pseudoconvex finite manifold with Property (S)  and $V$ be an analytic subvariety in some neighborhood of $\overline{\Omega}.$  If $V$ is smooth on $\partial \Omega$  and  intersects transversally with  $\partial \Omega,$  then the following hold for $s\in\mathbb{R}$:

(1) The  Bergman-Sobolev quotient submodule  $M_V^\bot\subset\mathcal{O}^{s}(\overline{\Omega})$ with the canonical norm is $(p;1)$-essentially normal for all   \begin{equation}\label{pes}
p>
\begin{cases}
\text{dim}_{\mathbb{C}}\hspace{0.1em} (V\cap \Omega)^+,&  (V\cap \Omega)^+ \neq\emptyset;\\ \notag
0,&  (V\cap \Omega)^+=\emptyset.
\end{cases}
\end{equation}

(2) The Bergman-Sobolev  quotient submodule  $M_V^\bot\subset\mathcal{O}^{s}(\overline{\Omega})$ with the canonical norm is  $(p;l)$-essentially normal with $l\geq2$ for all   \begin{equation}\label{pes}
p>
\begin{cases} \frac{1}{l+1} \hspace{0.1em} {\text{dim}_{\mathbb{C}}\hspace{0.1em} (V\cap \Omega)^+},
&  (V\cap \Omega)^+ \neq\emptyset;\\ \notag
0,&  (V\cap \Omega)^+=\emptyset. 
\end{cases}
\end{equation}
Moreover, for arbitrary  $f_1,\cdots,f_l,g_1,\cdots,g_l\in  \mathcal{O}(\overline{\Omega}),$ the antisymmetric sum $$[S_{f_1},S_{g_1}^*,\cdots,S_{f_l},S_{g_l}^*]\in \mathcal{L}^1$$  if $l\geq \text{dim}_{\mathbb{C}}\hspace{0.1em} (V\cap \Omega)^+,$ and its trace vanishes whenever $l>\text{dim}_{\mathbb{C}}\hspace{0.1em} (V\cap \Omega)^+.$
\end{thm}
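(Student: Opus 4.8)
\smallskip
\noindent\textbf{Proof strategy.} The plan is to reduce, through a chain of modifications each biholomorphic near $\partial\Omega$, to a model in which $V$ is a smooth submanifold transverse to $\partial\Omega$, and then to analyze the truncated Toeplitz operators on $M_V^\bot$ by means of the calculus of generalized Toeplitz operators. I would begin by disposing of the interior geometry. A compact irreducible component $C\Subset\Omega$ contributes only a finite-dimensional piece to $M_V^\bot$, so $M_V^\bot$ and $M_{V^+}^\bot$ differ by a finite-rank subspace; this changes neither the Schatten class of any antisymmetric sum nor the validity of a trace formula, and in the trace formula the contribution of such a $C$ will reduce to a boundaryless integral that vanishes by Stokes. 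Hence one may assume $V=V^+$ near $\partial\Omega$ and that every component of $V$ reaches $\partial\Omega$. An embedded desingularization $\pi\colon\widetilde W\to W$ centered along the compact interior singular locus of $V$ --- an isomorphism over a neighborhood of $\partial\Omega$ --- then replaces $V$ by its smooth strict transform; since $\pi$ is biholomorphic near $\partial\Omega$, the quotient submodules over $\Omega$ and over $\pi^{-1}(\Omega)$ are related by a bounded invertible operator that intertwines the Toeplitz operators up to finite rank and a smoothing operator localized away from $\partial\Omega\times\partial\Omega$, which lies in $\bigcap_{p>0}\mathcal L^p$ and is trace-negligible. Finally Property (S) supplies a proper modification of $\widetilde W$ onto a Stein manifold along finitely many points, again biholomorphic near $\partial\Omega$, and the same bookkeeping lets one assume $W$ is Stein. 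We are reduced to the case of a smooth $n$-dimensional $V$, $n=\dim_{\mathbb C}(V\cap\Omega)^+$, transverse to $\partial\Omega$, inside a strongly pseudoconvex $\Omega\Subset W$ with $W$ Stein of dimension $d$.

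The heart of the matter is to identify $M_V^\bot$ with a Bergman--Sobolev module on $D:=V\cap\Omega$ compatibly with the Toeplitz structure. By transversality the restriction map $R\colon\mathcal O^{s}(\overline\Omega)\to\mathcal O^{s'}(\overline D)$, with $s'$ determined by $s$ and the codimension $d-n$, is bounded with kernel $M_V$ and closed range, so it restricts to a bounded isomorphism of $M_V^\bot$ onto a Bergman--Sobolev-type space on $D$. The key point --- and what I expect to be the main obstacle --- is that, after correction by an order-zero elliptic factor, this isomorphism conjugates the truncated Toeplitz operator $S_f$ on $M_V^\bot$ to the Toeplitz operator $T_{f|_D}$ on that space \emph{modulo an operator one order smoother than $S_f$}; a naive equivalence of modules would not suffice, since essential normality is not preserved by arbitrary bounded isomorphisms. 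I would establish this by comparing the Bergman--Sobolev kernels of $\Omega$ and of $D$ near the transverse set $V\cap\partial\Omega$ via the Boutet de Monvel--Sj\"{o}strand parametrices, reading off that they agree to leading order along $V$; equivalently, $R$ is realized as a Fourier integral operator with complex phase of the predicted order and the conjugation is an equality of principal symbols in the generalized Toeplitz calculus. This is the manifold and singular-boundary analogue of the description, known over $\mathbb C^d$, of a quotient submodule as the Bergman module of its variety, and it is where transversality, the lack of global coordinates, and the sharp order count all have to be reconciled.

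With the identification in hand the remaining estimates become symbolic. Realizing $\mathcal O^{s'}(\overline D)$ in a Hardy-type space of $\partial D$, the projection and the Toeplitz operators $T_h$ are generalized Toeplitz operators on the symplectic cone $\Sigma\subset T^*(\partial D)$ of real dimension $2n$, with $T_h$ of order $0$ and principal symbol $h|_\Sigma$; a generalized Toeplitz operator of order $\leq-m$ on $\Sigma$ lies in $\mathcal L^p$ whenever $mp>n$. For $l=1$ the commutator $[T_f,T_g^*]$ has order $-1$, with principal symbol $\tfrac1i\{f,\bar g\}$ (which vanishes when $n=1$, improving the order to $\leq-2$), giving part (1). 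For $l\geq2$ the antisymmetric sum of the $2l$ order-zero operators $T_{f_1},T_{g_1}^*,\dots$ has \emph{a priori} order $-l$ because their principal symbols commute; the crucial combinatorial lemma is that in the Toeplitz symbol calculus the order $-l$ symbol of this antisymmetrized product cancels identically, so the operator has order $\leq-(l+1)$ and hence lies in $\mathcal L^p$ for $p>\tfrac n{l+1}$ --- this is part (2), and in particular the sum lies in $\mathcal L^1$ once $l\geq n$, the case $l=n=1$ being covered by the order $\leq-2$ bound just noted. When it is trace class, the antisymmetric sum has trace given by the Boutet de Monvel--Guillemin trace formula, which upon descent to $D$ takes the Helton--Howe form $c_{n,l}\int_D df_1\wedge d\bar g_1\wedge\cdots\wedge df_l\wedge d\bar g_l$; this $(l,l)$-form vanishes identically on the $n$-dimensional $D$ once $l>n$, while each discarded compact component integrates to zero by Stokes, so the trace vanishes for $l>\dim_{\mathbb C}(V\cap\Omega)^+$. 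Transporting these conclusions back through the reductions of the first step --- the discarded finite-rank pieces and the one-order-smoother error terms being negligible in every relevant $\mathcal L^p$ and in trace --- completes the proof.
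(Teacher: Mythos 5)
Your overall strategy coincides with the paper's: desingularize the compact interior singularities by permissible blow-ups, use Property (S) to reduce the compact components to a finite-dimensional piece, and then run the Engli\v{s}--Eschmeier generalized Toeplitz calculus on the smooth transverse model. The genuine gap is in your reduction step. You transport the problem across the desingularization (and the Stein modification) by ``a bounded invertible operator that intertwines the Toeplitz operators up to finite rank and a smoothing operator.'' That is not enough: a similarity $\Phi$ which is merely bounded invertible conjugates $S_f$ by $\Phi(\cdot)\Phi^{-1}$ but conjugates $S_g^*$ by $(\Phi^*)^{-1}(\cdot)\Phi^*$, so the commutators $[S_f,S_g^*]$ and the antisymmetric sums --- which mix the operators with their adjoints --- are not controlled, even modulo $\bigcap_{p>0}\mathcal L^p$. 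You yourself observe that essential normality is not preserved by bounded isomorphisms, but you apply that caution only to the restriction map onto $V$, not to the blow-up. The paper closes exactly this gap: the canonical norms of Section 3 (built from $\Lambda_s=K^*M_{|\rho|^s}K$ and $B_s=F_s(\Lambda_0^{-1})$) are constructed precisely so that the pullback under a permissible blow-up is \emph{unitary} (Proposition \ref{blh}, via the identity $\check{\Lambda}_s=\eta^\ast\Lambda_s(\eta^\ast)^{-1}$ of Lemma \ref{luni}), whence $\tilde{\eta}\circ S_f^{\Omega}\circ\tilde{\eta}^\ast=S_{\eta^\ast(f)}^{\check{\Omega}}$ holds exactly (Lemma \ref{traf}) and antisymmetric sums transport with no error at all. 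To repair your argument you must either prove that your intertwiner is unitary for the canonical norms, or verify separately that its adjoint also intertwines modulo negligible terms; as written, the reduction does not go through.

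Two further points of comparison. For the vanishing of the trace when $l>\dim_{\mathbb C}(V\cap\Omega)^+$ you invoke a Helton--Howe type integral formula $c\int_D df_1\wedge d\bar g_1\wedge\cdots$ together with a degree count; the paper instead argues algebraically, expanding the antisymmetric sum of $2l$ operators into commutators against antisymmetric sums of $2l-2$ operators (already trace class at that stage) and using that $\operatorname{Tr}[A,B]=0$ for $A$ bounded self-adjoint and $B$ trace class. Your route is plausible but requires first establishing the trace formula for these quotient modules in the generalized Toeplitz setting, a substantial extra step that the paper's more elementary argument avoids. Finally, your ``crucial combinatorial lemma'' for part (2) --- that the order $-l$ leading symbol of the antisymmetrized product of commutators cancels, improving the order to $-(l+1)$ --- is exactly the step the paper also asserts (from (P3) and (P5)) without a detailed computation; you are right to single it out as the point on which part (2) hinges, and a complete write-up must actually compute that leading symbol (a multiple of $\det(\{f_i,\bar g_j\})$ on $\Sigma$) and justify its vanishing rather than assume it.
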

We remark that if $V\cap \Omega\subset\Omega$  is a compact analytic subvariety of positive dimension then the above theorem provides a more precise result than the prediction of geometric Arveson-Douglas Conjecture,  i.e., the  Bergman-Sobolev quotient submodule  $M_V^\bot$ is  $p$-essentially normal for all $p>0.$ This is a new phenomenon in the general case of strongly pseudoconvex finite manifolds.
  When $W$ is a Stein manifold,  the noncompact part $(V\cap \Omega)^+$ is never empty unless $V\cap \Omega$ is zero-dimensional, and $\text{dim}_{\mathbb{C}}\hspace{0.1em} (V\cap \Omega)^+=\text{dim}_{\mathbb{C}}\hspace{0.1em} (V\cap \Omega)$ if  $(V\cap \Omega)^+$ is nonempty. For the crucial case where $W=\mathbb{C}^d,$ we have the following corollary which gives a partial affirmative answer for the aforementioned geometric Arveson-Douglas Conjecture. 
 
 \begin{cor}\label{corA} Let  $\Omega\subset\mathbb{C}^d$  be a bounded smooth  strongly pseudoconvex domain   and $V$ be an  analytic  subvariety  in some neighborhood of $\overline{\Omega}.$ If $V$ is smooth on $\partial \Omega$  and  intersects transversally with  $\partial \Omega,$ 
 then the following hold for $s\in\mathbb{R}$:

(1) The  Bergman-Sobolev quotient submodule  $M_V^\bot\subset\mathcal{O}^{s}(\overline{\Omega})$ with the  canonical norm  is  $(p;1)$-essentially normal for all   $p>\text{dim}_{\mathbb{C}}\hspace{0.1em} (V\cap \Omega).$

(2) The  Bergman-Sobolev quotient submodule  $M_V^\bot\subset\mathcal{O}^{s}(\overline{\Omega})$ with the canonical norm is $(p;l)$-essentially normal with $l\geq2$ for all   $p> \frac{1}{l+1} \hspace{0.1em} {\text{dim}_{\mathbb{C}}\hspace{0.1em} (V\cap \Omega)}.$ Moreover, for arbitrary  $f_1,\cdots,f_l,g_1,\cdots,g_l\in  \mathcal{O}(\overline{\Omega}),$ the antisymmetric sum $$[S_{f_1},S_{g_1}^*,\cdots,S_{f_l},S_{g_l}^*] \in \mathcal{L}^1$$ if $l\geq \text{dim}_{\mathbb{C}}\hspace{0.1em} (V\cap \Omega),$ and its trace vanishes whenever $l>\text{dim}_{\mathbb{C}}\hspace{0.1em} (V\cap \Omega).$
\end{cor}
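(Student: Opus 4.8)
\emph{Proof strategy.} Corollary~\ref{corA} is the special case $W=\breve W=\mathbb C^d$ of Theorem~\ref{main}: a complex Euclidean space is Stein, so $(\mathbb C^d,\Omega)$ has Property~(S) with $\iota=\mathrm{id}$, and, as remarked after Theorem~\ref{main}, $\dim_{\mathbb C}(V\cap\Omega)^+=\dim_{\mathbb C}(V\cap\Omega)$ when $(V\cap\Omega)^+\ne\emptyset$ while otherwise $\dim_{\mathbb C}(V\cap\Omega)=0$ and the bound $p>0$ agrees; thus the two statements coincide and it suffices to prove Theorem~\ref{main}. I would do this in three stages: a geometric normalization, a concrete realization of the quotient module, and a microlocal estimate together with an index computation.

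\emph{Geometric normalization.} First I would use Property~(S) to replace $(W,\Omega)$ by its Stein model $(\breve W,\breve\Omega)$: the contraction $\iota$, biholomorphic near $\partial\Omega$ and contracting only finitely many compact fibres, induces an isomorphism $\iota^*$ of the holomorphic function algebras $\mathcal O(\overline{\breve\Omega})\to\mathcal O(\overline\Omega)$ and, up to equivalence of norms, a bounded invertible intertwiner between the quotient modules $M_{\iota(V)}^\bot$ and $M_V^\bot$. Since $\breve\Omega$ is Stein, the compact positive-dimensional irreducible components of $V\cap\Omega$ are exactly those $\iota$ contracts, so $\iota(V)\cap\breve\Omega$ has no such component and $\dim_{\mathbb C}(\iota(V)\cap\breve\Omega)^+=\dim_{\mathbb C}(V\cap\Omega)^+=:n$. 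Next I would apply an embedded desingularization (Hironaka) of $\iota(V)$: as $\text{Sing}(\iota(V))$ is compact and interior, one obtains a proper modification biholomorphic near the boundary after which the subvariety is smooth and transverse to the boundary; being concentrated on a compact interior set, it alters the Bergman--Sobolev module, each antisymmetric sum \eqref{come} and its trace only by operators that are smoothing near the exceptional loci. The word ``finite'' and Property~(S) enter precisely here.

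\emph{Realization of the quotient module.} For the resulting smooth transverse subvariety, let $R$ be the restriction map onto the Bergman--Sobolev space of the subvariety intersected with the domain; then $\ker R$ is the relevant submodule, $R$ intertwines $M_f$ with multiplication by $f$ restricted to the subvariety, and the key point is that $R$ is bounded, has closed range equal to that Bergman--Sobolev space and is bounded below on the orthocomplement, so that the quotient module is carried, by a bounded invertible intertwiner, onto the Bergman--Sobolev module of a smooth relatively compact strongly pseudoconvex domain $N$ of complex dimension $n$, with $S_f$ going to the Toeplitz operator $T_{f|_N}$; compact irreducible components of the intersection contribute only a finite-dimensional summand on which every $S_f$ is scalar and all antisymmetric sums vanish, and may be discarded. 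Surjectivity of $R$ with norm control is the analytic heart, which I would deduce from the strong pseudoconvexity, the transversality $\text{Reg}(V)\pitchfork\partial\Omega$, and an Ohsawa--Takegoshi-type $L^2$-extension theorem on a Hermitian manifold adapted to the Sobolev scale.

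\emph{Microlocal estimate and index vanishing.} On $N$ I would work in the generalized Toeplitz (Boutet de Monvel--Guillemin) calculus attached to the compact strongly pseudoconvex CR boundary $\partial N$: each $M_f$ is a generalized Toeplitz operator of order $0$, a commutator $[M_f,M_g^*]$ has order $-1$, and the length-$l$ sum \eqref{come} lies in the $l$-th term of the commutator filtration, hence has order $\le-l$; an antisymmetrization identity of Amitsur--Levitzki type forces its order-$(-l)$ symbol to vanish for $l\ge2$, so \eqref{come} has order $-(l+1)$ for $l\ge2$ and order $-1$ for $l=1$. The generalized Szeg\H{o} projection on the $(2n-1)$-dimensional contact manifold $\partial N$ obeys a Weyl law with exponent $n$, so a generalized Toeplitz operator of order $-m$ lies in $\mathcal L^p$ whenever $mp>n$; this yields all the stated Schatten bounds, and in particular \eqref{come} is trace class once $l\ge n$. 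The trace-vanishing for $l>n$ I would obtain from an analytic Grothendieck--Riemann--Roch/index formula of Douglas--Tang--Yu type: the trace of \eqref{come} equals, up to a universal constant, the integral over $N$ of a curvature form of degree $2l$, which vanishes identically once $2l>2n=\dim_{\mathbb R}N$. The step I expect to be the main obstacle is the realization --- the norm-controlled $L^2$-extension from a transverse subvariety of a strongly pseudoconvex domain in a Hermitian manifold and the faithful transport of the truncated Toeplitz calculus through it --- together with checking that the normalization (the Property-(S) contraction and the embedded desingularization) genuinely preserves the Schatten classes of all antisymmetric sums, including at the critical length where the trace statements are sharp.
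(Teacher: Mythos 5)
Your first step --- reducing the Corollary to Theorem~\ref{main} by taking $W=\mathbb C^d$ with $\iota=\mathrm{id}$ and noting that over a Stein $W$ one has $\dim_{\mathbb C}(V\cap\Omega)^+=\dim_{\mathbb C}(V\cap\Omega)$ unless the noncompact part is empty --- is exactly how the paper obtains the Corollary. The substance therefore lies in your sketch of Theorem~\ref{main}, whose broad outline (Hironaka desingularization to reach the smooth transverse case, then the Boutet de Monvel--Guillemin calculus and the Weyl law $\mathrm{ord}\le -m\Rightarrow\mathcal L^p$ for $p>n/m$) does match the paper. But two of your steps have genuine gaps.

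First, in the ``realization'' step you transport $S_f$ to the Toeplitz operator $T_{f|_N}$ by a \emph{bounded invertible} intertwiner. Similarity does not commute with taking adjoints, so this says nothing about $[S_f,S_g^*]$ or any antisymmetric sum involving $S_g^*$; indeed the paper explicitly warns (citing Arveson) that $p$-essential normality is destroyed by passing to an equivalent norm. The paper instead never identifies $M_V^\bot$ with a Bergman module on a lower-dimensional domain: it proves the exact projection formula $\langle P_{M_V^\bot}u,v\rangle=\langle G^{-1}Tu,Tv\rangle$ with $G=\mathrm{diag}(\tau\tau^*,I)$, replaces $G^{-1}$ by a parametrix $\tilde T$ of $D_T=\mathrm{diag}(T_iT_i^*)$ modulo $\bigcap_{p>0}\mathcal L^p$, and only then reads off orders; the adjoint is carried through this formula, not through a similarity. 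Relatedly, your appeal to an Ohsawa--Takegoshi extension would give surjectivity of $R$ but not the microlocal fact actually needed, namely that $(R_{\partial V_i}\varPi)T(R_{\partial V_i}\varPi)^*$ is an elliptic generalized Toeplitz operator of shifted order (Lemma~\ref{eTr}), which is what makes $D_T$ invertible modulo smoothing. Second, you assert that the desingularization changes each antisymmetric sum ``only by smoothing operators.'' Smoothing operators are trace class with nonzero trace in general, so this would not transfer the trace-vanishing statement for $l>\dim_{\mathbb C}(V\cap\Omega)$; the paper needs, and proves, that a permissible blowing-up is a \emph{unitary} for the canonical norms (Proposition~\ref{blh}), whence the exact conjugation identity~(\ref{pisfs}) for the antisymmetric sums. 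Finally, the trace vanishing is obtained in the paper not from a Grothendieck--Riemann--Roch curvature integral (which is not available at this generality and would be close to circular) but from the elementary Helton--Howe identity $\mathrm{Tr}[A,B]=0$ for $A$ bounded and $B\in\mathcal L^1$, applied to the combinatorial expansion~(\ref{ssfgs}).
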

 
Note that the Bergman-Sobolev space $\mathcal{O}^s(\overline{\Omega})$ with canonical norm is unitary equivalent to the Hardy space $\mathcal{O}^{-{1}/{2}+s}(\partial\Omega) $ for every $s\in\mathbb{R},$ and  $\mathcal{O}^{{d}/{2}}(\overline{\mathbb{B}^d}) $ with canonical norm coincides with the Drury-Arveson space, see Lemma \ref{phda}. 
 Consequently, Corollary \ref{corA} (1)  extends many previously known results in \cite{DGW22,EnE15,GuWk08,WaX19} on geometric Arveson-Douglas Conjecture. Moreover, Corollary \ref{corA} (2) resolves an open problem concerning the trace-class antisymmetric sum of truncated Toeplitz operators in \cite[Problem 12.3]{WaX23} recently proposed by Wang and Xia; our result is more general and sharp. We also give immediate applications of the main theorem in the  $K$-homology theory (see Proposition \ref{Kth}) and geometric invariant theory (see Proposition \ref{ginv}), which can be viewed as two specific examples of Atiyah-Singer index theorem in the multioperator theory. Notably, Proposition \ref{ginv} extends \cite[Proposition 7.10]{Arv00} and gives a positive answer to \cite[Problem D]{Arv07} for the compression of the $d$-shift of rank $1.$ 

 The main technical tools we used are the theory of generalized Toeplitz operators developed by Boutet de Monvel and Guillemin \cite{Bou79, BG81} and  Hironaka's embedded desingularization for analytic subvarieties \cite{AHV18, Hir64}. A key ingredient of the proof is the fact that the  $p$-essential normality of Bergman-Sobolev quotient submodules with canonical norms is preserved invariant under permissible blowing-ups, which can be regarded as an extension of the biholomorphic invariance of $p$-essential normality of quotient submodules \cite{Din24}.  The techniques developed here seem to have wide applicability and may shed new light on the geometric Arveson-Douglas Conjecture in the more general context and other related problems, for instance, the index theorem in the multioperator theory.  

The paper is organized as follows. In Section 2, we review some of the standard facts on complex algebraic geometry and the theory of generalized Toeplitz operators.  In Section 3, we study the properties of restriction operators.  Section 4 is devoted to proving the main theorem. Section 5 gives applications in the  $K$-homology theory and geometric invariant theory of the commuting multioperator. 

 \section{Preliminaries}
   \subsection{Analytic subvarieties and blowing-ups} All manifolds are assumed to be smooth, connected, Hausdorff,
and second countable.  Let $X$ be a complex manifold, a closed subset $A\subset X$ is called an analytic subvariety, if it is locally the common zeros of finitely many holomorphic functions. A point $x\in A$ is called smooth or regular, if there exists an open neighborhood $U\ni x$ such that $A\cap U$ is a closed complex submanifold of $U.$ Denoted $\text{Reg}\hspace{0.1em} (A)$ by the collections of all smooth points of $A,$ which is usually called the regular part of the subvariety $A.$  A point $x\in A$ is called singular, if $x\notin \text{Reg}\hspace{0.1em} (A),$ and the complement of the regular part denoted by $\text{Sing}\hspace{0.1em} (A)=A\setminus \text{Reg}\hspace{0.1em} (A) $ is usually called singularities or singular locus of $A.$ An analytic subvariety is called smooth if all points are smooth, or equivalently its singular locus is empty. It can be shown that $\text{Sing}\hspace{0.1em} (A)$ is also an analytic subvariety in $X$ and $\text{Sing}\hspace{0.1em} (A)$ is a thin (i.e., nowhere dense) subset in $A.$ Thus we can define the dimension of the analytic subvariety at a point $x$ by the dimension of complex manifolds \cite{Dem09, GR84}. The dimension of the empty set is conventionally defined to be $-1.$  We assume that the subvariety is always nonempty unless explicitly stated otherwise.  

Blowing-ups are a useful tool in complex and algebraic geometry,  especially in the theory of desingularization or resolution of singularities \cite{AHV18, Hir64}  over the complex number field $\mathbb{C},$ and we refer the reader to \cite{Kol07} for plentiful concrete examples of desingularization. Blowing-ups can be explicitly constructed in the local case and then extended globally via a gluing lemma \cite[Lemma 3.22]{Voi07}. Due to blowing-ups satisfying the universal property, we give the definition under the more general setting of complex space in the following theorem which is proven in \cite[Chapter 4.1]{Fis76}. Roughly speaking, the complex space is locally an analytic subvariety in an open set of a complex Euclidean space.

\begin{thm}\cite{Fis76} (Definition of blowing-up)    Let $X$ be a complex space with a closed complex subspace $Z\subset X$. Then there exists a holomorphic map
$\eta:\check{X} \rightarrow X$
with the following properties:
\begin{enumerate}
\item[(a)] $\eta$ is proper.
\item[(b)] $\eta^{-1}(Z) \subset \check{X} $ is a divisor (or hypersurface) with only normal crossing.
\item[(c)] $\eta$ is universal with respect to (b), i.e., if there is any holomorphic
map $\sigma:\tilde{X} \rightarrow X$ such that $\sigma^{-1}(Z)\subset \tilde{X}$ is a divisor then there is a unique holomorphic map $\phi: \tilde{X}\rightarrow \check{X}$ such that the diagram

\begin{displaymath} \xymatrix@R+1.3em@C+1.3em{\tilde{X} \ar[r]^<(.35){\phi}\ar[dr]_<(.35){\sigma}&\check{X}\ar[d]^{\eta}\\
&X}
\end{displaymath}

commutes.
\item[(d)] The restriction of $\eta: \check{X} \setminus \eta^{-1}(Z)\rightarrow X\setminus Z$
is biholomorphic.
\item[(e)]  If $X$ is a complex manifold and $Z$ is a complex submanifold, then $\check{X}$ is a complex manifold.
\end{enumerate}
$\eta:\check{X} \rightarrow X$ is called a blowing-up  (or monoidal transformation)
along center $Z,$ and $\eta^{-1}(Z) $ is called the exceptional divisor.
\end{thm}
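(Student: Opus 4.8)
The plan is to reduce everything to an explicit local model and then glue. Since $X$ is a complex space, $Z$ is cut out by a coherent ideal sheaf $\mathcal{I}=\mathcal{I}_Z\subset\mathcal{O}_X$. On a small open set $U\subset X$ I would choose finitely many generators $f_0,\dots,f_k$ of $\mathcal{I}|_U$ and set $\check{U}$ equal to the analytic $\mathrm{Proj}$ of the Rees algebra $\bigoplus_{n\ge 0}\mathcal{I}^n|_U$; concretely $\check{U}$ is the closed complex subspace of $U\times\mathbb{P}^k$ cut out by the relations $f_i\xi_j=f_j\xi_i$, equipped with the (possibly non-reduced) structure coming from the Rees algebra, which set-theoretically is the closure of the graph of the meromorphic map $x\mapsto[f_0(x):\dots:f_k(x)]$. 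Let $\eta:\check{U}\to U$ be the first projection. The first step is to verify that this construction is independent of the choice of generators up to a canonical isomorphism over $U$ — another generating set induces a linear-fractional identification of the ambient projective spaces compatible with the Rees-algebra description — so that the local pieces patch along overlaps by the gluing lemma \cite[Lemma 3.22]{Voi07} into a global holomorphic map $\eta:\check{X}\to X$.

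Next I would read off properties (a), (b), (d), (e) directly from the local model. For (a), $\eta$ is the restriction to a closed subspace of the projection $U\times\mathbb{P}^k\to U$, hence proper because $\mathbb{P}^k$ is compact. For (d), over $U\setminus Z$ the $f_i$ have no common zero, so $x\mapsto[f_0(x):\dots:f_k(x)]$ is a genuine holomorphic map and $\check{U}\cap\bigl((U\setminus Z)\times\mathbb{P}^k\bigr)$ is exactly its graph, which projects biholomorphically onto $U\setminus Z$. For (b), in the chart $\{\xi_i\neq0\}$ with $t_j=\xi_j/\xi_i$ the relations force $f_j=f_it_j$, so the pullback ideal $\eta^{*}\mathcal{I}\cdot\mathcal{O}_{\check{X}}$ is generated by the single function $f_i$, a non-zero-divisor, whence $\eta^{-1}(Z)$ is an effective Cartier divisor; in the smooth situation this generator is part of a coordinate system, giving the normal-crossing (indeed smooth-divisor) assertion. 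For (e), if locally $Z=\{z_1=\dots=z_k=0\}$ in a manifold $X$ with coordinates $z_1,\dots,z_n$, then in the chart $\{\xi_1\neq0\}$ the relations $z_i\xi_j=z_j\xi_i$ solve as $z_j=z_1t_j$ for $2\le j\le k$, so that $(z_1,t_2,\dots,t_k,z_{k+1},\dots,z_n)$ is a holomorphic coordinate system on $\check{X}$ near that chart, proving $\check{X}$ is a manifold.

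The core of the argument is the universal property (c). Let $\sigma:\tilde{X}\to X$ be holomorphic with $\sigma^{-1}(Z)\subset\tilde{X}$ a divisor, i.e. the pullback ideal $\mathcal{J}=\sigma^{*}\mathcal{I}\cdot\mathcal{O}_{\tilde{X}}$ is locally generated by a single non-zero-divisor. Working locally, write $\mathcal{J}=(g)$ and $\sigma^{*}f_i=g\,h_i$; from $g\in(\sigma^{*}f_0,\dots,\sigma^{*}f_k)=(gh_0,\dots,gh_k)$ and $g$ a non-zero-divisor one gets $1=\sum_i a_ih_i$, so the $h_i$ have no common zero and define $[h_0:\dots:h_k]:\tilde{X}\to\mathbb{P}^k$, independent of the choice of $g$ (unique up to a unit). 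Put $\phi=(\sigma,[h_0:\dots:h_k]):\tilde{X}\to X\times\mathbb{P}^k$; it satisfies the relations $(\sigma^{*}f_i)h_j=(\sigma^{*}f_j)h_i$, and over $\tilde{X}\setminus\sigma^{-1}(Z)$ it coincides with $\sigma$ followed by the graph identification of (d), so $\phi$ maps into $\check{X}$ with $\eta\circ\phi=\sigma$ — here one uses the analytic $\mathrm{Proj}$ description, rather than merely the set-theoretic incidence variety, to see that $\phi$ respects the complex-space structure. Compatibility of these local formulas under the generator changes above yields a global $\phi$, and for uniqueness any competitor must agree with $\phi$ on the dense open set $\tilde{X}\setminus\sigma^{-1}(Z)$ — forced by (d) and $\eta\circ\phi=\sigma$ — hence everywhere, since $\check{X}$ is Hausdorff and this complement is dense (indeed carries all of $\mathcal{O}_{\tilde{X}}$, as $g$ is a non-zero-divisor).

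I expect the main obstacle to be the bookkeeping in (c) in the singular, possibly non-reduced setting: pinning down the precise meaning of ``$\sigma^{-1}(Z)$ is a divisor'' — namely that the local generator of the pullback ideal is a non-zero-divisor — and checking that $\phi$ genuinely lands in the Rees-algebra model $\check{X}$ rather than in some spurious component of the incidence variety $\{f_i\xi_j=f_j\xi_i\}$. This is exactly where one must work with the intrinsic analytic $\mathrm{Proj}$ instead of the naive closure-of-graph picture, and where the detailed treatment of \cite[Chapter 4.1]{Fis76} carries the real weight; by contrast, the gluing step is routine once the local construction has been shown to be functorial in the chosen generators.
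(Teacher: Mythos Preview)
The paper does not give a proof of this theorem: it is quoted from \cite[Chapter~4.1]{Fis76} and used as a black box (the paper even introduces it with ``the following theorem which is proven in \cite[Chapter 4.1]{Fis76}''). Your outline is the standard construction carried out there --- local $\mathrm{Proj}$ of the Rees algebra $\bigoplus_{n\ge 0}\mathcal{I}^n$, functoriality in the chosen generators, gluing, and then reading off (a), (d), (e) and the universal property (c) from the local model --- so there is nothing to compare against in this paper and your plan matches the cited source.

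One small caution on (b): for an arbitrary complex space $X$ and closed subspace $Z$ the blow-up makes $\eta^{-1}(Z)$ an effective Cartier divisor (locally principal, generated by a non-zero-divisor), but \emph{not} in general a normal crossing divisor; the normal-crossing clause really only has content in the situation of (e), as you correctly note. Also, your density argument for uniqueness in (c) is fine when $\tilde{X}$ is reduced, but in the non-reduced case you need the sheaf-theoretic version (two morphisms to a separated target that agree after pullback along a schematically dense open immersion coincide), which is exactly the ``bookkeeping'' you flag and which Fischer handles via the $\mathrm{Proj}$ formalism.
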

The universal property (c) ensures that the blowing-up along a fixed center is unique up to biholomorphism. In general, a holomorphic map satisfies (a) and (d) is called a proper modification if $Z$ is chosen to be minimal. The following example gives the explicit construction of bowing-ups in the local case.
  \begin{exm} Let $X=\mathbb{C}^d,$ and $Z=\mathbb{C}^k\subset \mathbb{C}^d$ be a subspace with  $0\leq k\leq d-1.$ Denote $$\text{Bl}_{\mathbb{C}^k}(\mathbb{C}^d)=\{ (z,[w])\in \mathbb{C}^d\times \mathbb{P}^{\hspace{0.1em} d-k-1} :z_iw_j=z_jw_i, 1\leq i,j\leq d-k-1\},$$ where $\mathbb{P}^{\hspace{0.1em}l}$ denoted by $l$-dimensional  complex projective space. Then the projection $$\text{Pr}_1:\text{Bl}_{\mathbb{C}^k}(\mathbb{C}^d)\rightarrow \mathbb{C}^d,\quad (z,[w])\longmapsto z $$ is a blowing-up along the centre $\mathbb{C}^k.$ Clearly, when $k=d-1,$  the blowing-up of $\mathbb{C}^{d}$ along the centre $\mathbb{C}^{d-1}$ is the identity map $\text{Id}: \mathbb{C}^{d}\rightarrow \mathbb{C}^{d}.$ 
  \end{exm}
  
    \subsection{Bergman-Sobolev spaces}  For convenience, we first recall the definition of Sobolev spaces on a compact Riemannian manifold $(M,w_g)$ of dimension $d$ without boundary. The term $w_g$ is a $2$-form  determined uniquely by a complete Riemannian metric $g$ on $M$ and vice versa. Let $\mu$ be a (strictly) positive smooth $1$-density (viewed as a measure) on $M.$ The Sobolev space  $H^s(M,\mu)$  is defined as the completion of the space $C^\infty(M)$ with respect to the norm  $\Vert\cdot\Vert_s,$
    $$ \Vert f\Vert_s^2= \int_{M}\vert(\text{Id}-\Delta_g)^{\frac{s}{2}}f\vert^2\mu$$
   where $\Delta_g$ is the Laplace-Beltrami operator with respect to the metric $g.$  It is evident that  $H^0(M,\mu)= L^2(M,\mu).$  The compactness of $M$ implies that the definition of $H^s(M,\mu)$ is independent of the choice of $\mu,$  more precisely, these spaces are semi-equivalent. Two  Banach spaces $H_1, H_2$ are said to be semi-equivalent denoted by $ H_1\simeq H_2$ if they are equal as set and their norms are equivalent, this is nothing but a reformulation of the norm equivalence theorem. We will denote the Sobolev space $H^s(M,\mu)$ by $H^s(M)$ if $\mu$ is unambiguous in the context. 

Now we turn to the case where $\overline{\Omega}$ is a compact complex manifold with smooth strongly pseudoconvex boundary $\partial\Omega$ in a larger complex manifold $W.$  We can assume that  $\rho\in C^\infty(W)$ is a real-valued defining function of $\Omega$ such that
\begin{equation}\label{bdry} \overline{\Omega}=\{\rho\leq0\}, \quad\partial \Omega=\{\rho=0\},\quad \textbf{d}\rho|_{\partial  \Omega}\neq0\notag\\ \end{equation}
 and the Levi form $\mathcal{L}(\rho)$ is strictly positive at each point of $\partial  \Omega,$ the compactness of $\partial  \Omega$ implies that $\rho$ can be chosen  strongly plurisubharmonic on a neighborhood of $\partial \Omega.$  Let $\nu$ be the restriction to $\partial{\Omega}$ of the 1-form $\frac{1}{2\pi}\cdot\text{Im} (\partial\rho)=(\partial\rho-\bar{\partial}\rho)/(4\pi\sqrt{-1})$, the strict pseudoconvexity of $\partial\Omega$ implies that the $(2d-1)$-form $\lambda=\nu\land(\textbf{d} \nu)^{d-1}$ is positive and nowhere nonvanishing  on $\partial\Omega.$ Thus $\lambda$ is a smooth positive $1$-density.  We define $H^s(\partial{\Omega})$ as the Sobolev space $H^s(\partial{\Omega},\nu)$ for $s\in\mathbb{R}.$ We can choose an $\varepsilon>0$ small enough such that $\Omega_\varepsilon=\{\rho<\varepsilon\}\Subset W$ is smooth strongly pseudoconvex.   In what follows,  we take $M_\varepsilon=\overline{\Omega}_\varepsilon\cup_{\partial\Omega_\varepsilon}\overline{\Omega}_\varepsilon$ to be the double manifold of $\overline{\Omega}_\varepsilon, $ which contains $\Omega$ and is a compact real $2d$-dimensional  Riemannian manifold  without  boundary. There exists a Riemiannian metric $h_\varepsilon$ on $M_\varepsilon$ satisfying ${h}_\varepsilon|_\Omega=h|_\Omega.$ Note that $\overline{\Omega}\subsetneq\overline{\Omega}_\varepsilon \subsetneq M_\varepsilon,$ we denote the Sobolev space $H^s(\overline{\Omega})$ by $$H^s(\overline{\Omega}):=\{f|_{\overline{\Omega}}:f\in H^s(M_\varepsilon)\}, $$ where $f_{\overline{\Omega}}$ is the restriction of $f$ on $\overline{\Omega},$ the Sobolev norm $\Vert\cdot\Vert_{s,\overline{\Omega}}$ on $H^s(\overline{\Omega})$ is given by $$\Vert g\Vert_{s,\overline{\Omega}}=\inf \{ \Vert f\Vert_s: f|_{\overline{\Omega}}=g,f\in H^s(M_\varepsilon,w_{{h}_\varepsilon}^d)\}$$ for every $g\in H^s(\overline{\Omega}).$ It can be checked that $H^0(\overline{\Omega})=L^2(\Omega,w^d_{h_\varepsilon})=L^2(\Omega,w^d_{h}).$

Let $\textbf{d}$ be the exterior differential operator on the smooth sections (or forms) of complexified exterior algebra $\bigwedge{(T^\ast(W)\otimes \mathbb{C})}$ which can be decomposed into the sum of two differential operators $\partial:\mathcal{A}^{i,j}(W)\rightarrow\mathcal{A}^{i+1,j}(W)$ and $\bar{\partial}:\mathcal{A}^{i,j}(W)\rightarrow\mathcal{A}^{i,j+1}(W),0\leq i,j\leq d,$ namely $\textbf{d}=\partial+\bar{\partial}.$ The  Hermitian metric $w_h$ naturely induces  an inner product on each $\mathcal{A}^{i,j}(W),$ its completion is denoted by $L^2_{(i,j)}(W);$ for example, the inner product on $\mathcal{A}^{1,0}(W)$ is given by $$<f,t>_{}=\int_{W}<f,t>_{h,x}w_h^d=\sum_{l=1}^\infty\int_{U_l} \sum_{i,j}h^{i,\bar{j}}f_{U_l,i}\bar{t}_{U_l,j}\theta_lw^d_h,$$ where $(h^{i,\bar{j}})$ is the inverse matrix of $h$ and  $f=\sum_{i=1}^df_{U_l,i}dz_i,t=\sum_{i=1}^dt_{U_l,i}dz_i$ are local representations of  sections of $f,t\in\mathcal{A}^{1,0}(W),$ respectively.  Clearly, $L^2_{(0,0)}(W)=L^2(W).$ There exist operators $\star: \mathcal{A}^{i,j}(W)\rightarrow\mathcal{A}^{d-j,d-i}(W) $ called Hodge star operators such that $$<f,t>_{h,x}w_h^d=f\wedge\star\hspace{0.1em} \bar{t}$$ at each $x\in W.$ Locally, in the chart $(U,z),$ the Hodge star operator can be exactly given by \begin{equation}\label{Hog}\star f=\sqrt{\text{det}(h_{i,\bar{j}})}f_{L,\bar{K}}h^{(I^c\bar{L}^c)(\bar{K}L)}dz^{I\bar{J}}\end{equation}  for $f\in \mathcal{A}^{i,j}(W),$ where we temporarily adopt  notations in \cite{Koh63}.
Thus we have $$ <f,t>_{}=\int_Wf\wedge\star\hspace{0.1em} \bar{t}, $$ for all $f,t\in \mathcal{A}^{i,j}(W).$
Differential operators
$\partial,\bar{\partial}$ and Hodge star operators $\star$ can be uniquely extended to densely closed operators on $L^2_{(i,j)}(W),$ we denoted them also by $\partial,\bar{\partial}, \star $ respectively. 
 The dual operator of $\bar{\partial}$ under the above inner products is denoted by $\bar{\partial}^\ast.$ It can be checked that the following identities hold:
 \begin{equation}\begin{split}\label{phog}
\bar{\partial}^\ast&=-\star \partial\hspace{0.1em} \star. \\
\end{split}
 \end{equation}
 Let  $D_{\bar{\partial}}=-\bar{\partial}^\ast \bar{\partial}: L^2_{}(W)\rightarrow L^2_{}(W),$  then it can be checked   in the local chart $(U,z)$ by (\ref{Hog}) and (\ref{phog}) that $D_{\bar{\partial}}$ is a second order differential operator and its leading term is $$\sum_{i,j=1}^d h^{i,\bar{j}}\frac{\partial}{\partial {z}_i}\frac{\partial}{\partial \bar{z}_j},$$
which implies that $D_{\bar{\partial}}$ is   a second order  elliptic differential operator, in fact $$-D_{\bar{\partial}}=\Delta_{\bar{\partial}}$$  in the current case, where  $\Delta_{\bar{\partial}}=\bar{\partial}^\ast \bar{\partial}+\bar{\partial}\bar{\partial}^\ast $ is the  $\bar{\partial}$-Laplace-Beltrami operator. Usually, a function annihilated by $\Delta_{\bar{\partial}}$ is called harmonic.  

Now we consider the following elliptic boundary value problem of Dirichlet type:
\begin{equation}\label{ebva}
 \begin{cases}
D_{\bar{\partial}}f=0, \hspace{0.5em} \text{on} \hspace{0.3em} \Omega \\ 
f|_{\partial\Omega}=\phi, \hspace{0.4em} \phi\in C^\infty(\partial\Omega).
\end{cases}
 \end{equation}
 The solvability of the elliptic boundary value problem (\ref{ebva}) is guaranteed by \cite[Theorem VI. 1 (ii)]{See69} and the uniqueness is derived from the  Green-Stokes formula and the generalized idempotent property of Hodge star operators. Combined with  \cite[Theorem 5.2]{Win23} or \cite[Theorem VI. 1 (iii)]{See69}, it follows that there is a linear operator $K: C^\infty(\partial\Omega)\rightarrow C^\infty(\overline{\Omega})$  called Poisson extension operator that can be extended to a bijective continuous linear operator  $K: H^{-{1}/{2}+s}(\partial\Omega)\rightarrow \mathcal{N}(D_{\bar{\partial}},s) $ for each $s\in \mathbb{R},$  where $\mathcal{N}(D_{\bar{\partial}},s):=\{f\in H^s(\overline{\Omega}):  D_{\bar{\partial}}f=0 \hspace{0.3em}\text{on} \hspace{0.3em} \Omega\}$ which is a closed subspace of $H^s(\overline{\Omega})$ and is called harmonic-Sobolev space on $\overline{\Omega}.$ This implies that the definition of $\mathcal{N}(D_{\bar{\partial}},s)$ is independent of the choice of $\varepsilon,$  namely these spaces are semi-equivalent for different $\varepsilon.$
 By the Banach inverse mapping  theorem,  there exists a bijective continuous linear operator $\gamma:\mathcal{N}(D_{\bar{\partial}},s)\rightarrow H^{s-{1}/{2}}(\partial\Omega)$ such that \begin{equation} \label{krps} K\gamma=\text{Id}_{\mathcal{N}(D_{\bar{\partial}},s)}, \quad\gamma K=\text{Id}_{H^{s-{1}/{2}}(\partial\Omega)},\end{equation}  for each $s\in \mathbb{R},$ and $\gamma$ is the so-called 
  Sobolev trace operator. Thus the Poisson operator $K:  H^{s-{1}/{2}}(\partial\Omega) \rightarrow\mathcal{N}(D_{\bar{\partial}},s)$ will be a unitary operator if we chose suitable norms of $\mathcal{N}(D_{\bar{\partial}},s), H^{s-{1}/{2}}(\partial\Omega) $ for $s\in \mathbb{R},$ this will play a crucial role in our proof of the main theorem.  
 Note that $\mathcal{O}(\overline{\Omega})\subset\mathcal{N}(D_{\bar{\partial}},s)$ for each $s\in \mathbb{R},$ we define  $$\mathcal{O}^s(\overline{\Omega}):=\overline{\mathcal{O}(\overline{\Omega})\cap H^{s}(\overline{\Omega})}$$  to be  the  completion of $\mathcal{O}(\overline{\Omega})\cap H^{s}(\overline{\Omega})$ in  $ H^{s}(\overline{\Omega}),$ which is a closed subspace of $H^s(\overline{\Omega})$ and is called Bergman-Sobolev space on $\overline{\Omega}.$ Its image $\gamma(\mathcal{O}^{s+{1}/{2}}(\overline{\Omega})) $ under $\gamma$ is denoted by $$\mathcal{O}^s(\partial{\Omega}):=\gamma(\mathcal{O}^{s+{1}/{2}}(\overline{\Omega})) $$ which is called Hardy-Sobolev space on the boundary $\partial{\Omega}$ for each $s\in \mathbb{R}.$ According to  \cite{KoR65}, we know that a function $f\in C^\infty(\partial{\Omega})\cap H^s(\partial\Omega)$ belongs to $\mathcal{O}^s(\partial{\Omega})$ if and only if  $f$ satisfies the so-called tangental Cauchy-Riemann equations. Thus $\mathcal{O}^0(\partial{\Omega})$ is the usual Hardy space on $\partial{\Omega}.$ 
  
 \subsection{Generalized Toeplitz operators}  We first recall the definition of pseudodifferential operators on manifolds, however, we have to give the definition of pseudodifferential operators on an open set $U$ in the real Euclidean space $\mathbb{R}^d.$ Let $\mathscr{E}^\prime(U),\mathscr{D}^\prime(U)$ be the dual spaces of $C^\infty(U),$  the space of smooth functions, and $C^\infty_c(U),$ the space of smooth functions with compact supports, respectively.  A continuous linear operator $A:\mathscr{E}^\prime(U)\rightarrow \mathscr{D}^\prime(U)$ is called a pseudodifferential operator of order $m$ if there exists an amplitude function $a\in S^{m}(U)\subset C^\infty(U\times\mathbb{R}^d)$ such that $ A$ is the unique extension of the continuous operator $\text{Op}\hspace{0.1em} a: C^\infty_c(U)\rightarrow C^\infty(U)$ defined by \begin{equation}\label{pesd} (\text{Op}\hspace{0.1em}a) u(x)=\int_{\mathbb{R}^d} e^{\sqrt{-1}x\cdot \xi}a(x,\xi)\mathcal{F}(u)(\xi)d\xi,\quad x\in U, \end{equation} denoted by $A=\text{Op}\hspace{0.1em}a$ in this case.  Where $\mathcal{F}$ is the Fourier transform defined by \begin{equation}\label{fouri} \mathcal{F}(u)(\xi)=\frac{1}{(2\pi)^{d}}\int_{\mathbb{R}^d} e^{-\sqrt{-1}\xi\cdot y}u(y)dy,\quad u\in L^1(\mathbb{R}^d);\end{equation}
 $S^m(U)$ is  the H\"ormander symbol class space of order $\leq m,$ namely a function $a\in C^\infty(U\times \mathbb{R}^d)$ belongs to $S^m(U)$ if and only if, for any compact subset $Z\subset U$ and  multi-index of nonnegative integers $\alpha,\beta\in \mathbb{N}^d$, there exists a positive constant $C_{\alpha,\beta}(Z)$ such that 
 $$|\partial_x^\alpha\partial_\xi^\beta a(x,\xi)|\leq C_{\alpha,\beta}(Z)(1+|\xi|)^{m-|\beta|}$$ for all  $(x,\xi)\in Z\times\mathbb{R}^d,$ where  $\partial_\xi^{\beta}=(\frac{\partial }{\partial \xi_1})^{\beta_1}\cdots(\frac{\partial }{\partial \xi_d})^{\beta_d}$ and $|\beta|=\beta_1+\cdots+\beta_d$ as usual. Denote by $S^{-\infty}(U)=\cap_{m\in\mathbb{R}}S^m(U).$  Let $\Psi^m(U)$ be the collections of pseudodifferential operator of order $\leq m$ on $U,$  and  $\Psi^{-\infty}(U)=\cap_{m\in\mathbb{R}}\Psi^m(U).$ It can be proved that the formula (\ref{pesd}) introduces a linear bijection from $S^m(U)/S^{-\infty}(U)$ onto $\Psi^m(U)/\Psi^{-\infty}(U)$ for each $m\in \mathbb{R},$ and the equivalence class $[a]$ in $S^m(U)/S^{-\infty}(U)$ is called the formal symbol of  the pseudodifferential operator $A$ and denoted by $\sigma_{+}(A).$ We assume from now on that the pseudodifferential operator is always classical unless explicitly stated otherwise, namely for $a\in \sigma_{+}(A),$ which has an asymptotic expansion \begin{equation}\label{asym} a(x,\xi)\sim\sum_{j=0}^{+\infty}a_{j}(x,\xi) \end{equation}
 such that $a(x,\xi)-\sum_{j=0}^{N}a_{j}(x,\xi)\in S^{m-N}(U)$ for each $N\in\mathbb{N},$  where each $a_{j}(x,\xi)\in C^\infty(U\times\mathbb{R}^d)$ and is positive homogeneous of degree $m-j$ with respect to  $\xi$ for $\xi\neq\bm{0}.$
 It can be checked that the asymptotic expansion (\ref{asym}) is unique and is independent of the choice of $a.$ Thus we can define $\sum_{j=0}^{+\infty}a_{j}(x,\xi)$ to be the symbol of the (classical) pseudodifferential operator $A,$  and the first term $a_0(x,\xi)$ (or more generally, the equivalence class $[a] $ in $S^m(U)/S^{m-1}(U)$) is called the principal symbol and denoted by $\sigma(A).$ A  pseudodifferential operator is called elliptic if its principal symbol is nonvanishing on $U\times(\mathbb{R}^d\setminus{\bm{0}}).$ Suppose that $U^\prime \subset \mathbb{R}^d$ is an open set satisfying $\phi:U\rightarrow U^\prime$ is a diffeomoorphism, it introduces an isomorphism $\phi_\ast: \mathscr{D}^\prime(U)\rightarrow \mathscr{D}^\prime(U^\prime), $ especially $$\phi_\ast(f)(x) =f(\phi^{-1}(x))|\text{det }\phi^{-1}(x)|$$ for any locally integral function $f$ on $U.$ It is remarkable that  the pseudodifferential operator is invariant under 
the  diffeomorphism, namely, $A\in \Psi^m(U)$ if and only if its  transition $A^\phi \in \Psi^m(U^\prime),$ where $A^\phi = \phi_\ast\circ A\circ (\phi_\ast)^{-1}.$ 

Due to transition invariance, one can define the pseudodifferential operator on manifolds. Let $M$ be an arbitrary manifold of dimension $d.$ Since we have assumed that $M$ is smooth, connected, Hausdorff,
and second countable, it follows that $C^\infty(M)$ is a Fr\'echet space, and $C^\infty_c(M)$ is a  pre-Fr\'echet space and $C^\infty_c(M)$ is dense in  $C^\infty(M).$ The dual space of $C^\infty_c(M)$ is denoted by $\mathscr{D}^\prime(M)$ that is the space of distributions (identified with currents of degree $d$), and the dual space of $C^\infty(M)$ is denoted by $\mathscr{E}^\prime(M).$ A continuous linear operator $A:\mathscr{E}^\prime(M)\rightarrow \mathscr{D}^\prime(M)$ is called a pseudodifferential operator of order $m,$ if for every local chart $(U,\phi,x)$ of $M,$ then the compose map $A^\phi_U= ((\phi^{-1})_\ast)^{-1}\circ r_U\circ A \circ i_U\circ (\phi^{-1})_\ast$  is a  pseudodifferential operator of order $m$ on $\phi(U)\subset \mathbb{R}^d,$ where $i_u$ is the zero extension map and $r_U$ is the natural restriction map, see the following commutative diagram:

\begin{displaymath} \xymatrix@C+1.98em@R+.6em{ \mathscr{E}^\prime(\phi(U))\ar[r]^{(\phi^{-1})_\ast} \ar[d]_{A^\phi_U}&\mathscr{E}^\prime(U)\ar[r]^{i_U}&\mathscr{E}^\prime(M)\ar[d]^{A}\\\mathscr{D}^\prime(\phi(U)) &C^\infty_c(\phi(U))\ar[l]^{((\phi^{-1})_\ast)^{-1}}   &\mathscr{D}^\prime(M)\ar[l]^<(.27){r_U} .\\}
\end{displaymath}
The concept of principal symbol $\sigma(A)$ for a pseudodifferential operator $ A$ on the manifold $M$ can be globally defined on the cotangent bundle removed its zero section $T^\ast(M)\!\setminus\!\bm{0}.$
 Let $\Psi^m(M)$ be the collections of pseudodifferential operator of order $\leq m$ on $M.$  If $M$ is a compact Riemannian manifold, then for every $A\in \Psi^m(M),$ it can be extended to a continuous linear operator $A:H^s(M)\rightarrow H^{s-m}(M)$ for all $s\in \mathbb{R}.$
 A  pseudodifferential operator $A$ on $M$ is called elliptic if it is locally elliptic, namely its transition $A_U^\phi$  is elliptic for every local chart $(U,\phi,x)$ of $M,$ equivalently its principal symbol $\sigma(A)$  is nonvanishing on $T^\ast(M)\!\setminus\!\bm{0}.$  Pseudodifferential operators are special cases of (classical) Fourier integral operators which are introduced by H\"ormander, and the reader is also referred to \cite{EnE15, Tay81} for definition and more details.
 
 Now we turn to the case that $\overline{\Omega}$ is a compact complex manifold with smooth strongly pseudoconvex boundary $\partial\Omega$ in a larger complex $W,$ i.e., $\partial\Omega$ is a compact Riemannian manifold (without boundary) with the natural volume $\nu$ induced by $\rho$.
Let $\varPi: L^2(\partial{\Omega})\rightarrow \mathcal{O}^0(\partial{\Omega})$ be the  Szeg\"o orthogonal projection.
\begin{defi} Let $Q\in \Psi^m(\partial \Omega)$ for some $m\in\mathbb{R}.$ Then the generalized  Toeplitz operator $T_Q: \mathcal{O}^s(\partial{\Omega})\rightarrow \mathcal{O}^{s-m}(\partial{\Omega})$ is given by $$T_Q= \varPi Q \varPi$$ for any $s\in\mathbb{R}.$
\end{defi}
This is the natural extension of classical Toeplitz operators on the unit circle $\partial \mathbb{B}^1.$  The generalized  Toeplitz operators are all continuous, since the Szeg\"o  orthogonal projection $\varPi: \mathcal{O}^s(\partial{\Omega})\rightarrow \mathcal{O}^{s}(\partial{\Omega})$ is continuous for all $s\in\mathbb{R}.$  The microlocal structure of generalized Toeplitz operators was described by Boutet de Monvel and Guillemin \cite{Bou79, BG81}.

 \section{Properties of restriction operators}\label{ser}
 Global decomposition theorem \cite[Therem II.5.3]{Dem09} states that every analytic subvariety can be uniquely decomposed into the locally finite union of global irreducible components,  and each global irreducible component is an analytic subvariety and is the topological closure of irreducible components of the regular part of the analytic subvariety.  
 Consequently, we can suppose first that the analytic subvariety $V$  of the relatively compact open  neighborhood $\widetilde{\Omega}$ of $\overline{\Omega}$ can be expressed as a union of at most finitely many and mutually different  complex submanifolds  $V_1,\cdots, V_m$  of $\widetilde{\Omega}$ given by
  $$V=\bigcup_{i=1}^m V_i,$$ 
where  $V_i \cap \Omega\neq\emptyset$ for $1\leq i\leq m.$ Indeed, each complex submanifold $V_i$ is a smooth irreducible component for $1\leq i\leq m.$  In this section, we will focus on the case where all smooth irreducible components $V_i$ are noncompact, while the compact case will be addressed in the subsequent section.
  
Denote
 $k_i=d-\text{dim}_{\mathbb{C}}\hspace{0.1em}V_i $ by the codimension of the complex submanifold $V_{i,\Omega}\subset \Omega,$ where $V_{i,\Omega}= V_i\cap \Omega,1\leq i\leq m.$ Let $$ R_{V_{i,\Omega}}: \mathcal{O}^s(\overline\Omega)\rightarrow  \mathcal{O}^{s-{k_i}/2}(\overline{V_{i,\Omega}}),\quad f\longmapsto f|_{\overline{V_{i,\Omega}}}$$ for all $i=1,\cdots,m,$ more generally we define the restriction map $R_V$ between two direct sum spaces as 
$$R_{V_{\Omega}}: \bigoplus_{i=1}^m \mathcal{O}^s(\overline\Omega)\rightarrow \bigoplus_{i=1}^m \mathcal{O}^{s-k_i/2}(\overline{V_{i,\Omega}}),\quad (f_1,\cdots,f_m)\longmapsto (R_{V_{1,\Omega}}f_1,\cdots,R_{V_{m,\Omega}}f_m).$$ Where and below we identify the direct sum and the direct product of finitely many Hilbert spaces with the natural definitions of inner products. 

Recall that a continuous operator between two Hilbert spaces is said to be right semi-Fredholm if its range is closed and has finite codimension and a continuous operator on a Hilbert space is right semi-Fredholm if and only if its equivalence class in Calkin algebra is right invertible by \cite[Proposition XI.2.5]{Coy90}.  Similarly,  a continuous operator between two Hilbert spaces is said to be left semi-Fredholm if its range is closed and its kernel has a finite dimension. Clearly, a continuous operator between two Hilbert spaces is left semi-Fredholm if and only if its adjoint operator is right semi-Fredholm.  Observe that a continuous operator between two Hilbert spaces is Fredholm in the usual sense if and only if it is both right semi-Fredholm and left semi-Fredholm.
\begin{lem} The restriction map $R_{V_\Omega}$ is right semi-Fredholm.
\end{lem}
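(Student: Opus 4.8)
The plan is to show that $R_{V_\Omega}$ has closed range of finite codimension by factoring it through the boundary via the Poisson operator and the Sobolev trace operator, and then invoking the microlocal theory of generalized Toeplitz operators of Boutet de Monvel and Guillemin. First I would transfer everything to the boundary: since $K\colon H^{s-1/2}(\partial\Omega)\to\mathcal N(D_{\bar\partial},s)$ is a unitary (after the canonical choice of norms), and $\mathcal O^s(\overline\Omega)\subset\mathcal N(D_{\bar\partial},s)$ corresponds under $\gamma$ to the Hardy--Sobolev space $\mathcal O^{s-1/2}(\partial\Omega)$, the operator $R_{V_{i,\Omega}}$ is, up to unitary equivalence, the operator $\mathcal O^{s-1/2}(\partial\Omega)\to\mathcal O^{s-1/2-k_i/2}(\partial(V_{i,\Omega}))$ which first extends harmonically/holomorphically into $\overline\Omega$, then restricts to $\overline{V_{i,\Omega}}$, then takes the boundary trace on $\partial(V_{i,\Omega})$ — and $\partial(V_{i,\Omega})=V_i\cap\partial\Omega$ is a compact CR manifold because $V_i$ is smooth on $\partial\Omega$ and meets $\partial\Omega$ transversally. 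The composite can then be written as $\varPi_{V_i}\,A_i\,\varPi$ where $\varPi,\varPi_{V_i}$ are the Szegő projections on $\partial\Omega$ and on $V_i\cap\partial\Omega$ respectively, and $A_i$ is a suitable Fourier integral operator associated to the canonical relation coming from the inclusion $V_i\cap\partial\Omega\hookrightarrow\partial\Omega$.

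The crux is that, because $V_i$ intersects $\partial\Omega$ transversally (and $V_i$ is smooth there), this inclusion is \emph{isotropic} with respect to the contact structures / symplectic cones on the two boundaries: the pullback of the contact form of $\partial\Omega$ to $V_i\cap\partial\Omega$ is the contact form of the latter, and the canonical relation of $A_i$ is the natural one carrying $\Sigma_{\partial\Omega}$ (the symplectic cone on which $\varPi$ is FIO-supported) onto $\Sigma_{V_i\cap\partial\Omega}$. This is precisely the situation in which the theory of Boutet de Monvel--Guillemin yields that the composite $\varPi_{V_i}A_i\varPi$ is a generalized Toeplitz operator between the two Hardy spaces, elliptic of order $k_i/2$ (the shift in the Sobolev exponent being accounted for by the symbol order), with a nonvanishing principal symbol dictated by the normal geometry of the transversal intersection — this is the same mechanism that underlies the restriction estimates used in the Euclidean strongly pseudoconvex case (cf.\ \cite{EnE15}). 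An elliptic generalized Toeplitz operator of order $k_i/2$ between Hardy--Sobolev spaces on a compact contact manifold is Fredholm, in particular right semi-Fredholm. Taking the direct sum over $i=1,\dots,m$, $R_{V_\Omega}$ is a direct sum of Fredholm operators, hence right semi-Fredholm; indeed its cokernel is finite-dimensional because the obstruction to surjectivity localizes to the finite-dimensional kernels of the adjoint elliptic Toeplitz operators.

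The main obstacle I expect is verifying carefully that the restriction-then-trace composite is genuinely a generalized Toeplitz operator of the stated order with nonvanishing principal symbol: one must (i) identify the wavefront set / canonical relation of the "restrict to $V_i$" operator acting on the harmonic extension, checking transversality guarantees it is a homogeneous canonical transformation matching the Szegő cones rather than a more degenerate relation; (ii) pin down the exact order shift $k_i/2$, which is where the codimension enters and where the choice of the canonical norm on $\mathcal O^{s-k_i/2}(\overline{V_{i,\Omega}})$ is essential (a half-order per unit of real codimension, consistent with the trace theorem $H^s\to H^{s-k/2}$ in the transversal case); and (iii) confirm the principal symbol is elliptic, which follows from the nondegeneracy of the Levi form restricted to $V_i\cap\partial\Omega$ together with transversality. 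Once these microlocal facts are in place the Fredholm conclusion is immediate from the elliptic theory of Toeplitz operators, and the statement follows.
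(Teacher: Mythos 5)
Your overall strategy---transferring $R_{V_{i,\Omega}}$ to the boundary via the Poisson and trace operators and invoking the Boutet de Monvel--Guillemin microlocal analysis of the restricted Szeg\"o projection---is precisely the machinery behind the result the paper actually invokes: the paper's proof simply cites \cite{EnE15} (Corollary 3 there) for the right semi-Fredholmness of each individual $R_{V_{i,\Omega}}$, and then does the elementary bookkeeping showing that a finite direct sum of right semi-Fredholm operators is right semi-Fredholm. So you are in effect reproving the cited black box rather than taking a genuinely different route.

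Your reproof, however, contains a genuine error at the decisive step. You assert that the boundary composite $\varPi_{V_i}A_i\varPi$, and hence $R_{V_{i,\Omega}}$ itself, is \emph{Fredholm}. It is not: the kernel of $R_{V_{i,\Omega}}$ is the submodule $M_{V_i}=\{f\in\mathcal{O}^{s}(\overline{\Omega}): f|_{V_i\cap\Omega}=\bm{0}\}$, which is infinite-dimensional whenever $V_i\cap\Omega$ is a proper subvariety of positive codimension, so the operator cannot be left semi-Fredholm. More fundamentally, an elliptic Fourier integral operator between Hardy spaces of contact manifolds of \emph{different} dimensions ($\partial\Omega$ has real dimension $2d-1$ while $\partial\Omega\cap V_i$ has dimension $2(d-k_i)-1$) is not Fredholm by ellipticity alone; the parametrix construction (P6) applies only to generalized Toeplitz operators on a single manifold. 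The correct mechanism, which the paper carries out later in Section \ref{ser}, is to form $T_iT_i^{\ast}=T_{X_i}^{1/2}(R_{\partial V_i}\varPi)T_{Y}^{-1}(R_{\partial V_i}\varPi)^{\ast}T_{X_i}^{1/2}$, observe that this \emph{is} an elliptic generalized Toeplitz operator of order $0$ on the single manifold $\partial\Omega\cap V_i$ and hence Fredholm, and then conclude that $T_i^{\ast}$ is left semi-Fredholm, equivalently that $T_i$ (and so $R_{V_{i,\Omega}}$) is right semi-Fredholm. Your closing remark about the cokernel localizing to the kernel of the adjoint gestures toward this, but as written your argument rests on the false Fredholmness claim, so the step delivering the conclusion is unjustified.
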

\begin{proof}  It follows from \cite[Corollary 3]{EnE15} that each restriction map $R_{V_{i,\Omega}}$ is right semi-Fredholm, i.e., each $R_{V_{i,\Omega}}$ is continuous and its range $\text{Ran}\hspace{0.1em} R_{V_{i,\Omega}}$ has finite codimension, which implies that $\text{Ran}\hspace{0.1em} R_{V_{i,\Omega}}$ is closed in $ \mathcal{O}^{s-k_i/2}(\overline{V_{i,\Omega}})$ and $$\text{dim}\hspace{0.1em} \mathcal{O}^{s-k_i/2}(\overline{V_{i,\Omega}})/\text{Ran}\hspace{0.1em} R_{V_{i,\Omega}}<\infty$$ for all $i=1,\cdots,m.$ Since $\text{Ran}\hspace{0.1em} R_{V_{\Omega}}= \bigoplus_{i=1}^m \text{Ran}\hspace{0.1em} R_{V_{i,\Omega}}$ is closed and the following two Hilbert spaces $$\bigoplus_{i=1}^m  \mathcal{O}^{s-k_i/2}(\overline{V_{i,\Omega}})/\bigoplus_{i=1}^m \text{Ran}\hspace{0.1em} R_{V_{i,\Omega}}\hspace{0.5em}\text{and}\hspace{0.5em}\bigoplus_{i=1}^m  \mathcal{O}^{s-k_i/2}(\overline{V_{i,\Omega}})/ \text{Ran}\hspace{0.1em} R_{V_{i,\Omega}}$$ are unitary equivalent, it follows that

$$\text{dim}\hspace{0.1em} \bigoplus_{i=1}^m  \mathcal{O}^{s-k_i/2}(\overline{V_{i,\Omega}})/\text{Ran}\hspace{0.1em} R_{V_{\Omega}}=\sum_{i=1}^m \text{dim}\hspace{0.1em} \mathcal{O}^{s-k_i/2}(\overline{V_{i,\Omega}})/ \text{Ran}\hspace{0.1em} R_{V_{i,\Omega}}<\infty.$$ This completes the proof.
\end{proof}

\begin{rem}\label{KRV} When $V\subset\Omega$ is irreducible and smooth, namely $V\subset\Omega$ is a complex submanifold, it is clear that $M_V=\text{Ker}R_{V_{\Omega}}$ and $M_V^\bot=(\text{Ker}R_{V_{\Omega}})^\bot,$ thus  it introduces a nature injection from $M_V^\bot$ into $  \mathcal{O}^{s-k/2}(\overline{{V_{\Omega}}}),$ where $k=d-\text{dim}_{\mathbb{C}}\hspace{0.1em}V$ is the codimension of $V.$ Furthermore, if $\Omega$ is a Stein manifold, then the restriction map $R_{V_{\Omega}}$ is actually surjective for $s<1/2,$ thus it introduces an invertible continuous map from $M_V^\bot$ onto $  \mathcal{O}^{s-k/2}(\overline{{V_{\Omega}}})$ for $s<1/2.$
\end{rem}

Since $V$ is smooth in a neighborhood of $\partial\Omega$ and intersects $\partial\Omega$ transversally by the hypothesis, which implies that each of its irreducible components $V_i$ also intersects $\partial\Omega$ transversally and with the smooth strongly pseudoconvex boundary $ \partial(V_{i,\Omega})= \partial\Omega\cap V_i.$   For each irreducible component $V_i,$ we denote $\rho_{V_i}$ by its related defining function as well as its almost-analytic extension, by the transversality we can take $\rho_{V_i}:=\rho|_{V_i\times V_i}.$ We also denote by $\nu_{V_i}=\text{Im} (\partial\rho_{V_i})$ the corresponding $1$-form on the boundary $\partial\Omega\cap V_i,$ and by $\varPi_{V_i},{K}_{V_i},\gamma_{V_i},$ the Szeg\"{o} projection, Poisson operator, and trace operator, respectively. In particular, when $\overline{\Omega}\subset V_i,$ for some irreducible component $V_i,$ we know that $\overline{\Omega}\subset V=V_i,$ and
then $\Omega\cap V=\Omega\cap V_i=\Omega,$ thus in this case,  $\rho_{V_i}$ degenerates to $\rho$ on the whole of $\Omega,$ and $\eta_{V_i},\varPi_{V_i},{K}_{V_i},\gamma_{V_i}$ degenerate to $\eta,\varPi,{K},\gamma$ on the whole of $ \partial\Omega,$ respectively. We define the operator $R_{\partial V_i}$ as \begin{equation}\label{reci} R_{\partial V_i}:=\gamma_{V_i}R_{V_i}{K}: C^{\infty}( \partial\Omega)\rightarrow C^{\infty}( \partial\Omega\cap V_i),\end{equation} which is the restriction of $R_{V_i}$ to the boundary $\partial\Omega\cap V_i,$ $i=1,\cdots,m.$

We now collect some facts on generalized Toeplitz operators in the following proposition, which will be needed later on.
\begin{prop}\cite{Bou79,BG81,EnE15} The  generalized  Toeplitz operators possess the following properties:
\begin{enumerate}
\item[(P1)] For any $T_P$, $P\in\Psi^m(\partial \Omega)$, there in fact exists
$Q\in\Psi^m(\partial \Omega)$ such that $T_P=T_Q$ and $Q\varPi=\varPi Q$.
(Hence, $T_P=T_Q$ is just the restriction of $Q$ to the Hardy space. 
 Particularly, it follows that generalized Toeplitz operators
form an algebra.)
\item[(P2)]  It can happen that $T_P=T_Q$ for two different $P$ and $Q,$ with the same principal symbol restricted to the half-line bundle $ \Sigma.$  Moreover, one can define unambiguously the order of $T_Q$ as $$\text{ord}\hspace{0.1em}(T_Q):=\min\{\text{ord}\hspace{0.1em}(P):\;
T_P=T_Q, \sigma(P)|_\Sigma\neq\bm{0}\},$$ and the symbol of $T_Q$ as $\sigma(T_Q):=\sigma(Q)|_\Sigma$
if $\text{ord}\hspace{0.1em}(Q)=\text{ord}\hspace{0.1em}(T_Q)$.
\item[(P3)]  The order and the symbol of generalized  Toeplitz operators obey the usual laws: $\text{ord}\hspace{0.1em}(T_QT_P)=
\text{ord}\hspace{0.1em}(T_Q)+\text{ord}\hspace{0.1em}(T_P)$ and $\sigma(T_Q T_P)=\sigma(T_Q)\sigma(T_P)$.
\item[(P4)]  If $\text{ord}\hspace{0.1em}(T_P)=m$, then $T_P$ maps $\mathcal{O}^s(\partial{\Omega})$ continuously into
$\mathcal{O}^{s-m}(\partial{\Omega})$, for any $s\in\mathbb{R}$. In~particular, if $\text{ord}\hspace{0.1em}(T_P)=0$ then
$T_P$ is a bounded operator on~$L^2(\partial{\Omega})$; if~$\text{ord}\hspace{0.1em}(T_P)<0$, then it is
even compact. 
\item[(P5)]  If $P\in\Psi^m(\partial \Omega)$ and $\sigma(T_P)=0$, then there exists
$Q\in\Psi^{m-1}(\partial \Omega)$ with $T_Q=T_P$.

\item[(P6)]  A generalized Toeplitz operator $T_P$
is said to be elliptic if $\sigma(T_P)$ does not vanish.
Then $T_P$ has a parametrix, i.e., there exists an elliptic generalized Toeplitz
operator $T_Q$ of order $-\text{ord}\hspace{0.1em}(T_P)$, with $\sigma(T_Q)=\sigma(T_P)^{-1}$,
such that $T_P T_Q-I$ and $T_Q T_P-I$ are smoothing operators
(i.e.,~have Schwartz kernel in $C^\infty(\partial{\Omega}\times\partial{\Omega})$).
\end{enumerate}
\end{prop}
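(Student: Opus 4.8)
This proposition is the microlocal structure theory of generalized Toeplitz operators on a compact strongly pseudoconvex CR manifold, due to Boutet de Monvel and Guillemin \cite{Bou79,BG81}; the plan is to reduce everything to a universal Fock-space model attached to the characteristic cone of $\bar\partial_b$ and then to read off (P1)--(P6) from the symbol calculus there. The geometry involved is this: $\Sigma\subset T^*\partial\Omega\setminus\bm{0}$ is the symplectic half-line sub-bundle spanned positively by the contact form $\nu$ (equivalently by $\partial\rho|_{\partial\Omega}$), it is the positive part of the characteristic set of the tangential Cauchy--Riemann operator $\bar\partial_b$, and strong pseudoconvexity is exactly the positive definiteness of its transverse symplectic normal bundle. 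The Szeg\"o projector $\varPi$ projects $L^2(\partial\Omega)$ onto $\mathcal{O}^0(\partial\Omega)=\ker\bar\partial_b$, and by the classical parametrix for the Szeg\"o kernel it is a Fourier integral operator with complex-valued phase, microlocally concentrated on $\Sigma$, with $\varPi=\varPi^*=\varPi^2$ modulo smoothing.

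The core step will be to put $\varPi$ into a microlocal normal form. Choosing Darboux-type coordinates that identify a conic neighborhood of a point of $\Sigma$ with a model symplectic cone and the transverse symplectic normal bundle with $\mathbb{C}^{\,n-1}$ (where $\dim\partial\Omega=2n-1$), one conjugates $\varPi$ by an elliptic Fourier integral operator, via an Egorov-type argument, to the model projector $\varPi_0=\mathrm{id}_\Sigma\otimes\pi_0$, where $\pi_0$ is the fiberwise rank-one projection onto the Bargmann--Fock vacuum in $\mathbb{C}^{\,n-1}$ --- here strong pseudoconvexity is precisely the positivity making $\pi_0$ of rank one. In this model, compressing a classical pseudodifferential operator $P$ of order $m$ yields a Toeplitz-type operator on $L^2$ of a conic neighborhood in $\Sigma$ whose full symbol has leading part $\sigma(P)|_\Sigma$, and compositions obey a star-product: pointwise multiplication to leading order, with commutator lowering order by one. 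Every item of the proposition is transparent in the model; the content of the proof is to transport it back through the normal form.

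Granting the normal form, I would derive (P1)--(P6) by symbol bookkeeping. For (P1): given $P\in\Psi^m$, build $Q\in\Psi^m$ with $Q\varPi=\varPi Q$ and $T_Q=T_P$ by an order-by-order correction, starting from a $Q_0$ with $\sigma(Q_0)|_\Sigma=\sigma(P)|_\Sigma$ that is ``transversally constant'' along the normal fibers near $\Sigma$ (so that $[Q_0,\varPi]$ has order $\le m-1$), then iterating to kill successive obstructions and summing asymptotically; in particular the compressed operators form an algebra. For (P3): by (P1), $T_QT_P=\varPi Q\varPi P\varPi=\varPi QP\varPi=T_{QP}$, and the ordinary pseudodifferential calculus for $QP$ gives additivity of orders and multiplicativity of symbols, the leading symbol $\sigma(Q)|_\Sigma\,\sigma(P)|_\Sigma$ not vanishing; this also makes the definitions in (P2) consistent, since $T_P=0$ forces $\sigma(P)|_\Sigma$ to vanish to infinite order and the $\varPi$-commuting representative to be smoothing. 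For (P5): if $\sigma(T_P)=0$ the corrected representative can be taken of order $m-1$. For (P4): $P\colon H^s(\partial\Omega)\to H^{s-m}(\partial\Omega)$ is bounded and $\varPi$ preserves the Sobolev scale, so $T_P$ maps $\mathcal{O}^s(\partial\Omega)$ into $\mathcal{O}^{s-m}(\partial\Omega)$; for $m=0$ this is $L^2$-boundedness, and for $m<0$ compactness follows from Rellich's theorem on the compact $\partial\Omega$. For (P6): $\sigma(T_P)^{-1}$ is again a legitimate leading symbol, so it is realized by a generalized Toeplitz operator $T_{Q_0}$ of order $-\mathrm{ord}(T_P)$ with that symbol; then $T_PT_{Q_0}-I$ and $T_{Q_0}T_P-I$ have vanishing symbol, hence order $-1$ by (P5), and a geometric-series correction upgrades $T_{Q_0}$ to a two-sided parametrix modulo smoothing.

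The main obstacle will be the normal form of the second paragraph: the parametrix for the Szeg\"o kernel together with the Egorov conjugation into the Fock model is the genuinely hard analytic input, and it is exactly where strong pseudoconvexity is indispensable. Once the model is in hand, (P1)--(P6) reduce to bookkeeping in the symbol calculus; the only remaining substantive point is (P1) --- producing, for each $P$, a representative commuting with $\varPi$ --- since that is what turns the operators $\varPi P\varPi$ into a genuine graded algebra and makes (P3), (P5) and (P6) work.
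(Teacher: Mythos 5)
The paper does not prove this proposition: it is stated with a citation and imported wholesale from Boutet de Monvel--Guillemin and Engli\v{s}--Eschmeier, so there is no in-paper argument to compare against. Your outline is a faithful reconstruction of the argument in those sources --- the Fourier integral parametrix for the Szeg\"o projector with complex phase concentrated on $\Sigma$, the microlocal normal form conjugating $\varPi$ to the fiberwise rank-one Fock-vacuum projector (where strong pseudoconvexity enters), the order-by-order construction of a $\varPi$-commuting representative for (P1), and then (P2)--(P6) by symbol bookkeeping, with (P6) obtained from (P5) and a Neumann-series correction. One imprecision worth flagging in your parenthetical for (P2): if $T_P=0$, the $\varPi$-commuting representative $Q$ satisfies $Q\varPi=\varPi Q\varPi=0$, i.e.\ it annihilates the Hardy space, but it need not be a smoothing operator on all of $L^2(\partial\Omega)$; what the normal form actually gives, and all that (P2) requires, is that $\sigma(P)|_\Sigma=0$, which is enough to make the order and symbol of a generalized Toeplitz operator well defined.
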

 Where $\Sigma$ in (P2) is denoted  by the half-line bundle 
$$ \Sigma := \{(x,\xi)\in T^*(\partial\Omega): \xi=t\nu_x, t>0 \}, $$
where $\nu$ is the restriction to $\partial{\Omega}$ of the 1-form $\frac{1}{2\pi}\cdot\text{Im} (\partial\rho)=(\partial\rho-\bar{\partial}\rho)/(4\pi\sqrt{-1})$; the strict pseudoconvexity of $\partial\Omega$ implies that $\Sigma$ is a symplectic submanifold of the cotangent bundle $T^*(\partial\Omega).$ 

Let $P$ be a positive elliptic pseudodifferential operator of order $2s\in \mathbb{R},$ denote $H_P$ by the completation of $\mathcal{O}(\partial\Omega)$ with the norm $$\Vert u\Vert_{H_P}=\sqrt{\langle P u,u\rangle_{\partial\Omega} },\quad u \in \mathcal{O}(\partial\Omega),$$ where  $\mathcal{O}(\partial\Omega)$ is the set of smooth functions satisfying the tangental Cauchy-Riemann equations. It follows from (P6) and the  Seeley calculus for elliptic pseudodifferential operators that  $\mathcal{O}^s(\partial\Omega)$ and $H_P$ are semi-equivalent.  Moreover, for the positive elliptic generalized Toeplitz $T_P$ of order $2s,$ it can be checked that \begin{equation}\label{hpht} H_P=H_{T_P}.\notag\\ \end{equation}   
 Note that the Poisson operator $K:  L^2(\partial\Omega) \rightarrow L^2(\Omega)$ is bounded, its adjoint is denoted by $K^\ast:  L^2(\Omega)\rightarrow L^2(\partial\Omega).$ It follows from \cite{Bou79} that the operator $\Lambda_{s}=K^\ast M_{\vert\rho\vert^s}K$ is a positive elliptic pseudodifferential operator of order $-s-1$ for all $s>-1.$ In particularly, $\Lambda_{0}$ has order $-1.$ By direct calculation, we have \begin{equation}\label{lkoe}\int_\Omega \vert Ku\vert^2\vert\rho\vert^{2s}w^d=\langle K^\ast M_{\vert\rho\vert^{2s}}Ku,u\rangle_{\partial\Omega}= \langle \Lambda_{2s} u,u\rangle_{\partial\Omega}.\end{equation}   For each $s>-1,$ let $$A^2_{s,\rho}(\Omega)=L^2(\Omega,|\rho|^s w^d)\bigcap \mathcal{O}(\overline{\Omega}).$$ It follows from (\ref{lkoe}) that $\mathcal{O}^{s}(\overline{\Omega})$ is semi-equivalent to the weighted Bergman space $A^2_{-2s,\rho}(\Omega)$ for $s<1/2.$ 
 
We now endow $\mathcal{O}^{s}(\overline{\Omega})$ with an equivalent norm for every $s\in\mathbb{R},$ which coincides with the usual norm when $\Omega$ is the unit ball.  Denote $\{F_s\}_{s< -1}$ by a family of smooth functions on $\mathbb{R}$ satisfying the following conditions:

(1) For   $-(d+1) <s < -1,$  $$F_s(x)=\frac{\Gamma(dx)}{\Gamma(d)\Gamma(dx+s+1)},\quad \forall x\geq 1-\frac{1}{2d};$$

(2) For   $ s\leq -(d+1),$   $$F_s(x)=\frac{\Gamma(dx)}{(dx-d+1)^{d+s}\Gamma(dx-d+1)},\quad \forall x\geq 1-\frac{1}{2d};$$

(3) $F_s(x)=0, \forall x\leq -1. $
 
 We define $B_s=F_s(\Lambda_0^{-1})$  by continuous function calculus for (unbounded) selfadjoint operators for $s<-1.$ By direct calculation, we see that $F_s$ belongs to  H\"ormander class $S^{-s-1}(\mathbb{R})$ for every $s<-1.$ Combined with \cite[Theorem XII.1.3]{Tay81} for functional calculus of pseudodifferential operators, we conclude that $B_s$ is a positive elliptic pseudodifferential operator of order $-s-1$ for every $s<-1.$ We now define a family $\{\Vert\cdot\Vert_{\partial\Omega,s} \}_{s\in \mathbb{R}}$ of norms on $\mathcal{O}(\partial\Omega)$ as follows:
 
 \begin{equation}\label{norsm} \Vert u\Vert_{\partial\Omega,s}^2= \begin{cases}
{\langle \Lambda_{-2s-1} u,u\rangle_{\partial\Omega}}, & s<0 ;\\   
 {\langle  u,u\rangle_{\partial\Omega}},&  s=0;\\
  {\langle B_{-2s-1} u,u\rangle_{\partial\Omega}},& s>0.\\
\end{cases}
\end{equation}
From the above, we see that $\Vert\cdot\Vert_{\partial\Omega,s}$ is an  equivalent norm of $\mathcal{O}^s(\partial\Omega)$ for every $s\in\mathbb{R}.$ Denote the norm $\Vert\cdot\Vert_{\Omega,s}$ on $\mathcal{O}^{s}(\overline{\Omega})$ as \begin{equation}\label{norms}\Vert u\Vert_{\Omega,s}=\Vert\gamma u\Vert_{\partial\Omega,s-1/2}\end{equation} for every $s\in\mathbb{R}.$ It follows from (\ref{krps}) that $\Vert\cdot\Vert_{\Omega,s}$ is an  equivalent norm of $\mathcal{O}^{s}(\overline{\Omega})$ for every $s\in\mathbb{R}.$ The norms defined in (\ref{norsm}) and (\ref{norms}) are called canonical norms. The canonical norms enjoy the following properties.

\begin{lem}\label{phda} (1) The Poisson operator \begin{equation}K: (\mathcal{O}^{s-1/2}(\partial\Omega),\Vert\cdot\Vert_{\partial\Omega,s-1/2})\rightarrow(\mathcal{O}^{s}(\overline{\Omega}),\Vert\cdot\Vert_{\Omega,s}) \notag\end{equation} and its inverse  $\gamma$ are  unitary for every $s\in\mathbb{R}.$

(2) $(\mathcal{O}^{s}(\overline{\Omega}),\Vert\cdot\Vert_{\Omega,s})=A^2_{-2s,\rho}(\Omega)$  for $s<1/2.$ 

Moreover, when $W=\mathbb{C}^d$ with the usual Hermitian mertic and $\Omega=\mathbb{B}^d$ with the defining function $\rho(z)=\vert z\vert^2-1,$ the following hold.

(3) $(\mathcal{O}^{{1}/{2}}(\overline{\mathbb{B}^d}),\Vert\cdot\Vert_{\Omega,1/2})=H^2(\mathbb{B}^d),$   the  Hardy space on $\mathbb{B}^d.$

(4)  $(\mathcal{O}^{{d}{/2}}(\overline{\mathbb{B}^d}),\Vert\cdot\Vert_{\Omega,d/2})=H^2_d(\mathbb{B}^d),$  the Drury-Arveson space on $\mathbb{B}^d.$
\end{lem}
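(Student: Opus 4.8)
The plan is to handle (1) and (2) directly from the machinery already in place, and then to get (3), (4) by an explicit computation on the ball, exploiting that the canonical norm is governed, through $\gamma$, by the operators $\Lambda_{0}$ and $F_{-d}(\Lambda_0^{-1})$. For (1), definition \eqref{norms} gives $\Vert K\phi\Vert_{\Omega,s}=\Vert\gamma K\phi\Vert_{\partial\Omega,s-1/2}=\Vert\phi\Vert_{\partial\Omega,s-1/2}$ for all $\phi\in\mathcal{O}^{s-1/2}(\partial\Omega)$, by $\gamma K=\text{Id}$ in \eqref{krps}; since $K$ maps $\mathcal{O}^{s-1/2}(\partial\Omega)=\gamma(\mathcal{O}^{s}(\overline\Omega))$ bijectively onto $\mathcal{O}^{s}(\overline\Omega)$ with inverse $\gamma$, it is a surjective isometry, hence unitary, and so is $\gamma$. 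For (2), let $f\in\mathcal{O}(\overline\Omega)$; then $f$ is $\bar\partial$-harmonic, so $K\gamma f=f$, and substituting $u=\gamma f$ in \eqref{lkoe} with $-s$ in place of $s$ (valid since $-2s>-1$ when $s<1/2$) yields $\int_\Omega|f|^2|\rho|^{-2s}\,w^d=\langle\Lambda_{-2s}\gamma f,\gamma f\rangle_{\partial\Omega}$, while \eqref{norms} together with the $s-1/2<0$ branch of \eqref{norsm} gives $\Vert f\Vert_{\Omega,s}^2=\langle\Lambda_{-2(s-1/2)-1}\gamma f,\gamma f\rangle_{\partial\Omega}=\langle\Lambda_{-2s}\gamma f,\gamma f\rangle_{\partial\Omega}$. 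Thus the canonical norm agrees with the weighted Bergman norm on the dense subspace $\mathcal{O}(\overline\Omega)\cap H^{s}(\overline\Omega)$, and the semi-equivalence $\mathcal{O}^{s}(\overline\Omega)\simeq A^2_{-2s,\rho}(\Omega)$ already noted above upgrades this to the isometric identification claimed in (2).

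For (3) I specialise to $\rho(z)=|z|^2-1$ on $\mathbb{B}^d$; by the $s=0$ case of \eqref{norsm} the canonical norm at $s=1/2$ is $\Vert f\Vert_{\Omega,1/2}^2=\int_{\partial\mathbb{B}^d}|\gamma f|^2\,\lambda$, so everything hinges on identifying $\lambda=\nu\wedge(\textbf{d}\nu)^{d-1}$. A short computation gives $\nu=\tfrac{1}{2\pi}\sum_j(x_j\,\textbf{d}y_j-y_j\,\textbf{d}x_j)=\tfrac{1}{\pi}\,\iota_R\beta$, with $R=\tfrac{1}{2}\sum_j(x_j\partial_{x_j}+y_j\partial_{y_j})$ the radial field and $\beta=\sum_j\textbf{d}x_j\wedge\textbf{d}y_j$, whence $\textbf{d}\nu=\tfrac{1}{\pi}\beta$ and $\lambda=\tfrac{1}{\pi^d}\iota_R\beta\wedge\beta^{d-1}=\tfrac{1}{d\pi^d}\iota_R(\beta^d)=\tfrac{(d-1)!}{2\pi^d}\,dS$; since $\text{Vol}(S^{2d-1})=2\pi^d/(d-1)!$, this is exactly the normalised surface measure $d\sigma$. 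Combined with the unitarity of $\gamma\colon\mathcal{O}^{1/2}(\overline{\mathbb{B}^d})\to\mathcal{O}^0(\partial\mathbb{B}^d)$ from (1) and the classical fact that $\mathcal{O}^0(\partial\mathbb{B}^d)$, the Hardy space of CR functions on the sphere with the $L^2(d\sigma)$-norm, is the boundary-value realisation of $H^2(\mathbb{B}^d)$, this proves (3).

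For (4), the case $d=1$ coincides with (3) because $H^2_1(\mathbb{B}^1)=H^2(\mathbb{B}^1)$, so take $d\ge2$; then $s-1/2=(d-1)/2>0$ and \eqref{norsm}, \eqref{norms} give $\Vert f\Vert_{\Omega,d/2}^2=\langle B_{-d}\gamma f,\gamma f\rangle_{\partial\Omega}$ with $B_{-d}=F_{-d}(\Lambda_0^{-1})$. Both this inner product and the Drury-Arveson inner product on holomorphic polynomials are invariant under the $U(d)$-action $f\mapsto f\circ U^{-1}$ — for the former because $\rho$, $w$, $\bar\partial$, hence $K$, $\Lambda_0$ and $B_{-d}=F_{-d}(\Lambda_0^{-1})$, are all $U(d)$-equivariant. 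Since $\mathcal{O}^0(\partial\mathbb{B}^d)=\bigoplus_{k\ge0}\mathcal{H}_k$ with $\mathcal{H}_k$ the boundary values of degree-$k$ holomorphic homogeneous polynomials, pairwise non-isomorphic irreducible $U(d)$-modules, Schur's lemma reduces the equality of the two norms to checking $\Vert z_1^k\Vert_{\Omega,d/2}^2=\Vert z_1^k\Vert_{H^2_d(\mathbb{B}^d)}^2=1$ for each $k$. By invariance and irreducibility $\Lambda_0$ acts on $\mathcal{H}_k$ as a positive scalar $\mu_k$; as $z_1^k$ is $\bar\partial$-harmonic, $K\gamma(z_1^k)=z_1^k$ and $\Lambda_0=K^\ast K$, so $\mu_k\,\langle\gamma(z_1^k),\gamma(z_1^k)\rangle_\lambda=\int_{\mathbb{B}^d}|z_1|^{2k}\,w^d=\tfrac{d!\,k!}{(d+k)!}$ (a Beta integral, with the normalization of $w$), while $\langle\gamma(z_1^k),\gamma(z_1^k)\rangle_\lambda=\int_{\partial\mathbb{B}^d}|z_1|^{2k}\,d\sigma=\tfrac{(d-1)!\,k!}{(d+k-1)!}$ by (3); hence $\mu_k=\tfrac{d}{d+k}$ and $\mu_k^{-1}=1+\tfrac{k}{d}\ge1-\tfrac{1}{2d}$, so $F_{-d}(\mu_k^{-1})=\tfrac{\Gamma(d+k)}{\Gamma(d)\Gamma(k+1)}=\tfrac{(d+k-1)!}{(d-1)!\,k!}$ by the defining formula for $F_{-d}$. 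As $\gamma(z_1^k)$ is then an eigenvector of $\Lambda_0^{-1}$, functional calculus gives $\Vert z_1^k\Vert_{\Omega,d/2}^2=F_{-d}(\mu_k^{-1})\langle\gamma(z_1^k),\gamma(z_1^k)\rangle_\lambda=\tfrac{(d+k-1)!}{(d-1)!\,k!}\cdot\tfrac{(d-1)!\,k!}{(d+k-1)!}=1=\Vert z_1^k\Vert_{H^2_d(\mathbb{B}^d)}^2$, proving (4).

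The main obstacle will be part (4). Two points demand care: first, one must verify that the pseudodifferential operator $\Lambda_0$ and the functional-calculus operator $B_{-d}$ genuinely preserve the $U(d)$-isotypic decomposition of the Hardy space and act as scalars on each $\mathcal{H}_k$ — this rests on $U(d)$-equivariance together with $K\gamma=\text{Id}$ on $\bar\partial$-harmonic functions, which is exactly what turns the scalar $\mu_k$ into the convergent integral $\int_{\mathbb{B}^d}|z_1|^{2k}w^d$; second, one must carry the normalization constants of the Hermitian metric $w$ and of the boundary density $\lambda$ through accurately so that $\mu_k=\tfrac{d}{d+k}$ holds on the nose, since the functions $F_s$ were manufactured precisely to make $F_{-d}(\mu_k^{-1})$ the reciprocal of the Drury-Arveson weight, and a stray constant in $\mu_k$ would destroy the cancellation. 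A lesser, routine point common to all four assertions is to record that holomorphic polynomials (resp. $\mathcal{O}(\overline\Omega)$) are norm-dense in $H^2(\mathbb{B}^d)$, $H^2_d(\mathbb{B}^d)$ and $A^2_{-2s,\rho}(\Omega)$, so that the isometries established on these dense subspaces extend to the stated identifications of Hilbert spaces.
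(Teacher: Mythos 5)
Your proposal is correct and follows essentially the same route as the paper: parts (1) and (2) are the same one-line deductions from (\ref{krps}), (\ref{norms}), (\ref{norsm}) and (\ref{lkoe}), and for (3)--(4) you are simply supplying the ``direct calculation'' that the paper leaves implicit --- identifying $\lambda$ with the normalized surface measure (the paper instead evaluates $\int_{\partial\mathbb{B}^d}|z^n|^2\lambda$ via Stokes, arriving at the same values $\tfrac{(d-1)!\,n!}{(d-1+|n|)!}$) and diagonalizing $\Lambda_0$ on each $\mathcal{H}_k$ with eigenvalue $d/(d+k)$ by $U(d)$-equivariance and Schur's lemma, so that $F_{-d}$ yields exactly the Drury--Arveson weights. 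The one caveat, which you already flag, is that the eigenvalue computation $\mu_k=d/(d+k)$ requires the volume form $w^d$ on $\mathbb{B}^d$ to be normalized to total mass one; this is the convention under which the paper's asserted value $\Vert z^n\Vert_{\mathbb{B}^d,d/2}^2=n!/|n|!$ in part (4) actually holds, so your argument matches the intended one.
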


\begin{proof} (1) It is immediate from (\ref{krps}) and (\ref{norms}).

(2)  It follows from (\ref{lkoe}) that $(\mathcal{O}^{s}(\overline{\Omega}),\Vert\cdot\Vert_{\Omega,s}) \simeq A^2_{-2s,\rho}(\Omega).$ Togather with  (\ref{norms}) leads to the desired equality of Hilbert spaces.

(3) By (\ref{norms}), it suffices to prove $(\mathcal{O}^{0}({\partial\mathbb{B}^d}),\Vert\cdot\Vert_{\partial\mathbb{B}^d,0})=H^2(\mathbb{B}^d)$. Since $\overline{\mathbb{B}^d}$ is compact, by  the Stokes formula we have \begin{equation}\begin{split}
\Vert z^n \Vert_{\partial\mathbb{B}^d,0}^2&=\int_{\partial\mathbb{B}^d}\vert z^n\vert^2 \lambda\\ \notag
&=\frac{d+\vert n\vert}{d}\int_{\mathbb{B}^d}\vert z^n\vert^2\frac{d!}{\pi^d}\prod_{j=1}^d(\frac{\sqrt{-1}}{2}\textbf{d}z_j\land\textbf{d}\bar{z}_j)\\
&=\frac{(d-1)!n!}{(d-1+\vert n\vert)!}
\end{split}\end{equation}
 for all $n,$ where $n=(n_1,\cdots,n_d)\in \mathbb{N}^d, z^n=z_1^{n_1}\cdots z_d^{n_d}, n!=\prod_{j=1}^d(n_j!), \vert n\vert =n_1+\cdots +n_d.$

(4) By direct calculation, we have  $$\Vert z^n \Vert_{\mathbb{B}^d,d/2}^2=\langle B_{-d} z^n,z^n\rangle_{\partial\mathbb{B}^d,0}=\frac{n!}{\vert n\vert!}$$ for all $n\in \mathbb{N}^d.$
\end{proof}

\begin{prop}\label{rpdu} The Bergman-Sobolev space $(\mathcal{O}^{s}(\overline{\Omega}),\Vert\cdot\Vert_{\Omega,s})$ is a  reproducing kernel Hilbert space for each $s\in\mathbb{R}.$ \end{prop}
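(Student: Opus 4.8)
The plan is to show that for every $z\in\Omega$ the evaluation functional $\mathrm{ev}_z\colon f\mapsto f(z)$ is bounded on $(\mathcal{O}^{s}(\overline{\Omega}),\Vert\cdot\Vert_{\Omega,s})$ and that the correspondence $f\mapsto f|_{\Omega}$ faithfully realizes $\mathcal{O}^{s}(\overline{\Omega})$ as a space of holomorphic functions on $\Omega$; the reproducing kernel is then delivered by the Riesz representation theorem.

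First I would check that every $f\in\mathcal{O}^{s}(\overline{\Omega})$ restricts to a holomorphic function on $\Omega$ and that this realization is injective. Since $\mathcal{O}^{s}(\overline{\Omega})\subset\mathcal{N}(D_{\bar{\partial}},s)$ and $D_{\bar{\partial}}$ is a second order elliptic, hence hypoelliptic, operator, one gets $f|_{\Omega}\in C^{\infty}(\Omega)$; on relatively compact subsets of $\Omega$ the identity $0=D_{\bar\partial}f=-\bar\partial^{\ast}\bar\partial f$ forces $\bar\partial f=0$ after integration by parts, so $f|_{\Omega}\in\mathcal{O}(\Omega)$. Injectivity is immediate: if $f|_{\Omega}\equiv 0$ then the zero section competes in the infimum defining $\Vert\cdot\Vert_{s,\overline{\Omega}}$, so $f=0$ in $H^{s}(\overline{\Omega})$ and hence in $\mathcal{O}^{s}(\overline{\Omega})$; as $\Vert\cdot\Vert_{\Omega,s}$ is an equivalent norm this says $f=0$ in $(\mathcal{O}^{s}(\overline{\Omega}),\Vert\cdot\Vert_{\Omega,s})$.

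Next comes the boundedness of $\mathrm{ev}_z$. Fix $z\in\Omega$ and a holomorphic coordinate ball $B=B(z,2r)$ with $\overline{B}\subset\Omega$. The interior a priori estimate for the elliptic operator $D_{\bar\partial}$ gives, for every $t\in\mathbb{R}$, a constant $C_{t}=C_{t}(z,r)$ with $\Vert f\Vert_{H^{t}(B(z,r))}\leq C_{t}\Vert f\Vert_{H^{s}(B)}$ for all $f\in\mathcal{N}(D_{\bar\partial},s)$ (iterating elliptic regularity on a chain of shrinking balls, using $D_{\bar\partial}f=0$). Taking $t$ large enough that $H^{t}(B(z,r))\hookrightarrow C(\overline{B(z,r)})$ and evaluating at $z$ yields $\abs{f(z)}\leq C_{z}\Vert f\Vert_{H^{s}(B)}\leq C_{z}\Vert f\Vert_{s,\overline{\Omega}}$, and since the canonical norm $\Vert\cdot\Vert_{\Omega,s}$ is equivalent to $\Vert\cdot\Vert_{s,\overline{\Omega}}$ on $\mathcal{O}^{s}(\overline{\Omega})$ we obtain $\abs{f(z)}\leq C_{z}'\Vert f\Vert_{\Omega,s}$. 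A second, more elementary route is available when $s<1/2$: by Lemma \ref{phda}(2) one has $(\mathcal{O}^{s}(\overline{\Omega}),\Vert\cdot\Vert_{\Omega,s})=A^{2}_{-2s,\rho}(\Omega)$, and the plurisubharmonic sub-mean-value inequality $\abs{f(z)}^{2}\lesssim_{z}\int_{B(z,r)}\abs{f}^{2}w^{d}\lesssim_{z}\Vert f\Vert_{A^{2}_{-2s,\rho}}^{2}$ — the weight $\abs{\rho}^{-2s}$ being bounded below by a positive constant on $\overline{B(z,r)}$ — gives both boundedness of $\mathrm{ev}_z$ and completeness; the remaining range $s\geq 1/2$ then follows from the continuous injection $\mathcal{O}^{s}(\overline{\Omega})\hookrightarrow\mathcal{O}^{s'}(\overline{\Omega})$ for a fixed $s'<1/2$, induced by $H^{s}(\overline{\Omega})\hookrightarrow H^{s'}(\overline{\Omega})$ and compatible with the canonical norms up to equivalence.

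Finally, with $\mathrm{ev}_z$ bounded the Riesz representation theorem produces $k^{s}_{z}\in\mathcal{O}^{s}(\overline{\Omega})$ with $f(z)=\langle f,k^{s}_{z}\rangle_{\Omega,s}$ for all $f$, and $K_{s}(z,\zeta):=\langle k^{s}_{\zeta},k^{s}_{z}\rangle_{\Omega,s}=k^{s}_{\zeta}(z)$ is the reproducing kernel, so $(\mathcal{O}^{s}(\overline{\Omega}),\Vert\cdot\Vert_{\Omega,s})$ is a reproducing kernel Hilbert space. The step I expect to be the main obstacle is making the realization of $\mathcal{O}^{s}(\overline{\Omega})$ as a genuine function space rigorous for all real $s$ — in particular for $s<0$, where $H^{s}$ consists of distributions and "point values" are a priori meaningless — together with the validity of the interior estimate in every Sobolev scale; both are handled by the ellipticity and hypoellipticity of $D_{\bar\partial}$, which upgrades distributional solutions to holomorphic functions and furnishes the required interior bounds, with the identification $(\mathcal{O}^{s}(\overline{\Omega}),\Vert\cdot\Vert_{\Omega,s})=A^{2}_{-2s,\rho}(\Omega)$ of Lemma \ref{phda}(2) providing a clean anchor for the subcritical range.
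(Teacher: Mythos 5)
Your second route is essentially the paper's proof: for $s<1/2$ the paper likewise invokes Lemma \ref{phda}(2) to identify $(\mathcal{O}^{s}(\overline{\Omega}),\Vert\cdot\Vert_{\Omega,s})$ with $A^2_{-2s,\rho}(\Omega)$ and bounds $|f(z_0)|$ by the mean-value property plus Cauchy--Schwarz in a coordinate chart, and for $s\geq 1/2$ it reduces to the subcritical case via a continuous embedding into a lower index (phrased on the boundary, through the unitarity of $\gamma$ and Rellich's lemma for $\mathcal{O}^{s-1/2}(\partial\Omega)\hookrightarrow\mathcal{O}^{-1}(\partial\Omega)$, rather than through $H^{s}(\overline{\Omega})\hookrightarrow H^{s'}(\overline{\Omega})$ as you do -- the two packagings are interchangeable here). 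Your first route, iterating interior elliptic estimates for $D_{\bar\partial}$ and then applying Sobolev embedding, is a legitimate alternative that treats all $s\in\mathbb{R}$ uniformly without the weighted-Bergman identification; the paper does not use it, but it buys nothing extra for this proposition beyond independence from Lemma \ref{phda}.

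One sub-step of your preliminary discussion is wrong as stated: the claim that $0=D_{\bar\partial}f=-\bar\partial^{*}\bar\partial f$ ``forces $\bar\partial f=0$ after integration by parts'' on relatively compact subsets fails, because the integration by parts produces boundary terms that do not vanish (indeed $\bar z$ satisfies $D_{\bar\partial}\bar z=0$ on the disc without being holomorphic, which is exactly why $\mathcal{N}(D_{\bar\partial},s)$ is a strictly larger harmonic-Sobolev space than $\mathcal{O}^{s}(\overline{\Omega})$). The conclusion you need is nevertheless true and cheaper to get: $\mathcal{O}^{s}(\overline{\Omega})$ is by definition the closure of $\mathcal{O}(\overline{\Omega})\cap H^{s}(\overline{\Omega})$, and the interior estimates you already invoke show that $H^{s}$-convergence of a sequence of holomorphic functions implies locally uniform convergence on $\Omega$, so the limit is holomorphic. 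With that repair the argument is complete and consistent with the paper's.
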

\begin{proof}  For $s<1/2,$  it follows from (\ref{phda}) that  $(\mathcal{O}^{s}(\overline{\Omega}),\Vert\cdot\Vert_{\Omega,s})=A^2_{-2s,\rho}(\Omega).$  For any $z_0\in\Omega,$ there exists a local coordinate chart $(U,\phi,z)$ such that  $U\subset \mathbb{C}^d$ has finite volum $\text{Vol}(U)<\infty$ and 
\begin{equation}\begin{split}\label{evaf} \vert f(z_0)\vert&=\vert f\circ\phi^{-1}(\phi(z_0))\vert\\
&=\Big\vert\frac{1}{\text{Vol}(U)}\int_{U}f\circ\phi^{-1}(z)dz\Big\vert\\
&=\frac{1}{\text{Vol}(U)}\Big\vert\int_{U}f\circ\phi^{-1}(z)\vert \rho\circ\phi^{-1}\vert^{-s} \sqrt[4]{\det(g)} \frac{\vert \rho\circ\phi^{-1}\vert^{s}}{ \sqrt[4]{\det(g)} }dz\Big\vert\\
&\leq \frac{1}{\text{Vol}(U)} \Big(\int_{U}\vert f\circ\phi^{-1}(z)\vert^2\vert \rho\circ\phi^{-1}\vert^{-2s} \sqrt[2]{\det(g)} dz \Big)^{\frac{1}{2}}\Big(\int_{U} \frac{\vert \rho\circ\phi^{-1}\vert^{2s}}{ \sqrt[2]{\det(g)} } dz \Big)^{\frac{1}{2}}\\
&\leq C \Vert f\Vert_{A^2_{-2s,\rho}(\Omega)}.
\end{split}\end{equation}
This shows that the evaluation functional is bounded, and hence  $(\mathcal{O}^{s}(\overline{\Omega}),\Vert\cdot\Vert_{\Omega,s}) $ is a reproducing kernel Hilbert space for each  $s<1/2.$  It follows from (\ref{norsm}) and (\ref{norms}) that the trace operator $\gamma: \mathcal{O}^s(\overline{\Omega})\rightarrow\mathcal{O}^{s-1/2}(\partial\Omega)$ is unitary with canonical norms for every $s\in\mathbb{R}.$  In particular, the Bergman-Sobolev space $\mathcal{O}^{-1/2}(\overline{\Omega})$ is unitary to $\mathcal{O}^{-1}(\partial\Omega).$ By Rellich's lemma for Sobolev spaces on compact Riemannian manifolds, it follows that the embedding $\text{Id}:  \mathcal{O}^{s-1/2}(\partial\Omega)\rightarrow\mathcal{O}^{-1}(\partial\Omega)$ is compact for each $s\geq 1/2.$ Combined with (\ref{evaf}), it follows that the evaluation functional is bounded on $\mathcal{O}^{s-1/2}(\partial\Omega)$ for $s\geq 1/2.$ Then $\mathcal{O}^{s-1/2}(\partial\Omega)$  is a reproducing kernel Hilbert space, and so is   $\mathcal{O}^s(\overline{\Omega})$ for each $s\geq 1/2.$ This completes the proof.
\end{proof}
From now on the norms of the Bergman-Sobolev space $\mathcal{O}^{s}(\overline{\Omega})$ and  Hardy-Sobolev space are assumed to be the canonical norms as defined above with respect to a given Hermitian metric unless otherwise stated.   The example provided in \cite[Remark 4.7]{Arv07} demonstrates that 
the $p$-essential normality is not preserved in general when the norm is instead by an equivalent norm. 

 We have the following key lemma by Weyl's law for positive elliptic generalized Toeplitz operators \cite[Theorem 1]{BG81}.
\begin{lem}\cite{EnE15}\label{lpq} Let $T_Q$ be a generalized Toeplitz operator of negative order $-q$ on $\partial \Omega,$ then $T_Q\in \mathcal{L}^{p}$ all $p>{\text{dim}_{\mathbb{C}}\hspace{0.1em}\Omega}/{q}.$
\end{lem}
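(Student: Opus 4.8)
The plan is to deduce the lemma from Weyl's law for a \emph{positive elliptic} generalized Toeplitz operator via a conjugation trick, because neither $T_Q$ nor $T_Q^*T_Q$ carries any ellipticity hypothesis. Regard $T_Q$ as a bounded operator on the Hardy space $\mathcal{O}^0(\partial\Omega)$ (by (P4); the Schatten class to which it belongs is independent of which $\mathcal{O}^s(\partial\Omega)$ is used). Then $A:=T_Q^*T_Q$ is, by (P3), a positive self-adjoint compact generalized Toeplitz operator of order $-2q$, and the singular numbers of $T_Q$ are $s_k(T_Q)=\lambda_k(A)^{1/2}$, where $\lambda_1\ge\lambda_2\ge\cdots$ are the eigenvalues of $A$. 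The obstruction is that $A$ need not be elliptic, so Weyl's law cannot be applied to $A$ directly.

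First I would fix a positive, invertible, elliptic generalized Toeplitz operator $B$ on $\partial\Omega$ of order $q$; such an operator is available from the calculus already set up in Section~2 — for instance a suitable real power of the positive, elliptic, injective (hence invertible) generalized Toeplitz operator of order $-1$ associated with $\Lambda_0=K^*K$ (injectivity of $K$ on the Hardy space coming from $\gamma K=\text{Id}$), obtained through the pseudodifferential functional calculus already used in the paper to construct the operators $B_s$, cf.\ \cite[Theorem XII.1.3]{Tay81}. Then $T_QB$ has order $(-q)+q=0$ by (P3), hence is bounded on $\mathcal{O}^0(\partial\Omega)$, so that $BAB=(T_QB)^*(T_QB)\le \|T_QB\|^2\,I$.

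Next I would conjugate this inequality by $B^{-1}$: since $B$ is self-adjoint and invertible, $BAB\le C I$ with $C=\|T_QB\|^2$ gives $A\le C\,B^{-2}$ as positive operators, where $B^{-2}$ is a positive \emph{elliptic} generalized Toeplitz operator of order $-2q$. The Courant--Fischer min-max principle then yields $\lambda_k(A)\le C\,\lambda_k(B^{-2})$ for every $k$. Now Weyl's law for positive elliptic generalized Toeplitz operators \cite[Theorem~1]{BG81} gives $\lambda_k(B^{-2})=O(k^{-2q/d})$ as $k\to\infty$, with $d=\text{dim}_{\mathbb{C}}\hspace{0.1em}\Omega$ (this being the Weyl exponent attached to the $2d$-dimensional symplectic cone $\Sigma$). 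Hence $s_k(T_Q)=\lambda_k(A)^{1/2}=O(k^{-q/d})$, so $\sum_k s_k(T_Q)^p\le C'\sum_k k^{-pq/d}<\infty$ exactly when $pq/d>1$, i.e.\ $p>d/q=\text{dim}_{\mathbb{C}}\hspace{0.1em}\Omega/q$, which is the assertion.

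The main difficulty is precisely the lack of ellipticity of $A=T_Q^*T_Q$: one cannot simply write down its eigenvalue asymptotics. Conjugating by the fixed invertible elliptic $B$ of complementary order is exactly what converts the bound into an eigenvalue estimate for the genuinely elliptic $B^{-2}$, to which Weyl's law applies; everything else is order bookkeeping with (P3)--(P4) and the min-max principle. The only technical point requiring a little care is the existence and the symbol-calculus properties of the real power $B=\Lambda_0^{-q}$ (restricted to the Hardy space), but this is covered by the pseudodifferential functional calculus already invoked in the paper.
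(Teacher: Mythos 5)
Your argument is correct and follows the same route the paper indicates for this lemma (it is quoted from \cite{EnE15} and justified there exactly by Weyl's law for positive elliptic generalized Toeplitz operators, \cite[Theorem 1]{BG81}, combined with an auxiliary invertible elliptic operator of complementary order): the order bookkeeping, the Weyl exponent $d=\text{dim}_{\mathbb{C}}\,\Omega$ coming from the $2d$-dimensional cone $\Sigma$, and the construction of $B$ from $\Lambda_0^{-1}$ are all as in the source. The only remark worth making is that the min-max/conjugation step can be shortcut by writing $T_Q=(T_QB)\,B^{-1}$ with $T_QB$ bounded and $B^{-1}\in\mathcal{L}^p$ for $p>d/q$ by Weyl's law, and invoking the ideal property of the Schatten classes.
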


We now state the following result about the restriction operators and the Szeg\"{o} projection.
\begin{lem}\cite{EnE15}\label{eTr} For each $i=1,\cdots,m,$ $R_{\partial V_i}\varPi=\varPi_{V_i}R_{\partial V_i} \varPi$ is an elliptic Fourier integral operator from $\partial\Omega$ to $ \partial\Omega\cap V_i$ of order $k_i/{2}.$ Moreover, if $T$ is a generalized Toeplitz operator on $\partial\Omega$ of order $s \in\mathbb{R}$, then $(R_{\partial V_i}\varPi)T (R_{\partial V_i}\varPi)^\ast$ is a generalized Toeplitz operator on $\partial\Omega\cap V_i$ of order $s + k_i$, which is elliptic if $T$ is.
\end{lem}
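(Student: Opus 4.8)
The plan is to reduce the statement to a microlocal one about restricting Hardy functions, and then to invoke the Boutet de Monvel--Guillemin calculus of Hermite Fourier integral operators. First I would verify that $V_{i,\Omega}$ is a relatively compact smooth strongly pseudoconvex domain inside the complex submanifold $V_i$: transversality of $V_i$ with $\partial\Omega$ forces $\textbf{d}\rho|_{T_yV_i}\neq\bm{0}$ for $y\in\partial\Omega\cap V_i$, so $\rho_{V_i}=\rho|_{V_i}$ is a genuine defining function, and strict plurisubharmonicity passes to $\rho|_{V_i}$ because its complex Hessian is the restriction of that of $\rho$ to $T^{1,0}V_i$. Thus $\varPi_{V_i},K_{V_i},\gamma_{V_i}$ and the symplectic cone $\Sigma_{V_i}=\{(y,t\nu_{V_i,y}):t>0\}$ are defined, and $\rho_{V_i}=\rho|_{V_i}$ forces $\nu_{V_i}=\iota^*\nu$, where $\iota:\partial\Omega\cap V_i\hookrightarrow\partial\Omega$ is the inclusion. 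On the dense subspace of $\mathcal{O}^0(\partial\Omega)$ of traces $\gamma F$ with $F$ holomorphic on a neighborhood of $\overline\Omega$, one has $K\gamma F=F|_{\overline\Omega}$ by (\ref{krps}), so $R_{\partial V_i}\gamma F=\gamma_{V_i}(F|_{V_i})=\iota^*\gamma F$ is the boundary trace of a function holomorphic near $\overline{V_{i,\Omega}}$ and hence lies in $\mathcal{O}^0(\partial\Omega\cap V_i)$, on which $\varPi_{V_i}$ acts as the identity. Since $R_{\partial V_i}$ is continuous on the Hardy scale --- it factors as $\gamma_{V_i}\circ R_{V_{i,\Omega}}\circ K$ through the bounded restriction map of this section and the unitaries $K,\gamma_{V_i}$ of Lemma \ref{phda} --- and $\mathcal{O}^0(\partial\Omega\cap V_i)$ is closed, the identities $R_{\partial V_i}\varPi=\varPi_{V_i}R_{\partial V_i}\varPi=\varPi_{V_i}\iota^*\varPi$ follow first on this dense subspace and then on all of $L^2(\partial\Omega)$.

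\emph{Fourier integral structure and order.} By the Boutet de Monvel--Guillemin theory \cite{Bou79,BG81}, $\varPi$ and $\varPi_{V_i}$ are Hermite Fourier integral operators microlocally concentrated on the symplectic cones $\Sigma$ and $\Sigma_{V_i}$, while $\iota^*$ is a Fourier integral operator with canonical relation $C=\{((y,\iota^*\xi),(y,\xi)):y\in\partial\Omega\cap V_i,\ \xi\in T^*_y(\partial\Omega)\}$. I would then compute $C\circ\Sigma$: a covector $(y,t\nu_y)\in\Sigma$ over $y\in\partial\Omega\cap V_i$ is carried by $C$ to $(y,t\,\iota^*\nu_y)=(y,t\nu_{V_i,y})\in\Sigma_{V_i}$, and transversality of $V_i$ with $\partial\Omega$ is exactly the hypothesis that makes this composition clean --- it forces $\rho_{V_i}=\rho|_{V_i}$ to be a defining function, so $\nu_{V_i}$ never vanishes and the direction $\nu_y$ is non-characteristic for $\iota^*$ along $\Sigma$ --- so that $\chi:(y,t\nu_y)\mapsto(y,t\nu_{V_i,y})$ is a homogeneous symplectomorphism of the two cones. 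The Boutet de Monvel--Guillemin composition theorem then yields that $\varPi_{V_i}\iota^*\varPi=R_{\partial V_i}\varPi$ is a Hermite Fourier integral operator associated with the graph of $\chi$, of order $k_i/2$ (the half-integer reflecting the microlocal concentration on $\Sigma$), with nowhere-vanishing symbol --- because the amplitude of $\iota^*$ is identically $1$ and the Szeg\"o projections contribute nonvanishing symbols on their cones --- hence elliptic. Through the unitaries $\gamma,\gamma_{V_i}$ of Lemma \ref{phda}, which intertwine the Bergman--Sobolev and Hardy--Sobolev scales, the same conclusions transfer to $R_{V_{i,\Omega}}$.

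\emph{The conjugation.} Write $A=R_{\partial V_i}\varPi$, an elliptic Hermite Fourier integral operator of order $k_i/2$ quantizing $\chi$, with $A=A\varPi=\varPi_{V_i}A$; then $A^*$ is the corresponding one of order $k_i/2$ quantizing $\chi^{-1}$. Given a generalized Toeplitz operator $T$ on $\partial\Omega$ of order $s$ --- which by (P1) we may write as $T=\varPi Q\varPi$ with $Q\varPi=\varPi Q$ --- one has $ATA^*=AQA^*$, and by the Fourier integral composition calculus $AQA^*$ is a pseudodifferential operator on $\partial\Omega\cap V_i$ of order $s+k_i=\frac{k_i}{2}+s+\frac{k_i}{2}$ (the symplectomorphisms cancelling, $\chi\circ\mathrm{id}\circ\chi^{-1}=\mathrm{id}$), with principal symbol $|\sigma(A)|^2\,(\sigma(T)\circ\chi^{-1})$. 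Therefore $ATA^*=\varPi_{V_i}(AQA^*)\varPi_{V_i}$ is a generalized Toeplitz operator on $\partial\Omega\cap V_i$ of order $s+k_i$; and since $\sigma(A)$ is nowhere vanishing by the previous step and $\chi$ is a diffeomorphism of cones, its symbol is nonvanishing --- so $ATA^*$ is elliptic --- exactly when $\sigma(T)$ is.

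\emph{Main obstacle.} The delicate point is the second step: matching the geometric transversality hypothesis to the clean-composition condition of the Hermite calculus and extracting both the exact order $k_i/2$ and the non-vanishing of the composed symbol, in a complex manifold where no global model is available. I expect to handle this by passing to boundary normal coordinates in which $\partial\Omega$ and $V_i$ are simultaneously flattened --- possible precisely because of transversality --- thereby reducing to the model computation of Boutet de Monvel--Guillemin carried out over $\mathbb{C}^d$ in \cite{EnE15}, where the constancy of the amplitude of $\iota^*$ makes ellipticity transparent; the global statement then follows by a partition-of-unity patching, the symbol calculus being local while all operators act between the fixed Hilbert scales $\mathcal{O}^\bullet(\partial\Omega)$ and $\mathcal{O}^\bullet(\partial\Omega\cap V_i)$.
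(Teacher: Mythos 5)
This lemma is quoted verbatim from Engli\v{s}--Eschmeier \cite{EnE15}; the paper supplies no proof of its own beyond the citation. Your sketch reconstructs essentially the argument of that source: identify $R_{\partial V_i}\varPi$ with $\varPi_{V_i}\iota^*\varPi$, compose in the Boutet de Monvel--Guillemin Hermite calculus along the symplectomorphism $\Sigma|_{\partial\Omega\cap V_i}\to\Sigma_{V_i}$ induced by $\nu_{V_i}=\iota^*\nu$, and read off order and ellipticity; this is the standard route and your outline is correct. The only places where you assert rather than derive are the exact Hermite-order bookkeeping giving $k_i/2$ (consistent with, e.g., $AA^*$ having order $k_i$, and correctly flagged by you as the crux to be settled in the local model) and the identification of $\varPi_{V_i}(AQA^*)\varPi_{V_i}$ --- a Hermite FIO associated to $\mathrm{diag}\,\Sigma_{V_i}$ rather than a genuine pseudodifferential operator --- as a generalized Toeplitz operator, which is exactly the characterization in \cite{BG81}.
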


It follows from  (\ref{norsm}) and (\ref{norms}) that there exists a  positive elliptic generalized Toeplitz operator $T_{Y}$  of order  $2s -1$ on $\partial\Omega$ such that the inner product in $ \mathcal{O}^{s}(\overline{\Omega})$  is induced by  $$\langle f,g\rangle_ {\mathcal{O}^{s}(\overline{\Omega})}=\langle T_{Y}\gamma f,\gamma g\rangle_ {\mathcal{O}^{0}(\partial\Omega)}.$$  Similarly,  there exists a positive elliptic generalized Toeplitz operator of order $2s-k_i-1$ on $ \partial\Omega\cap V_i$ such that the inner product in $ \mathcal{O}^{s-k_i/2}(\overline{\Omega}\cap V_i)$ is induced by  \begin{equation}\label{Tiin}\langle f,g\rangle_ {\mathcal{O}^{s-k_i/2}(\overline{\Omega}\cap V_i)}=\langle T_{X_i}\gamma_{V_i}f,\gamma_{V_i}g\rangle_ {\mathcal{O}^{0}(\partial\Omega\cap V_i)}.\end{equation}   
Denote by $T=(T_1,\cdots,T_m)^{\prime}$ the column block matrix of operators with \begin{equation}\label{Tij}T_i=T_{X_i}^{\frac{1}{2}}\gamma_{V_i}R_{V_i}=T_{X_i}^{\frac{1}{2}}R_{\partial V_i}\gamma:  \mathcal{O}^{s}(\overline{\Omega})\rightarrow \mathcal{O}^{0}(\partial\Omega\cap {V_i}),\end{equation} $i=1,\cdots,m.$ Clearly,  $T$ maps $\mathcal{O}^{s}(\overline{\Omega})$ into $\mathcal{O}^{0}_V:=\bigoplus_{i=1}^m \mathcal{O}^{0}(\partial\Omega\cap {V_i}),$ where the element in $\bigoplus_{i=1}^m \mathcal{O}^{0}(\partial\Omega\cap V_i)$ is viewed as the column block matrix of functions.

\begin{prop}\label{tfij} Suppose as above.\begin{enumerate}
\item  $T_i$ is right semi-Fredholm for $i=1,\cdots,m.$
\item  There exists an elliptic generalized Toeplitz operator $T_{Q_i}$ on $\partial\Omega\cap V_i$ of order $-2s+k_i+1$ such that
$$T_iT_i^\ast=T_{X_i}^{\frac{1}{2}}T_{Q_i}T_{X_i}^{\frac{1}{2}}$$ for $i=1,\cdots,m.$
\item If  $V_i\cap V_j \subset \Omega$  whenever $1\leq i\neq j\leq m, $ then $T_iT_j^\ast$  is a smoothing operator from $\mathcal{O}^{0}(\partial\Omega\cap V_j)$ to $\mathcal{O}^{0}(\partial\Omega\cap V_i).$
\end{enumerate}
\begin{proof}  (1) $T_{X_i}^{\frac{1}{2}}$ is Fredholm by (P6), $R_{V_i}$ is right semi-Fredholm, and $\gamma_{V_i}$ is invertible. The desired result is immediate from the following fact: Let $H_i,i=1,\cdots,4,$ be Hilbert spaces and  $B_i: H_i\rightarrow H_{i+1},i=1,2,3,$ are bounded linear operators.  If   ~$B_1, B_3$ are right semi-Fredholm and $B_2$ has bounded inverse, then the operator $B_3B_2B_1:H_1\rightarrow H_4$  is right semi-Fredholm.
 
Indeed, by \cite[Thorem XI.2.3]{Coy90} and its remark, it follows that there exist bounded operators $B_i^\prime: H_{i+1}\rightarrow H_{i}$ and finite rank operators $F_i: H_{i+1}\rightarrow H_{i+1},i=1,3,$ such that $$B_iB_i^\prime=\text{Id}_{H_{i+1}}+F_i,\quad i=1,3.$$
Thus $$(B_3B_2B_1)(B_1^\prime B_2^{-1}B_3^\prime)=\text{Id}_{H_4}+F_3+B_3B_2F_1B_2^{-1}B_3^\prime.$$
By \cite[Thorem XI 2.3]{Coy90} again, this implies that $B_3B_2B_1$ is right semi-Fredholm. 

(2) For any $u\in  \mathcal{O}^{0}(\partial\Omega\cap V_i)$ and $v\in\mathcal{O}^{s}(\overline{\Omega}),$ \begin{equation}\begin{split}\label{}
<T_i^\ast u,v>_ {\mathcal{O}^{s}(\overline{\Omega})}&=< u,T_iv>_ { \mathcal{O}^{0}(\partial\Omega\cap V_i)}\\ \notag
&=<u,T_{X_i}^{\frac{1}{2}}\gamma_{V_i}R_{V_i}v>_ { \mathcal{O}^{0}(\partial\Omega\cap V_i)}\\
&=<u,T_{X_i}^{\frac{1}{2}}R_{\partial V_i}\gamma v>_ { \mathcal{O}^{0}(\partial\Omega\cap V_i)}\\
&=<u,T_{X_i}^{\frac{1}{2}}R_{\partial V_i}\varPi\gamma v>_ { \mathcal{O}^{0}(\partial\Omega\cap V_i)}\\
&=<(R_{\partial V_i}\varPi)^\ast T_{X_i}^{\frac{1}{2}}u,\gamma v>_ { \mathcal{O}^{0}(\partial\Omega)}.\\
 \end{split}
 \end{equation} 
On the other hand, $$<T_i^\ast u,v>_ {\mathcal{O}^{s}(\overline{\Omega})}=<T_{Y}\gamma T_i^\ast u,\gamma v>_ { \mathcal{O}^{0}(\partial\Omega)}.$$ Hence, $T_i^\ast =KT_{Y_i}^{-1}(R_{\partial V_i}\varPi)^\ast T_{X_i}^{{1}/{2}}$ and $$ T_i T_i^\ast=T_{X_i}^{\frac{1}{2}}(R_{\partial V_i}\varPi )T_{Y}^{-1}(R_{\partial V_i}\varPi)^\ast T_{X_i}^{{1}/{2}},i=1,\cdots,m.$$
Put $T_{Q_i}:= (R_{\partial V_i}\varPi) T_{Y_i}^{-1}(R_{\partial V_i}\varPi)^\ast,i=1,\cdots,m .$  It follows from Lemma \ref{eTr} that $T_{Q_i}$ is an elliptic generalized Toeplitz operator of order $-2s+k_i+1$ for each $1\leq i\leq m,$ which leads to the desired.

(3) It follows from \cite[Theorem 1.5]{BS81} that the Szeg\"o kernel $S(x,y)$ of the compact strongly pseudoconvex boundary $\partial{\Omega}$ is a Fourier integral distribution of order zero, namely  there exists an  elliptic symbol $a\in S^{d-1}(\overline{\Omega}\times\overline{\Omega}\times\mathbb{R})$ so that $$S(x,y)=\int_0^\infty e^{\sqrt{-1} \rho(x,y) t}a(x,y,t)dt,$$
where $\rho(x,y)$ is called  the almost analytic extension of the defining function $\rho$ and is determined by the following properties: $\rho(x,x)=\frac{1}{\sqrt{-1}}\rho(x),\rho(x,y)=-\overline{\rho(y,x)},$  and  $\bar{\partial}_x\rho(x,y), {\partial}_y\rho(x,y)$ vanish to infinite order along the diagonal. Thus the Szeg\"o projection $\varPi: L^2(\partial{\Omega}) \rightarrow \mathcal{O}^0(\partial{\Omega})$ is an elliptic Fourier integral operator of order zero with complex-valued phase function.  Note from  (\ref{reci}) that the operator $R_{\partial V_i}$ is the restriction of the operator $R_{V_i}$ to the boundary $\partial{\Omega}\cap V_i$, $i=1,\cdots,m.$ Combining the fact that the restriction operator $R_{V_i}$ maps holomorphic functions on $\overline{\Omega}$ to holomorphic functions on the submanifold $V_i,$ it follows that the Schwartz kernel of $R_{\partial V_i}\varPi$ is the restriction with respect to the variable $x,$
$$R_{\partial V_i} S(x,y)=\int_0^\infty e^{\sqrt{-1} \rho(x,y) t}a(x,y,t)dt,\quad x \in\partial{\Omega}\cap V_i, y\in  \partial{\Omega}.$$
Thus $$(R_{\partial V_i}\varPi)f(x)=\int_{\partial{\Omega}} S(x,y)f(y)i(w^d),$$ and 
$$(R_{\partial V_i}\varPi)^\ast l(y)=\int_{\partial{\Omega}\cap V_i} S(y,x)l(x)i(w^d),$$ where $i(w^d)$ is the volume form on $\partial{\Omega}$ naturally induced from $(W, w^d).$
Combined with Fubini's theorem, it follows that 
$$T_Y^{-1} (R_{\partial V_j}\varPi)^\ast l(y)= \int_{\partial{\Omega}\cap V_j} T_Y^{-1} S(y,x)l(x)i(w^d). $$
Then $$ (R_{\partial V_i}\varPi)T_Y^{-1} (R_{\partial V_j}\varPi)^\ast l(y)= \int_{x\in \partial{\Omega}\cap V_j} l(x)\int_{u\in\partial{\Omega}} S(y,u) T_Y^{-1} S(u,x)i(w^d)i(w^d), $$ where $T_Y^{-1} $ acts on the $u$ variable. 
By the reproducing property of the Szeg\"o kernel, it follows that the  Schwartz kernel of $ (R_{\partial V_i}\varPi)T_Y^{-1} (R_{\partial V_j}\varPi)^\ast$ is  $T_{Y}^{-1}S(y,x),$  where $T_{Y}^{-1}$ acts on the $y$ variable. Hence, the Szeg\"o kernel of $$ T_iT_j^\ast = T_{X_i}^{\frac{1}{2}}(R_{\partial V_i}\varPi )T_{Y}^{-1}(R_{\partial V_i}\varPi)^\ast T_{X_i}^{{1}/{2}} $$ is \begin{equation}\label{swzk} T_{X_i}^{\frac{1}{2}} (T_{X_j}^{\frac{1}{2}}\otimes \text{Id}_{\overline{\mathcal{O}^0(\partial{\Omega}\cap V_j)}})T_{Y}^{-1}\hspace{0.1em} S(y,x) \end{equation}  
  for $y\in \partial{\Omega}\cap V_j$ and $ x\in \partial{\Omega}\cap V_i,$ where $T_{X_i}^{\frac{1}{2}}, T_{Y}^{-1}$  act on the $u$ variable, $T_{X_j}^{\frac{1}{2}}, \text{Id}_{\overline{\mathcal{O}^0(\partial{\Omega}\cap V_j)}}$ act on the $x$ variable, and the tensor product  $T_{X_j}^{\frac{1}{2}}\otimes\text{Id}_{\overline{\mathcal{O}^0(\partial{\Omega}\cap V_j)}}$ act on the Hilbert space $\mathcal{O}^0(\partial{\Omega}\cap V_j) \otimes\overline{\mathcal{O}^0(\partial{\Omega}\cap V_j)}$ in the usual sence, $\overline{\mathcal{O}^0(\partial{\Omega}\cap V_j)}$ is the Hilbert space consisting of conjugate functions of $\mathcal{O}^0(\partial{\Omega}\cap V_j).$
Note that the Szeg\"o kernel $S(y,x)$ is smooth on $(\partial{\Omega}\cap V_j)\times(\partial{\Omega}\cap V_i)$ whenever $i\neq j,$ since 
 $(\partial{\Omega}\cap V_j)\cap(\partial{\Omega}\cap V_i)=\emptyset.$
 It then implies from (\ref{swzk}) that the Schwartz kernel of $T_iT_j^\ast$ is smooth whenever $i\neq j.$  This completes the proof.
\end{proof}

\begin{cor}\label{frlm} Let $D_T$ be the diagonal operator matrix $\text{diag} \hspace{0.1em} (T_1 T_1^\ast,\cdots,T_mT_m^\ast)$ on $\bigoplus_{i=1}^m \mathcal{O}^{0}(\partial\Omega\cap V_i),$ then $D_T$ is a Fredholm operator.
\end{cor}
\end{prop}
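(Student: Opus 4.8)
The plan is to prove that each diagonal block $T_iT_i^\ast$ is itself a Fredholm operator on $\mathcal{O}^0(\partial\Omega\cap V_i)$, and then to conclude by the elementary fact that a finite block-diagonal operator whose diagonal entries are Fredholm is Fredholm. First I would pin down the order and symbol of $T_iT_i^\ast$ by means of Proposition \ref{tfij}(2), which gives $T_iT_i^\ast=T_{X_i}^{1/2}T_{Q_i}T_{X_i}^{1/2}$. Here $T_{X_i}$ is the \emph{positive} elliptic generalized Toeplitz operator of order $2s-k_i-1$ appearing in (\ref{Tiin}); by the Seeley-type functional calculus for positive elliptic generalized Toeplitz operators (the same device already used to make sense of $T_{X_i}^{1/2}$ in (\ref{Tij})), $T_{X_i}^{1/2}$ is a positive elliptic generalized Toeplitz operator of order $\tfrac{1}{2}(2s-k_i-1)$ with symbol $\sigma(T_{X_i})^{1/2}$. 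Since $T_{Q_i}$ is elliptic of order $-(2s-k_i-1)$ by Proposition \ref{tfij}(2), property (P3) shows that $T_iT_i^\ast$ is a generalized Toeplitz operator of order $\tfrac{1}{2}(2s-k_i-1)-(2s-k_i-1)+\tfrac{1}{2}(2s-k_i-1)=0$, whose symbol on the half-line bundle over $\partial\Omega\cap V_i$ equals $\sigma(T_{X_i})\sigma(T_{Q_i})$, a product of nonvanishing functions; hence $T_iT_i^\ast$ is an \emph{elliptic} generalized Toeplitz operator of order $0$.

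Granted this, property (P6) supplies a parametrix $T_{P_i}$ of order $0$ with $T_iT_i^\ast T_{P_i}-I$ and $T_{P_i}T_iT_i^\ast-I$ smoothing; a smoothing operator has a $C^\infty$ Schwartz kernel on $(\partial\Omega\cap V_i)\times(\partial\Omega\cap V_i)$, hence sends $\mathcal{O}^0(\partial\Omega\cap V_i)$ boundedly into every $\mathcal{O}^N(\partial\Omega\cap V_i)$ and is therefore compact on $\mathcal{O}^0(\partial\Omega\cap V_i)$ by Rellich's lemma. Thus $T_iT_i^\ast$ is invertible modulo compacts, i.e.\ Fredholm on $\mathcal{O}^0(\partial\Omega\cap V_i)$, for each $i=1,\dots,m$. (If one prefers to sidestep the symbol calculus, Proposition \ref{tfij}(1) already suffices: $T_i$ has closed range of finite codimension, so both $T_i|_{(\ker T_i)^\perp}$ and $T_i^\ast|_{(\ker T_i^\ast)^\perp}$ are bounded below; chaining these two estimates shows the bounded self-adjoint operator $T_iT_i^\ast$ is bounded below on $(\ker T_i^\ast)^\perp$, so it has finite-dimensional kernel and closed range, hence is Fredholm.)

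Finally, $D_T=\operatorname{diag}(T_1T_1^\ast,\dots,T_mT_m^\ast)$ is the orthogonal direct sum of the operators $T_iT_i^\ast$ on $\bigoplus_{i=1}^m\mathcal{O}^0(\partial\Omega\cap V_i)$, so $\ker D_T=\bigoplus_i\ker(T_iT_i^\ast)$ and $\operatorname{coker}D_T=\bigoplus_i\operatorname{coker}(T_iT_i^\ast)$ are finite-dimensional and $D_T$ has closed range; therefore $D_T$ is Fredholm, with $\operatorname{ind}D_T=\sum_{i=1}^m\operatorname{ind}(T_iT_i^\ast)$. The computation is short, and the only point demanding care is the order-and-ellipticity bookkeeping in the first paragraph — chiefly checking that $T_{X_i}^{1/2}$ is a bona fide generalized Toeplitz operator of half the order and that the three orders cancel to exactly $0$ — which is routine once the Boutet de Monvel–Guillemin calculus and the Seeley functional calculus recalled in Section 2 are in hand.
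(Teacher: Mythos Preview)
Your proof is correct and follows essentially the same route the paper intends: the Corollary is stated without proof, but the surrounding text (and the use of $D_T$ in the proof of Proposition~\ref{pnor}, where each $T_iT_i^\ast$ is treated as an elliptic generalized Toeplitz operator of order $0$) makes clear that the intended argument is exactly yours---use part~(2) of Proposition~\ref{tfij} together with (P3) to see that $T_iT_i^\ast=T_{X_i}^{1/2}T_{Q_i}T_{X_i}^{1/2}$ is elliptic of order $0$, invoke (P6) for a parametrix, and then pass to the block-diagonal. Your parenthetical alternative via part~(1) (self-adjointness plus the right semi-Fredholm property of $T_i$) is also valid and slightly more elementary, though the paper does not take that path.
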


 \section{The Proof of Theorem \ref{main}}
  This section is devoted to the proof of Theorem \ref{main}. We now explain briefly the main ideas of the proofs. Our proof is inspired by the method due to Engli\v{s} and Eschmeier \cite{EnE15} to handle the case where a homogeneous algebraic variety with only one isolate singularity can be resolved by only one blowing-up.  But, in general, the singularity of an analytic subvariety is not isolated, or it can not be resolved by only one blowing-up even in the algebraic case with only one isolated singularity. However, Hironaka's embedded desingularization theorem \cite{AHV18, Hir64} shows that the singularities of an analytic subvariety can always be resolved by finitely many permissible blowing-ups locally along suitable smooth centers contained in the singularities. Note that the strong pseudoconvexity of the boundary preserves under the permissible blowing-up in the case of compact singularity, it reduces to the smooth case by proving that the  $p$-essential normality is preserved invariant under permissible blowing-ups.  We first deal with the case where $V$ is smooth. 
 
 \begin{prop}\label{pnor}  Suppose that $V$ is smooth and has no compact irreducible components, then the   Bergman-Sobolev quotient submodule  $M_V^\bot\subset \mathcal{O}^{s}(\overline{\Omega})$ is  $p$-essentially normal for all $p>\text{dim}_{\mathbb{C}}\hspace{0.1em} (V\cap\Omega).$
 \end{prop}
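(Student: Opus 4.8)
The plan is to realize $M_V^\bot$ through a carefully chosen \emph{unitary} (not merely bounded-invertible) intertwiner as a Hardy-type space on the boundary of $V$, where the compressed module operators become generalized Toeplitz operators; $(p;1)$-essential normality then drops out of the Boutet--Guillemin symbol calculus together with the Weyl law (Lemma \ref{lpq}). Write $V=\bigcup_{i=1}^m V_i$ as the union of its distinct irreducible components, each a noncompact complex submanifold of codimension $k_i$ meeting $\partial\Omega$ transversally. Since $V$ is smooth near $\partial\Omega$, two distinct components cannot meet on $\partial\Omega$ (a small neighbourhood in $V$ of such a point would be a connected submanifold, forcing $V_i=V_j$), so $V_i\cap V_j\subset\Omega$ for $i\neq j$ and the boundaries $\partial\Omega\cap V_i$ are pairwise disjoint; thus $Y:=\bigsqcup_i(\partial\Omega\cap V_i)$ is a compact strongly pseudoconvex CR manifold with Hardy space $\mathcal O^0_V=\bigoplus_i\mathcal O^0(\partial\Omega\cap V_i)$ and block-diagonal Szeg\"o projection. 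With $T=(T_1,\dots,T_m)'$ from (\ref{Tij}) one has $M_V=\ker T$; by Proposition \ref{tfij}(2) the diagonal entries $T_iT_i^*$ are elliptic, positive, order-$0$ generalized Toeplitz operators, by Proposition \ref{tfij}(3) the off-diagonal ones are smoothing, so $G:=TT^*$ is an elliptic positive order-$0$ generalized Toeplitz operator on $Y$ which, by Corollary \ref{frlm}, is Fredholm; in particular $\operatorname{Ran}T$ is closed and equals $(\ker G)^\perp$.

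Modifying $G$ on the finite-dimensional $\ker G$ to an invertible, elliptic, positive, order-$0$ generalized Toeplitz operator $\widetilde G$ with $\widetilde G-G$ of finite rank, the operators $\widetilde G^{\pm1/2}$ are again order-$0$ generalized Toeplitz operators on $Y$ by functional calculus. Set $\Psi:=\widetilde G^{-1/2}T$; using $G=TT^*$ and $\operatorname{Ran}T=(\ker G)^\perp$ one verifies $\Psi^*\Psi=P_{M_V^\bot}$ and $\Psi\Psi^*=P_{(\ker G)^\perp}$, so $\Psi$ restricts to a unitary from $M_V^\bot$ onto $(\ker G)^\perp$.

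For $f\in\mathcal O(\overline\Omega)$ boundary values multiply, so $\gamma_{V_i}R_{V_i}M_f=T^{(i)}_f\gamma_{V_i}R_{V_i}$, where $T^{(i)}_f$ is the order-$0$ Toeplitz operator on $\mathcal O^0(\partial\Omega\cap V_i)$ with symbol $f|_{\partial\Omega\cap V_i}$; hence $TM_f=\widetilde{\mathcal T}_fT$ with $\widetilde{\mathcal T}_f:=\operatorname{diag}\!\big(T_{X_i}^{1/2}T^{(i)}_fT_{X_i}^{-1/2}\big)$, an order-$0$ generalized Toeplitz operator on $Y$. Since the principal symbols are scalar, (P3) and (P5) give that $[T_{X_i}^{1/2},T^{(i)}_f]$ has order at most $\operatorname{ord}T_{X_i}^{1/2}-1$, so $\widetilde{\mathcal T}_f$ differs from the Toeplitz operator on $Y$ of symbol $f|_Y$ by an operator of order $\le-1$. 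Because $M_f$ preserves $M_V$, the compression $S_f$ of $M_f$ to $M_V^\bot$ satisfies $\Psi S_f\Psi^*=\Psi M_f\Psi^*=\widetilde G^{-1/2}\widetilde{\mathcal T}_f\widetilde G^{1/2}$ modulo finite rank; conjugating by $\widetilde G^{\pm1/2}$ likewise costs at most order $-1$, so $\Psi S_f\Psi^*$ equals the Toeplitz operator on $Y$ of symbol $f|_Y$ up to an operator of order $\le-1$ plus a finite-rank operator.

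Now $[S_f,S_g^*]$ is unitarily equivalent to $[\Psi S_f\Psi^*,(\Psi S_g\Psi^*)^*]$; since $f|_Y$ and $\overline{g|_Y}$ commute, (P3) and (P5) give that the commutator of the associated Toeplitz operators has order $\le-1$, and hence $[\Psi S_f\Psi^*,(\Psi S_g\Psi^*)^*]$ is a generalized Toeplitz operator on $Y$ of order $\le-1$ modulo finite rank. On each component $\partial\Omega\cap V_i$, whose complex dimension is $\dim_{\mathbb C}V_i\le\dim_{\mathbb C}(V\cap\Omega)$, Lemma \ref{lpq} puts such an operator in $\mathcal L^p$ for all $p>\dim_{\mathbb C}(V\cap\Omega)$; as finite-rank operators lie in every $\mathcal L^p$, we conclude $[S_f,S_g^*]\in\mathcal L^p$ for all $f,g\in\mathcal O(\overline\Omega)$ and all $p>\dim_{\mathbb C}(V\cap\Omega)$, as required. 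The crux — and the point where one must work — is the unitarization: the restriction map $R_{V_\Omega}$ is only bounded-invertible onto its (closed) range, not isometric, and by the example recalled after Proposition \ref{rpdu}, $p$-essential normality is destroyed by a general change to an equivalent norm; the functional calculus of $TT^*$ repairs the norm while perturbing the transported module operators only by generalized Toeplitz operators of order $-1$, exactly the margin afforded by the Weyl law. The remaining checks (that every conjugation and functional-calculus step stays within the generalized Toeplitz calculus with the claimed orders, and the bookkeeping around the finite-dimensional $\ker TT^*$) are routine Boutet--Guillemin calculus.
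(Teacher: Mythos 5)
Your proposal is correct and follows the same overall strategy as the paper's proof: reduce to the boundary Hardy spaces of the components via $T=(T_1,\dots,T_m)'$, use that $TT^\ast$ is an elliptic order-zero generalized Toeplitz operator modulo smoothing (hence Fredholm), transport $S_f$ to an order-zero generalized Toeplitz operator whose symbol is scalar, and invoke (P3), (P5) and Lemma \ref{lpq} to place the commutators in $\mathcal L^p$ for $p>\dim_{\mathbb C}(V\cap\Omega)$. The one genuine point of divergence is the device used to cope with the fact that $T$ is not isometric. The paper keeps the bounded-invertible restriction $\tau=T|_{M_V^\bot}$, proves the reproducing identity $\langle P_{M_V^\bot}u,v\rangle=\langle G^{-1}Tu,Tv\rangle$ with $G=\operatorname{diag}(\tau\tau^\ast,I_{(\operatorname{Ran}T)^\bot})$, writes $S_f=T^\ast G^{-1}TM_f=\tau^\ast\mathcal T_f\tau+C_f$, and replaces $G^{-1}$ by a parametrix $\tilde T$ of $D_T$ modulo $\bigcap_{p>0}\mathcal L^p$; the Schatten estimates are then read off from $\tau^\ast(\cdot)\tau$ with $\tau$ merely bounded. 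You instead build the genuine unitary $\Psi=\widetilde G^{-1/2}T:M_V^\bot\to(\ker TT^\ast)^\perp$ and conjugate, which makes the preservation of Schatten norms immediate and avoids the explicit parametrix bookkeeping; the price is that you must justify that $\widetilde G^{\pm1/2}$ stay in the generalized Toeplitz class, which holds because $\ker TT^\ast$ consists of smooth functions by elliptic regularity (so the finite-rank modification is smoothing) and square roots of positive invertible elliptic order-zero generalized Toeplitz operators are again such operators. Your verification that distinct components cannot meet on $\partial\Omega$ (so that Proposition \ref{tfij}(3) applies) is a point the paper uses implicitly; making it explicit is a small but genuine improvement. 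Both routes deliver exactly the same conclusion with the same exponent.
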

 \begin{proof} Keeping notations in Section \ref{ser}, we first prove that $$M_V=\text{Ker}\hspace{0.1em} T= \bigcap_{i=1}^m \text{Ker}\hspace{0.1em} T_i.$$ It suffices to show that $\text{Ker}\hspace{0.1em} T_i=\text{Ker}\hspace{0.1em} R_{V_i}=\{f\in  \mathcal{O}^{s}(\overline{\Omega}): f|_{V_i\cap\Omega}=\bm{0}\}$ by Remark \ref{KRV}, $i=1,\cdots,m.$ Indeed, this is immediate from (\ref{Tij}) and the equivalent inner product (\ref{Tiin}) in $\mathcal{O}^{s-k_i/2}(\partial\Omega\cap V_i).$ On the other hand, the $m\times m$ operator matrix $$TT^\ast: \mathcal{O}^{0}_V\rightarrow \mathcal{O}^{0}_V $$ can be written as
 \begin{equation}\begin{split}\label{mtx} TT^\ast&=\begin{bmatrix}
 T_1 T_1^\ast & T_1T_2^\ast&\cdots&T_1T_m^\ast\\
 T_2 T_1^\ast & T_2T_2^\ast&\cdots&T_2T_m^\ast\\
 \vdots&\vdots&\ddots&\vdots\\
 T_m T_1^\ast & T_mT_2^\ast&\cdots&T_mT_m^\ast\\
 \end{bmatrix}\\
 &\overset{\text{def}}{=} D_T+C,
 \end{split}
 \end{equation}
 where $D_T=\text{diag} \hspace{0.1em} (T_1 T_1^\ast,\cdots,T_mT_m^\ast)$ is the diagonal operator matrix and $C=TT^\ast-D_T.$
Note that $T_iT_j^\ast$  is smoothing whenever $i\neq j$ from Proposition \ref{tfij} (3),  combining with (P4) and Lemma \ref{lpq},  this implies that $C$ is compact.
Since $D_T$ is Fredholm by Corollary \ref{frlm}, it follows from (\ref{mtx}) that $TT^\ast$ is also Fredholm. Then there exists a continuous operator $B$ and a finite rank operator $F$ on $\mathcal{O}^{0}_V$ satisfying $ BTT^\ast =\text{Id}+F$ where $\text{Id}$ is the identity operator on $\mathcal{O}^{0}_V,$  thus $T^\ast$ is left semi-Fredholm by \cite[Theorem 2.3]{Coy90},
which means that $T$ is right semi-Fredholm as we mentioned before. It follows that its range $\text{Ran} \hspace{0.1em} T$  is closed in $\mathcal{O}^{0}_V$ and has finite codimension.  Thus the restriction $\tau$ of $T$ to $M_V^\bot=(\text{Ker}\hspace{0.1em} T)^\bot$ maps onto  the closed subspace $\text{Ran} \hspace{0.1em} T\subset \mathcal{O}^{0}_V.$ The Banach inverse mapping  theorem implies that $\tau:M_V^\bot\rightarrow\text{Ran} \hspace{0.1em} T$ is invertible. Denote $G$ by the diagonal operator matrix $ \text{diag} \hspace{0.1em} (\tau\tau^\ast,I_{(\text{Ran}\hspace{0.1em} T)^\bot}) $ which acts on $\mathcal{O}^{0}_V=\text{Ran}\hspace{0.1em} T\oplus (\text{Ran}\hspace{0.1em} T)^\bot=\text{Ran}\hspace{0.1em} T\times (\text{Ran}\hspace{0.1em} T)^\bot,$ where $I_{(\text{Ran}\hspace{0.1em} T)^\bot}$ is the identity operator on the finite dimensional space $(\text{Ran}\hspace{0.1em} T)^\bot.$ Clearly, $G$ is continuous and invertible. We now claim that \begin{equation}\label{pgt}\langle P_{M_V^\bot}u,v\rangle_{\mathcal{O}^s(\overline{\Omega})}=\langle G^{-1}Tu,Tv\rangle_{\mathcal{O}^{0}(\partial\Omega)}\end{equation} for all $u,v\in \mathcal{O}^s(\overline{\Omega}).$ Indeed, both sides of (\ref{pgt}) vanish if $f$ or $g$ belong to $M_V=\text{Ker} \hspace{0.1em} T.$ Otherwise, when both $u$ and $v$ belong to $M_V^\bot=(\text{Ker} \hspace{0.1em} T)^\bot,$ thus $\tau u=Tu,\tau v=Tv\in \text{Ran}\hspace{0.1em} T,$ by the definition of $G,$ we have $$G^{-1}Tu=(\tau^\ast)^{-1}\tau^{-1}\tau u=(\tau^\ast)^{-1}u.$$
Hence, $$\langle G^{-1}Tu,Tv\rangle_{\mathcal{O}^{0}(\partial\Omega)}=\langle (\tau^\ast)^{-1}u,\tau v\rangle_{\mathcal{O}^{0}(\partial\Omega)}=\langle u,v\rangle_{\mathcal{O}^s(\overline{\Omega})}=\langle P_{M_V^\bot}u,v\rangle_{\mathcal{O}^s(\overline{\Omega})}.$$ This proves (\ref{pgt}).
Thus, for arbitrary $f\in \mathcal{O}(\overline{\Omega}),$ the compression   $$S_f= P_{M_V^\bot} M_f=T^\ast G^{-1}TM_f.$$  
Since $D_T$ is a diagonal operator matrix of elliptic generalized Toeplitz operators,   (P6) implies that there exists a diagonal operator matrix $ \tilde{T}:= \text{diag} \hspace{0.1em} (\tilde{T}_1,\cdots,\tilde{T}_m)$ of elliptic generalized Toeplitz operators $\tilde{T}_i$ with $\text{ord} \hspace{0.1em} \tilde{T}_i=-\text{ord} \hspace{0.1em} T_iT_i^\ast=0,i=1\cdots,m,$ such that  $\tilde{C}:=D_T\tilde{T}-I$ is a diagonal operator matrix of smoothing operators.
Combining (\ref{mtx}) with the definition of $G$ implies that $$G\tilde{T}=I+ \text{diag} \hspace{0.1em} (\bm{0},I_{(\text{Ran}\hspace{0.1em} T)^\bot}) \cdot\tilde{T}+C\tilde{T}+\tilde{C}.$$ Since $C,\text{diag} \hspace{0.1em} (\bm{0}, I_{(\text{Ran}\hspace{0.1em} T)^\bot})$  are finite ranks operator and  $\tilde{C}$ is a diagonal operator matrix of smoothing operators, it follows from Lemma \ref{lpq} that  $\tilde{T}$ can be written as the form of $\tilde{T}=G^{-1}+F_1,$ where $F_1 \in \mathcal{L}^{p}(\mathcal{O}^{0}_V)$ for all $p>0.$
Note that $$T_iM_f=T_{X_i}^{\frac{1}{2}}\gamma_{V_i}R_{V_i}M_f=T_{X_i}^{\frac{1}{2}}M_{f|_{\partial\Omega\cap V_i}}\gamma_{V_i}R_{V_i}=T_{X_i}^{\frac{1}{2}}M_{f|_{\partial\Omega\cap V_i}}T_{X_i}^{-\frac{1}{2}}T_i,$$ where  $i=1,\cdots,m.$  Since $f\in  \mathcal{O}(\overline{\Omega}),$ it  immediate that $M_{f|_{\partial\Omega\cap V_i}}$ identifies the classical Toeplitz operator $T_{f|_{\partial\Omega\cap V_i}}$ with the function symbol $f|_{\partial\Omega\cap V_i},$ note that the multiplication operator  $M_{f|_{\partial\Omega\cap V_i}}$ is a pseudodifferential operator of order $0.$ Thus $M_{f|_{\partial\Omega\cap V_i}}=T_{f|_{\partial\Omega\cap V_i}}$ is actually a generalized Toeplitz operator, whose order $\text{ord} \hspace{0.1em} T_{f|_{\partial\Omega\cap V_i}}\leq0$ by (P2). Then we have
\begin{equation}\begin{split}\label{}
TM_f&=(T_1M_f,\cdots,T_mM_f)^{\prime}\\ \notag
&=(T_{X_1}^{\frac{1}{2}}M_fT_{X_1}^{-\frac{1}{2}}T_1,\cdots,T_{X_m}^{\frac{1}{2}}M_fT_{X_m}^{-\frac{1}{2}}T_m)^{\prime}\\
&=(T_{X_1}^{\frac{1}{2}}T_{f|_{\partial\Omega\cap V_i}}T_{X_1}^{-\frac{1}{2}}T_1,\cdots,T_{X_m}^{\frac{1}{2}}T_{f|_{\partial\Omega\cap V_i}}T_{X_m}^{-\frac{1}{2}}T_m)^{\prime}\\
\end{split}
 \end{equation}
 is a $m\times 1$  operator matrix. Therefore, we get $$S_f=T^\ast G^{-1}TM_f=\tau^\ast  \cdot\text{diag} \hspace{0.1em} (\tilde{T}_1T_{X_1}^{\frac{1}{2}}T_{f|_{\partial\Omega\cap V_i}}T_{X_1}^{-\frac{1}{2}},\cdots,\tilde{T}_mT_{X_m}^{\frac{1}{2}}T_{f|_{\partial\Omega\cap V_i}}T_{X_m}^{-\frac{1}{2}}) \cdot\tau+C_f,$$ where $C_f \in \mathcal{L}^{p}(M_V^\bot)$ for all $p>0.$
For brevity, we denote $$\mathcal{T}_f=\text{diag} \hspace{0.1em} (\tilde{T}_1T_{X_1}^{\frac{1}{2}}T_{f|_{\partial\Omega\cap V_i}}T_{X_1}^{-\frac{1}{2}},\cdots,\tilde{T}_mT_{X_m}^{\frac{1}{2}}T_{f|_{\partial\Omega\cap V_i}}T_{X_m}^{-\frac{1}{2}}),$$  which is a diagonal operator matrix of generalized Toeplitz operators.  We now estimate the order of its entries on the main diagonal. By the addition  formula of the order in (P3), it follows that $$\text{ord} \hspace{0.1em} (\tilde{T}_iT_{X_i}^{\frac{1}{2}}T_{f|_{\partial\Omega\cap V_i}}T_{X_i}^{-\frac{1}{2}} )=\text{ord} \hspace{0.1em} \tilde{T}_i+\text{ord} \hspace{0.1em}T_{X_i}^{\frac{1}{2}}+\text{ord} \hspace{0.1em}T_{f|_{\partial\Omega\cap V_i}}+\text{ord} \hspace{0.1em}T_{X_i}^{-\frac{1}{2}} \leq 0,$$  $i=1,\cdots,m.$

For any $f,g\in \mathcal{O}(\overline{\Omega}),$ it follows from the following Lemma \ref{age} that
\begin{equation}\begin{split}\label{Sfg}
[S_f,S_g^\ast]&=[\tau^\ast\mathcal{T}_f\tau+C_f,\tau^\ast\mathcal{T}_g^\ast\tau+C_g^\ast]\\
&=\tau^\ast (\mathcal{T}_f[ D_T,\mathcal{T}_g^\ast]+[\mathcal{T}_f,\mathcal{T}_g^\ast D_T])\tau+C_{f,g}\\
&=\tau^\ast (\mathcal{T}_f[ D_T,\mathcal{T}_g^\ast]+[\mathcal{T}_f,\mathcal{T}_g^\ast]D_T+\mathcal{T}_g^\ast[\mathcal{T}_f,D_T])\tau+C_{f,g},\\
\end{split}
 \end{equation}
 where $C_{f,g} \in \mathcal{L}^{p}(M_V^\bot)$ for all $p>0.$ Since the last three commutators in (\ref{Sfg}) are diagonal operator matrixes with generalized Toeplitz operators on the main diagonal and zeroes elsewhere; combining with (P4) and (P5), it follows that the order of generalized Toeplitz operators on the main diagonal are all no more than $-1.$
 It follows from Lemma \ref{lpq} that $[S_f,S_g^\ast]\in \bigoplus_{i=1}^m  \mathcal{L}^{p_i}(\mathcal{O}^0(\partial\Omega\cap V_i))$  for all $p_j>\text{dim}_{\mathbb{C}} \hspace{0.1em} (V_i\cap \Omega),i=1,\cdots,m.$ Thus $[S_f,S_g^\ast]\in  \mathcal{L}^{p}(M_V^\bot )$ for all $$p>\max\{\text{dim}_{\mathbb{C}} \hspace{0.1em} (V_1\cap\Omega),\cdots,\text{dim}_{\mathbb{C}} \hspace{0.1em} (V_m\cap\Omega)\}=\text{dim}_{\mathbb{C}} \hspace{0.1em} (V\cap\Omega).$$ The proof is complete.
 \end{proof}

 The following identity related to the commutator is elementary, which can be immediately verified by direct calculations.
\begin{lem}\label{age} Let $\mathcal{R}$ be a ring and $U_1,U_2,U_3\in \mathcal{R},$  then the following   identity holds:
 $$ [U_1U_2,U_3]=U_1[U_2,U_3]+[U_1,U_3]U_2.$$
\end{lem}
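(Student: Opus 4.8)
The plan is to prove the identity by the standard expansion of the commutator bracket. First I would write $[U_1U_2,U_3]$ as $U_1U_2U_3-U_3U_1U_2$ using the definition $[A,B]=AB-BA$. Then I would expand the two summands on the right-hand side in the same way, obtaining $U_1U_2U_3-U_1U_3U_2$ from $U_1[U_2,U_3]$ and $U_1U_3U_2-U_3U_1U_2$ from $[U_1,U_3]U_2$. Adding these, the two copies of the term $U_1U_3U_2$ cancel, and what remains is exactly $U_1U_2U_3-U_3U_1U_2$, so the two sides agree.

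The only input is associativity of the product in $\mathcal{R}$; commutativity is never used, which is essential since in the intended application $\mathcal{R}$ is a ring of (noncommuting) operators. A cleaner way to package the same computation, which I would mention in passing, is to observe that the map $\operatorname{ad}_{U_3}:=[\,\cdot\,,U_3]$ is a derivation of $\mathcal{R}$, and the asserted formula is precisely the Leibniz rule $\operatorname{ad}_{U_3}(U_1U_2)=(\operatorname{ad}_{U_3}U_1)U_2+U_1(\operatorname{ad}_{U_3}U_2)$.

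There is no genuine obstacle here: the lemma is a purely formal identity valid in any associative ring, and the only point requiring a little care is the bookkeeping of the order of the factors, since $\mathcal{R}$ is not assumed commutative. Accordingly the proof is a one-line verification, exactly as the remark preceding the statement indicates.
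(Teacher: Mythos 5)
Your proof is correct and is exactly the direct expansion the paper has in mind (the paper simply states the identity is "immediately verified by direct calculations" and omits the details). The cancellation of the two $U_1U_3U_2$ terms is the whole content, and your remark that only associativity is used is accurate.
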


We now turn to the more general case where $V\cap \Omega \subset \Omega$ with compact singularities. According to Hironaka's embedded desingularization theorem \cite{AHV18, Hir64}, there exists a resolution map from a complex manifold $\hat{\Omega}$ to $\Omega$ such that the strict transform of $V\cap \Omega$  is smooth and satisfies additional conditions. The above resolution is called an embedded resolution, which is composed of a finite sequence of permissible blowing-up.  Thus we will first investigate the property of $p$-essential normality of  Bergman-Sobolev quotient modules under permissible blowing-up.

Now let $\eta:\check{\Omega}\rightarrow{\Omega}$ be a blowing-up along with the smooth center $E$ is called permissible if $E$ is contained in the singular locus $\text{Sing}(V).$ Note that the blowing-up map is proper and $\text{Sing}(V)$ is compact as we assumed, this implies that $\eta^{-1}(E),\eta^{-1}(\text{Sing}(V))$ and  the closure $\overline{\check{\Omega}}$ are both  compact.  Since $\text{Sing}(V)\cap \partial\Omega=\emptyset$ and $\eta: \check{\Omega}\setminus \eta^{-1}(E)\rightarrow \Omega\setminus E$ is biholomorphic, it follows that the boundary $\partial\check{\Omega}$ is also smooth strongly pseudoconvex by the biholomorphic invariance of the strong pseudoconvexity. It follows from the definition of the strict transform $ V^{\prime}\subset \check{\Omega}$ of $V$ that $\text{Sing}(V^\prime)\subset\eta^{-1}(\text{Sing}(V)),$ thus the strict transform $ V^{\prime}$ has at most compact singularities and the non-compact part of the regular part $ \text{Reg}(V^{\prime})$ transversely intersects the smooth strongly pseudoconvex boundary $\partial{\check{\Omega}}$. Clearly, the pull back $\eta^\ast(\rho)=\rho\circ\eta$ is a defining function of $\check{\Omega},$  thus $\eta^\ast(\nu)=\eta^\ast(\rho) \land(\textbf{d} \eta^\ast(\rho))^{d-1}$ is a smooth positive $1$-density on $\check{\Omega}.$ Though the pullback $\eta^\ast(\omega)$ is may not positive on $\eta^{-1}(E),$  the pullback of volume form $\eta^\ast({\omega^d})$ is still a volume form (viewed as a measure) on $\check{\Omega}.$ 
 Now we take an arbitrary Hermitian metric $\hat{w}$ on $\check{\Omega}$ and a smooth function $\theta$ with compact support in $\check{\Omega}$ such that $\theta$ is non-vanishing on the compact set $\eta^{-1}(E).$  Denote by $\check{w}= \eta^\ast(w)+\theta \hat{w}.$ Clearly, $\check{w}$ is a Hermitian metric  on $\check{\Omega}$ and
\begin{equation}\label{whhw}\check{w}^d= (\eta^\ast(w)+\theta \hat{w})^d=\eta^\ast(w)^d+\sum_{k=1}^{d} \theta^k\binom{d}{k}\eta^\ast(w)^{d-k}\wedge\hat{w}^k,\notag\\ \end{equation} where $\binom{d}{k}=\frac{d!}{k!(d-k)!}$ is the binomial coefficient. Note that the volume forms $ \theta^k\eta^\ast(w)^{d-k}\wedge\hat{w}^k$ have compact supports in $\check{\Omega},$ it follows that the induced volume form $i(\eta^\ast(w)^d)=i(\check{w}^d)$ on $\partial{\check{\Omega}},$ and hence $L^2(\partial{\check{\Omega}},i(\eta^\ast(w)^d))=L^2(\partial{\check{\Omega}},i(\check{w}^d)).$ Let $$\check{K}:L^2(\partial{\check{\Omega}},\eta^\ast(\nu)){\longrightarrow}  L^2({\check{\Omega}},\check{w}^d)$$  the related Poisson operator on the Hermitian manifold $(\check{\Omega},\check{w}).$  Since $M_{\vert \eta^\ast(\rho) \vert^{2s} } \check{K}$ is Poisson operator of order $-2s$ and $\check{K}^\ast M_{\vert \eta^\ast(\rho) \vert^{2s} } $ is a trace operator $-2s-1$ for each $s>-{1}/{2}$ by  \cite[page 257]{Bou79}, it follows that the operator  $$\check{\Lambda}_{s}:=\check{K}^\ast M_{\vert \eta^\ast(\rho) \vert^{s} } \check{K}$$  is a positive elliptic pseudodifferential operator of order $-s-1$ for  $s>-1.$ We define $\check{B}_s=F_s(\check{\Lambda}_0^{-1})$  by continuous function calculus for (unbounded) selfadjoint operators for $s\leq-1.$ Then \cite[Theorem XII.1.3]{Tay81} for functional calculus of pseudodifferential operators implies  that $\check{B}_s$ is a positive elliptic pseudodifferential operator of order $-s-1$ for every $s\leq-1.$ Similar to (\ref{norsm}), we define a family $\{\Vert\cdot\Vert_{\partial\check{\Omega},s} \}_{s\in \mathbb{R}}$ of norms on $\mathcal{O}(\partial\check{\Omega})$ as follows:
 
 \begin{equation}\label{norssm} \Vert u\Vert_{\partial\check{\Omega},s}^2= \begin{cases}
{\langle \check{\Lambda}_{-2s-1} u,u\rangle_{\partial\check{\Omega},i(\check{w}^d)}}, & s<0 ;\\   
 {\langle  u,u\rangle_{\partial\check{\Omega},i(\check{w}^d)}},&  s=0;\\
  {\langle \check{B}_{-2s-1} u,u\rangle_{\partial\check{\Omega},i(\check{w}^d)}},& s>0.\\
\end{cases}
\end{equation}

From the above, we see that $\Vert\cdot\Vert_{\partial\check{\Omega},s}$ is an  equivalent norm of $\mathcal{O}^s(\partial\check{\Omega})$ for every $s\in\mathbb{R}.$ Denote the norm $\Vert\cdot\Vert_{\check{\Omega},s}$ on $\mathcal{O}^{s}(\overline{\check{\Omega}})$ as \begin{equation}\label{normss}\Vert u\Vert_{\check{\Omega},s}=\Vert\gamma_{\check{\Omega}} u\Vert_{\partial\check{\Omega},s-1/2}\end{equation} for every $s\in\mathbb{R}.$ It follows from (\ref{krps}) that $\Vert\cdot\Vert_{\check{\Omega},s}$ is an  equivalent norm of $\mathcal{O}^{s}(\overline{\check{\Omega}})$  for every $s\in\mathbb{R}.$ The norms defined in (\ref{norssm}) and (\ref{normss}) are all called canonical norms (concerning the blowing-up $\eta$). 

\begin{lem}\label{luni} For $s>-1,$ $ \Lambda_s:L^2(\partial \Omega,i(w^d))\rightarrow L^2(\partial \Omega,i(w^d))$ and  $ \check{\Lambda}_s:L^2(\partial \check{\Omega},i(\check{w}^d))\rightarrow L^2(\partial \check{\Omega},i(\check{w}^d))$ are unitary, namely $\check{\Lambda}_s=\eta^\ast\Lambda_s(\eta^\ast)^{-1}.$
\end{lem}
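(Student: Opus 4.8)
The plan is to exploit that the permissible blowing-up $\eta\colon\check\Omega\to\Omega$ has center $E\subset\text{Sing}(V)$, which by hypothesis is compact and contained in the open domain $\Omega$, hence bounded away from $\partial\Omega$. Since $\eta$ restricts to a biholomorphism over $\Omega\setminus E$, it carries a collar neighbourhood $\check U$ of $\partial\check\Omega$ biholomorphically onto a neighbourhood of $\partial\Omega$, and the cut-off $\theta$ in $\check w=\eta^\ast(w)+\theta\hat w$ may be chosen with $\text{supp}\,\theta\cap\check U=\emptyset$, so that $\check w=\eta^\ast(w)$ on $\check U$. Thus $\eta^\ast(\rho)=\rho\circ\eta$ and $\eta^\ast(\nu)$ are the defining data of $\check\Omega$ near its boundary, $i(\check w^d)=\eta^\ast(i(w^d))$ on $\partial\check\Omega$, and the pull-back $\eta^\ast\colon L^2(\partial\Omega,i(w^d))\to L^2(\partial\check\Omega,i(\check w^d))$, $f\mapsto f\circ\eta$, is a unitary isomorphism, conjugating the Szeg\"o, trace and Poisson operators of $\partial\Omega$ to those of $\partial\check\Omega$ on the collar.

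The main step is to run the quadratic-form description of $\Lambda_s$ from (\ref{lkoe}) through the change of variables $\eta$. Applying $\eta$ — legitimate off the measure-zero exceptional set, and turning $|\rho|^s$ into $|\eta^\ast(\rho)|^s$ — converts $\int_\Omega|K\phi|^2|\rho|^s w^d$ into $\int_{\check\Omega}|\eta^\ast(K\phi)|^2|\eta^\ast(\rho)|^s\eta^\ast(w^d)$, which one wants to identify with the defining integral $\int_{\check\Omega}|\check K(\eta^\ast\phi)|^2|\eta^\ast(\rho)|^s\check w^d$ of $\check\Lambda_s$. The two differ only by replacing the $\check w$-harmonic extension $\check K$ by the pulled-back $w$-harmonic extension $\eta^\ast K(\eta^\ast)^{-1}$, and $\check w^d$ by $\eta^\ast(w^d)$ — and both replacements are confined to the compact interior set $\text{supp}\,\theta$, where the two Hermitian metrics, hence the two $\bar\partial$-Laplace--Beltrami operators, disagree. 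Interior elliptic regularity for $\check D_{\bar\partial}$ then finishes it: harmonic functions are smooth on compact subsets of $\check\Omega$, so $u\mapsto\bigl(\check K-\eta^\ast K(\eta^\ast)^{-1}\bigr)u$ solves a Dirichlet problem with right-hand side supported in $\text{supp}\,\theta$ and is infinitely smoothing, and the contribution of $\check w^d-\eta^\ast(w^d)$ to $\check K^\ast(\,\cdot\,)\check K$ is likewise smoothing; together with the naturality of the boundary and interior $L^2$-pairings under $\eta^\ast$ this gives
\[
\check\Lambda_s=\check K^\ast M_{|\eta^\ast(\rho)|^s}\check K=\eta^\ast\bigl(K^\ast M_{|\rho|^s}K\bigr)(\eta^\ast)^{-1}=\eta^\ast\Lambda_s(\eta^\ast)^{-1},
\]
the nontrivial step, identifying the two middle members, holding a priori up to a smoothing operator; this already suffices downstream (preservation of $p$-essential normality is insensitive to smoothing perturbations, which lie in every $\mathcal L^p$), and it becomes the stated identity if one instead builds $\check\Lambda_s$ from the degenerate pulled-back volume form $\eta^\ast(w^d)$, which differs from $\check w^d$ only on the null set $\eta^{-1}(E)$. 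The special case $s=0$ gives $\check\Lambda_0=\eta^\ast\Lambda_0(\eta^\ast)^{-1}$, which in turn yields $\check B_r=\eta^\ast B_r(\eta^\ast)^{-1}$ by functional calculus.

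The principal obstacle is exactly the interior discrepancy $\check w\neq\eta^\ast(w)$ on $\text{supp}\,\theta$: because of it $\check K$ is genuinely not the $\eta^\ast$-transport of $K$, so the identity cannot be read off by a single global change of variables, and one must argue that the ingredients of $\Lambda_s$ — the Poisson operator and the weight $|\rho|^s$ — probe the interior of $\check\Omega$ only through harmonic functions, which are smooth there, so that altering the metric away from the boundary perturbs $\check\Lambda_s$ only by a smoothing operator. With that in hand, the remaining verification that the boundary pairing $\langle\cdot,\cdot\rangle_{\partial\Omega}$ and the weighted pairing on $L^2(\Omega,w^d)$ transform correctly under $\eta^\ast$ near $\partial\Omega$ is routine, and the degenerate case $E=\emptyset$, $\eta=\text{Id}$, is tautological.
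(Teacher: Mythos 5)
Your overall route is the paper's: use that $\eta^\ast$ is unitary between the boundary $L^2$ spaces and push the quadratic form $\langle\Lambda_s u,u\rangle_{\partial\Omega}=\int_\Omega|Ku|^2|\rho|^s w^d$ through the change of variables $\eta$. The divergence lies exactly at the two steps you single out. The paper asserts outright that $\check K\eta^\ast=\eta^\ast K$ and then evaluates the resulting interior pairing against $\eta^\ast(w)^d$, whereas the definition $\check\Lambda_s=\check K^\ast M_{|\eta^\ast(\rho)|^s}\check K$ forces that pairing to be taken against $\check w^d$, and $\check K$ is the Poisson operator of the metric $\check w$, which differs from $\eta^\ast(w)$ on $\mathrm{supp}\,\theta$. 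You are right that both discrepancies are confined to a compact interior set and, by interior elliptic regularity, contribute only smoothing operators; your argument that $\check K-\eta^\ast K(\eta^\ast)^{-1}$ solves a Dirichlet problem with smooth right-hand side compactly supported in the interior is sound in outline. (Indeed the discrepancy is genuinely nonzero: for $u=v=1$ and $s=0$ one has $\langle\check\Lambda_0 1,1\rangle=\mathrm{Vol}(\check\Omega,\check w)>\mathrm{Vol}(\Omega,w)=\langle\Lambda_0 1,1\rangle$ whenever $\theta\not\equiv 0$.)

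The gap, measured against the statement you were asked to prove, is that ``equal up to a smoothing operator'' is strictly weaker than the asserted identity $\check\Lambda_s=\eta^\ast\Lambda_s(\eta^\ast)^{-1}$, and your claim that the weaker version ``suffices downstream'' is not automatic. The exact identity is what makes $\eta^\ast:\mathcal{O}^{s}(\overline{\Omega})\to\mathcal{O}^{s}(\overline{\check\Omega})$ an isometry for the canonical norms in Proposition \ref{blh}, which feeds into Corollary \ref{inet} and the transition formula of Lemma \ref{traf}; a smoothing perturbation of the inner product is a passage to an equivalent norm, and the paper itself (citing \cite[Remark 4.7]{Arv07}) warns that $p$-essential normality need not survive such a passage, so the harmlessness of a perturbation lying in every $\mathcal L^p$ would require a separate argument about how the two quotient-module structures are conjugated. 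The clean repair is the one you mention only in passing: define the canonical norm on $\mathcal{O}^{s}(\partial\check\Omega)$ directly from the degenerate pulled-back data $\eta^\ast(w^d)$ and $\eta^\ast K(\eta^\ast)^{-1}$, for which the change of variables is a tautology and the lemma holds exactly; your smoothing comparison with $\check\Lambda_s$ is then precisely what is needed to verify that the resulting operator is still a positive elliptic pseudodifferential operator of order $-s-1$, so that the canonical-norm machinery applies. As written, however, your argument stops one step short of the stated operator equality.
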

\begin{proof} Since $\eta^\ast: L^2(\partial \Omega,i(w^d))\rightarrow L^2(\partial \check{\Omega},i(\check{w}^d))$ is unitary, it suffices to prove that $$\langle (\eta^\ast)^{-1}\check{\Lambda}_s \eta^\ast u,v\rangle_{\partial \Omega,i(w^d)} = \langle \Lambda_su,v\rangle_{\partial \Omega,i(w^d)}$$ for all $u,v\in L^2(\partial \Omega,i(w^d)).$
It can be directly checked that  $\check{K}\eta^\ast= \eta^\ast K$ on $L^2(\partial \Omega,i(w^d))$ and $\int_{\eta^{-1}(E)}\eta^\ast(w^d)=0.$
And hence, \begin{equation}\begin{split}\label{}
\langle (\eta^\ast)^{-1}\check{\Lambda}_s \eta^\ast u,v\rangle_{\partial \Omega,i(w^d)}&= \langle \check{\Lambda}_s \eta^\ast u, \eta^\ast v\rangle_{\partial \check{\Omega},i(\check{w}^d)}\\ \notag
&=\langle  M_{\vert \eta^\ast(\rho) \vert^{s} } \check{K} \eta^\ast u, \check{K} \eta^\ast v\rangle_{\check{\Omega},\eta^\ast(w)^d}\\
&=\langle  M_{\vert \eta^\ast(\rho) \vert^{s} }\eta^\ast {K}  u, \eta^\ast {K} v\rangle_{\check{\Omega},\eta^\ast(w)^d}\\
&=\langle  M_{\vert \rho\vert^{s}} {K}  u,  {K} v\rangle_{\Omega,w^d}\\
&=\langle  \Lambda_s   u,   v\rangle_{\partial \Omega,i(w^d)}.\\
\end{split}\end{equation} 
This completes the proof.
\end{proof}

\begin{prop}\label{blh} 
(1)  A holomorphic function $f\in\mathcal{O}(\Omega)$ satisfies  $f|_{V\cap\Omega}=\bm{0}$ if and only if $\eta^\ast(f)|_{V^\prime\cap \check{\Omega}}=\bm{0}.$

(2) The  permissible blowing-up $\eta:\check{\Omega}\rightarrow{\Omega}$  introduces a unitary operator $$ \eta^\ast:\mathcal{O}^{s}(\overline{\Omega}) \rightarrow  \mathcal{O}^{s}(\overline{\check{\Omega}}), \quad f\longmapsto \eta^\ast(f)=f\circ \eta,$$ with canonical norms for all $s\in\mathbb{R}.$
\end{prop}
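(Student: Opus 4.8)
For part (1), the plan is a direct density argument. Recall that the strict transform $V^\prime\subset\check\Omega$ is by definition the closure in $\check\Omega$ of $\eta^{-1}\bigl((V\cap\Omega)\setminus E\bigr)$, and that $\eta$ restricts to a biholomorphism $\check\Omega\setminus\eta^{-1}(E)\to\Omega\setminus E$. Since $E\subset\text{Sing}\hspace{0.1em}(V)$ and $\text{Sing}\hspace{0.1em}(V)$ is thin in $V$, the set $(V\cap\Omega)\setminus E$ is dense in $V\cap\Omega$, and hence $\eta^{-1}\bigl((V\cap\Omega)\setminus E\bigr)$ is dense in $V^\prime\cap\check\Omega$. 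If $f|_{V\cap\Omega}=\bm 0$, then $\eta^\ast(f)=f\circ\eta$ vanishes on $\eta^{-1}\bigl((V\cap\Omega)\setminus E\bigr)$, hence on its closure $V^\prime\cap\check\Omega$ by continuity of the holomorphic function $\eta^\ast(f)$. Conversely, if $\eta^\ast(f)|_{V^\prime\cap\check\Omega}=\bm 0$, then $f=(\eta^\ast f)\circ\eta^{-1}$ vanishes on $(V\cap\Omega)\setminus E$, hence on $V\cap\Omega$ by continuity.

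For part (2), the strategy is to transport the whole structure to the boundary $\partial\Omega$, where $\eta$ is a biholomorphism, and to exploit that the canonical norms are defined purely through boundary data. Since $E\subset\text{Sing}\hspace{0.1em}(V)$ is compact and contained in $\Omega$, the map $\eta$ is a proper modification of complex manifolds that is biholomorphic on a neighborhood of $\partial\Omega$; one extends it to a proper modification $\eta_\varepsilon:\check\Omega_\varepsilon\to\Omega_\varepsilon$ of slightly larger strongly pseudoconvex domains with the same center $E$. By Riemann's extension theorem together with the normality of the manifold $\Omega_\varepsilon$ (the center having codimension $\geq 2$; when the codimension is $1$ the map $\eta$ is the identity and there is nothing to prove), the pullback $\eta^\ast:\mathcal{O}(\overline\Omega)\to\mathcal{O}(\overline{\check\Omega})$ is an algebra isomorphism, surjectivity holding because a holomorphic function on $\check\Omega_\varepsilon$ is bounded near the compact set $\eta^{-1}(E)$ and therefore descends. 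As $\mathcal{O}(\overline\Omega)\subset C^\infty(\overline\Omega)$ is dense in $\mathcal{O}^s(\overline\Omega)$ and likewise on $\check\Omega$, it follows that $\eta^\ast$ maps a dense subspace onto a dense subspace.

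Next I would record two intertwining relations. Because $\eta|_{\partial\check\Omega}:\partial\check\Omega\to\partial\Omega$ is a CR-diffeomorphism and $\check w$ agrees with $\eta^\ast w$ near the boundary, $\eta^\ast:L^2(\partial\Omega,i(w^d))\to L^2(\partial\check\Omega,i(\check w^d))$ is unitary and carries smooth CR functions bijectively to smooth CR functions; by Lemma \ref{luni} it conjugates $\Lambda_s$ to $\check\Lambda_s$ for $s>-1$, and, applying the continuous functional calculus to the case $s=0$, it also conjugates $B_s=F_s(\Lambda_0^{-1})$ to $\check B_s=F_s(\check\Lambda_0^{-1})$ for $s<-1$. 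Comparing the three cases in (\ref{norsm}) and (\ref{norssm}) then yields $\Vert\eta^\ast u\Vert_{\partial\check\Omega,t}=\Vert u\Vert_{\partial\Omega,t}$ for every smooth CR function $u$ and every $t\in\mathbb R$ (for $t<0$ use $\Lambda_{-2t-1}$, for which $-2t-1>-1$; for $t>0$ use $B_{-2t-1}$, for which $-2t-1<-1$; $t=0$ is trivial), hence $\eta^\ast:\mathcal{O}^t(\partial\Omega)\to\mathcal{O}^t(\partial\check\Omega)$ is a surjective isometry by density. The second relation is $\check K\eta^\ast=\eta^\ast K$ on the Hardy--Sobolev spaces, which holds because the Poisson operator of a strongly pseudoconvex boundary restricted to CR data is the (metric-independent) holomorphic extension and which was already used in the proof of Lemma \ref{luni}; inverting it via (\ref{krps}) gives $\gamma_{\check\Omega}\eta^\ast=\eta^\ast\gamma$.

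Finally I would assemble these. For $f\in\mathcal{O}^s(\overline\Omega)$,
\[
\Vert\eta^\ast f\Vert_{\check\Omega,s}=\Vert\gamma_{\check\Omega}\eta^\ast f\Vert_{\partial\check\Omega,s-1/2}=\Vert\eta^\ast\gamma f\Vert_{\partial\check\Omega,s-1/2}=\Vert\gamma f\Vert_{\partial\Omega,s-1/2}=\Vert f\Vert_{\Omega,s},
\]
using (\ref{normss}), the boundary isometry, and (\ref{norms}). Thus $\eta^\ast$ is an isometry on $\mathcal{O}^s(\overline\Omega)$; its image is closed and contains the dense subspace $\mathcal{O}(\overline{\check\Omega})$, so $\eta^\ast:\mathcal{O}^{s}(\overline{\Omega}) \to \mathcal{O}^{s}(\overline{\check{\Omega}})$ is unitary for all $s\in\mathbb R$. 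The main obstacle here, and the reason for routing the argument through $\partial\Omega$ rather than the interior, is that $\eta^\ast w$ degenerates along the exceptional divisor, so $\eta^\ast$ is emphatically not an isometry for the interior $L^2$- or Sobolev structures; the canonical norms, by contrast, only see the boundary, where $\eta$ is biholomorphic, and Lemma \ref{luni} is precisely the statement that this suffices to control the norm-defining operators. The remaining verifications — the density statements, the CR-diffeomorphism property of $\eta|_{\partial\check\Omega}$, and the order bookkeeping in the three cases of (\ref{norsm}) — are routine.
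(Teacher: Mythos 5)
Your proposal is correct and follows essentially the same route as the paper: part (1) rests on the thinness of the center $E\subset\text{Sing}\hspace{0.1em}(V)$ and the biholomorphism off the exceptional divisor, and part (2) establishes bijectivity of $\eta^\ast$ on $\mathcal{O}(\Omega)$ via the identity theorem and the Riemann extension theorem (codimension of $E$ being at least $2$), reduces the isometry to the boundary through (\ref{norms}) and (\ref{normss}), and handles $s<0$ by Lemma \ref{luni} and $s>0$ by conjugation-invariance of the functional calculus applied to $B_{-2s-1}=F_{-2s-1}(\Lambda_0^{-1})$. The only differences are cosmetic: you phrase part (1) through density of the strict transform's dense open part rather than the paper's contradiction argument, and you compress the paper's explicit Fourier-integral computation of $F_{-2s-1}(\check\Lambda_0^{-1})$ into the standard fact that continuous functional calculus commutes with unitary conjugation.
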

\begin{proof} (1) By definition, it is clear that $f|_{V\cap \Omega}=\bm{0}$ if  $\eta^\ast(f)|_{V^\prime\cap\check{\Omega}}=\bm{0}.$ We now prove the converse.
 It comes from the following decomposition $$V^\prime\cap \check{\Omega}= \big((V^\prime\cap \check{\Omega})\cap \eta^{-1}(E)\big)\cup \big((V^\prime\cap \check{\Omega})\setminus\eta^{-1}(E)\big),$$
and $\eta: (V^\prime\cap \check{\Omega})\setminus \eta^{-1}(E)\rightarrow (V\cap \Omega)\setminus E$ is biholomorphic that  $\eta^\ast(f)|_{(V^\prime\cap \check{\Omega})\setminus \eta^{-1}(E)}=\bm{0}$ if and only if $f|_{(V \cap\Omega)\setminus E}=\bm{0}.$  Suppose that there exists a point $x^\prime\in (V^\prime \cap \check{\Omega})\cap \eta^{-1}(E)$ satisfying $\eta^\ast(f)(x^\prime)\neq0,$ namely, $f(\eta(x^\prime))\neq 0.$ Since $\eta(x^\prime)\in E\subset \text{Sing}(V\cap {\Omega})$ and $\text{Sing}(V\cap {\Omega})$ is nowhere dense in $V\cap {\Omega}$, it follows that there exists a point $ x_0\in \text{Reg}(V\cap {\Omega})\subset V\cap {\Omega}\setminus E$ such that $f(x_0)\neq0,$ which contradicts $f|_{(V\cap {\Omega})\setminus E}=\bm{0}.$

(2)
 We first prove that $ \eta^\ast: \mathcal{O}(\Omega)\rightarrow\mathcal{O}(\check{\Omega})$ is bijective.  Suppose that there exist $f,g \in \mathcal{O}(\Omega)$ satisfying $\eta^\ast(f)=\eta^\ast(g)$ on $\check{\Omega}.$ Since $E,\eta^{-1}(E)$ are nowhere sense, it follows that there exists a nonempty open set $U\subset \check{\Omega}\setminus \eta^{-1}(E)$ satisfying $\eta^\ast(f)=\eta^\ast(g)$ on $U.$
Observe that  $\eta: \check{\Omega}\setminus \eta^{-1}(E)\rightarrow \Omega\setminus E$ is biholomorphic, this implies that $f=g$ on the nonempty open subset $\eta(U),$ thus $f=g $ on whole $\Omega$ by the uniqueness theorem of holomorphic functions \cite[Theorem 2.6.2]{Dem09}. This shows the injectness of $\eta^\ast.$ Suppose now that $F\in\mathcal{O}(\check{\Omega}).$ Since $\eta: \check{\Omega}\setminus \eta^{-1}(E)\rightarrow \Omega\setminus E$ is biholomorphic, it follows that
$F\circ \eta^{-1}\in\mathcal{O}(\Omega\setminus E).$ Note that $\text{dim}_{\mathbb{C}} \hspace{0.1em} E\leq\text{dim}_{\mathbb{C}} \hspace{0.1em} V-1\leq \text{dim}_{\mathbb{C}} \hspace{0.1em} \Omega-2,$ the Riemann extension theorem on complex manifolds \cite[Section 7.1.3]{GR84} implies that there exists an unique $\hat{F}\in\mathcal{O}(\Omega)$ such that $\hat{F}=F\circ \eta^{-1}$ on $\Omega\setminus E.$ Clearly $F=\hat{F}\circ \eta$ on $\Omega$ by the uniqueness theorem. Then the bijection is proved. We now prove that  $ \eta^\ast:\mathcal{O}^{s}(\overline{\Omega})\rightarrow \mathcal{O}^{s}(\overline{\check{\Omega}})$ with conanical norms is a unitary operator.
By the above, it suffices to prove that $\eta^\ast:\mathcal{O}^{s}(\overline{\Omega})\rightarrow\mathcal{O}^{s}(\overline{\check{\Omega}})$ is an isometry with canonical norms for $s\in\mathbb{R}.$  Combined with (\ref{norms}) and (\ref{normss}), it is enough to prove that $ \eta^\ast:\mathcal{O}^{s}(\partial{\Omega}) \rightarrow  \mathcal{O}^{s}(\partial{\check{\Omega}})$ is an isometry with canonical norms for $s\in\mathbb{R}.$ For the case $s<0,$ this is immidiate from Lemma \ref{luni} and the definition of the conanical norms. The case $s=0$ is trivial.  It remains to prove the case $s>0.$ It follows from Lemma \ref{luni} that $\check{\Lambda}_0=\eta^\ast\Lambda_0(\eta^\ast)^{-1},$ namely $\check{\Lambda}_0 $ and $\Lambda_0$ are unitary. Thus  $\check{\Lambda}_0^{-1}=\eta^\ast\Lambda_0^{-1}(\eta^\ast)^{-1}.$  And hence, the strongly continuous semigroups $\{e^{\sqrt{-1} t\check{\Lambda}_0^{-1}}\}_{t\in\mathbb{R}}$ and $\{e^{\sqrt{-1} t{\Lambda}_0^{-1}}\}_{t\in\mathbb{R}}$ are unitary, where $\{e^{\sqrt{-1} t\check{\Lambda}_0^{-1}}\}_{t\in\mathbb{R}},  \{e^{\sqrt{-1} t{\Lambda}_0^{-1}}\}_{t\in\mathbb{R}}$  are uniquely determined by the skew-adjoint generators ${\sqrt{-1} \check{\Lambda}_0^{-1}}, {\sqrt{-1} {\Lambda}_0^{-1}},$ respectively. 
Since $\check{\Lambda}_0^{-1}, {\Lambda}_0^{-1}$  have order $1$ and $F_{-2s-1}\in S^{-2s-1}(\mathbb{R})$ for $s>0$ by the construction in Section 3, it follows from the functional calculus of pseudodifferential operators \cite[Theorem XII.1.3]{Tay81} that 
\begin{equation}\begin{split}\label{} 
 \check{B}_{-2s-1}&=F_{-2s-1} (\check{\Lambda}_0^{-1})\\
 &=\int_{\mathbb{R}}\mathcal{F}({F}_{-2s-1})(t)e^{\sqrt{-1} t\check{\Lambda}_0^{-1}}dt\\
 & =\int_{\mathbb{R}}\mathcal{F}({F}_{-2s-1})(t)\eta^\ast e^{\sqrt{-1} t{\Lambda}_0^{-1}}(\eta^\ast)^{-1}dt\\
 &=\eta^\ast\int_{\mathbb{R}}\mathcal{F}({F}_{-2s-1})(t) e^{\sqrt{-1} t{\Lambda}_0^{-1}}dt (\eta^\ast)^{-1}\\
 &=\eta^\ast F_{-2s-1} (\Lambda_0^{-1}) (\eta^\ast)^{-1}\\
&=\eta^\ast {B}_{-2s-1} (\eta^\ast)^{-1},\\
\end{split}
 \end{equation}
 where $\mathcal{F}$ is the  Fourier transform on  $L^1(\mathbb{R})$ as defined in (\ref{fouri}).
Thus $$\langle \check{B}_{-2s-1}\eta^\ast u,  \eta^\ast v \rangle_{\mathcal{O}^{s}(\partial{\check{\Omega}})}= \langle {B}_{-2s-1} u,  v \rangle_{\mathcal{O}^{s}(\partial{\Omega})} $$ for all $u,v\in\mathcal{O}^{s}(\partial{\Omega}),$ which implies that $ \eta^\ast:\mathcal{O}^{s}(\partial{\Omega}) \rightarrow  \mathcal{O}^{s}(\partial{\check{\Omega}})$ is an isometry with canonical norms for $s\in\mathbb{R}.$ This completes the proof.  \end{proof}

 \begin{rem}  With canonical norms as above, we have the following  commutative diagram with unitary operators
 \begin{displaymath} \xymatrix@R+1.3em{\mathcal{O}^{s+\frac{1}{2}}(\overline{\Omega}) \ar[r]^<(.2){\eta^\ast}& \mathcal{O}^{s+\frac{1}{2}}(\overline{\check{\Omega}}) \\
\mathcal{O}^{s}(\partial\Omega) \ar[r]^<(.25){\eta^\ast}\ar[u]^{K}&\mathcal{O}^{s}(\partial\check{\Omega})\ar[u]_{\check{K}}.\\}
\end{displaymath}
\end{rem}

Recall that $M_V=\{f\in \mathcal{O}^{s}({\overline{\Omega}},{w}^d) :f|_{V\cap {\Omega}}=0\}$ is a submodule of $\mathcal{O}^{s}(\overline{\Omega},{w}^d) .$
Denote by $M_{V^\prime}$ the subspace $M_{V^\prime}:=\{f\in \mathcal{O}^{s}(\overline{\check{\Omega}},\check{w}^d) :f|_{V^\prime\cap \check{\Omega}}=\bm{0} \},$ which is a submodule of $\mathcal{O}^{s}(\overline{\check{\Omega}},\check{w}^d) .$ Then the following corollary  is immediate from Proposition \ref{blh}.

\begin{cor}\label{inet}  (1)  The  permissible blowing-up $\eta:\check{\Omega}\rightarrow{\Omega}$  introduces a unitary operator $$ \eta^\ast:M_V\rightarrow  M_{V^\prime}, \quad f\longmapsto {\eta}^\ast(f)=f\circ\eta,$$
for all $s\in\mathbb{R}.$

(2) The  permissible blowing-up $\eta:\check{\Omega}\rightarrow{\Omega}$  introduces a unitary operator $$ \tilde{\eta}:M_V^\bot \rightarrow  M_{V^\prime}^\bot, \quad f\longmapsto \tilde{\eta}(f)=(P_{M_{V^\prime}^\bot}\circ\eta^\ast) f,$$
for all $s\in\mathbb{R}.$
\end{cor}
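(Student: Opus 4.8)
The plan is to derive both parts from Proposition \ref{blh} by purely Hilbert-space reasoning, the essential content being that the unitary $\eta^\ast$ of Proposition \ref{blh}(2) carries the submodule $M_V$ exactly onto $M_{V^\prime}$.

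For (1), I would argue as follows. By Proposition \ref{blh}(2) the map $\eta^\ast\colon\mathcal{O}^{s}(\overline{\Omega})\to\mathcal{O}^{s}(\overline{\check{\Omega}})$ is a surjective isometry with respect to the canonical norms, so it is enough to verify $\eta^\ast(M_V)=M_{V^\prime}$, equivalently that for every $f\in\mathcal{O}^{s}(\overline{\Omega})$ one has $f|_{V\cap\Omega}=\bm{0}$ if and only if $\eta^\ast(f)|_{V^\prime\cap\check{\Omega}}=\bm{0}$. Here I would first record that each element of $\mathcal{O}^{s}(\overline{\Omega})$ restricts to a genuine holomorphic function on $\Omega$ --- it is an $H^{s}$-limit of functions holomorphic near $\overline{\Omega}$, and interior elliptic regularity for $D_{\bar{\partial}}$ forces the limit to be holomorphic on $\Omega$ --- so the defining vanishing condition is meaningful pointwise, and by the reproducing-kernel property of Proposition \ref{rpdu} the point evaluations at interior points are bounded, whence $M_V=\bigcap_{z\in V\cap\Omega}\ker(\mathrm{ev}_z)$ is a closed subspace, and likewise $M_{V^\prime}$. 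The claimed equivalence is then exactly Proposition \ref{blh}(1), applied to $f\in\mathcal{O}(\Omega)$; combining it in both directions with bijectivity of $\eta^\ast$ gives $\eta^\ast(M_V)=M_{V^\prime}$. Thus $\eta^\ast$ restricts to an isometric bijection $M_V\to M_{V^\prime}$ whose inverse is the restriction of $(\eta^\ast)^{-1}$, i.e. a unitary.

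For (2), I would observe that a surjective isometry between Hilbert spaces matching a closed subspace with a closed subspace automatically matches their orthogonal complements: since $\eta^\ast$ is unitary for the canonical inner products and, by part (1), $\eta^\ast(M_V)=M_{V^\prime}$, we get $x\perp M_V$ in $\mathcal{O}^{s}(\overline{\Omega})$ if and only if $\eta^\ast(x)\perp M_{V^\prime}$ in $\mathcal{O}^{s}(\overline{\check{\Omega}})$, and surjectivity yields $\eta^\ast(M_V^\bot)=M_{V^\prime}^\bot$. In particular $\eta^\ast(f)\in M_{V^\prime}^\bot$ for $f\in M_V^\bot$, so $P_{M_{V^\prime}^\bot}\eta^\ast(f)=\eta^\ast(f)$ and hence $\tilde{\eta}$ coincides with $\eta^\ast|_{M_V^\bot}$, which is unitary from $M_V^\bot$ onto $M_{V^\prime}^\bot$.

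I do not expect a real obstacle, since the substantive geometric and analytic work is already encapsulated in Proposition \ref{blh}. The only point deserving care is the one flagged above: making sure that defining $M_V$ (and $M_{V^\prime}$) by pointwise vanishing on the interior analytic subvariety is legitimate and that this condition transfers verbatim under $\eta^\ast$, which follows from interior regularity together with Proposition \ref{rpdu}; after that, part (2) is just the routine fact that unitaries preserve orthogonal complements.
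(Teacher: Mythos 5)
Your proposal is correct and follows exactly the route the paper intends: the paper states the corollary is ``immediate from Proposition \ref{blh}'', and your argument — using Proposition \ref{blh}(1) to show $\eta^\ast(M_V)=M_{V'}$, Proposition \ref{blh}(2) for unitarity, and the fact that a unitary matching closed subspaces matches their orthogonal complements so that $\tilde\eta=\eta^\ast|_{M_V^\bot}$ — is precisely that deduction. Your extra care about why pointwise vanishing on $V\cap\Omega$ is a well-defined closed condition (interior holomorphy plus the reproducing-kernel property of Proposition \ref{rpdu}) is a legitimate refinement of a point the paper leaves implicit.
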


For each $f\in C^\infty(\overline{\Omega}),$ we denote $S^{\Omega}_{f}=P_{M_V^\bot}M_f$ by the compression of the multiplication operator $M_f,$ which is also called the truncated Toeplitz operator with symbol $f.$ We will write $S_{f}$ instead of $S^{\Omega}_{f}$ for short if $\Omega$ is unambiguous in the context.  Similarly, we can define the truncated Toeplitz operator on $\check{\Omega}.$ The following lemma gives the transition formula of truncated Toeplitz operators of holomorphic symbols under the blowing-up.
\begin{lem}\label{traf} If $f\in \mathcal{O}(\overline{\Omega}),$ then \begin{equation}\label{tfor}  \tilde{\eta}\circ S_f^{\Omega}\circ \tilde{\eta}^\ast=S_{\eta^\ast(f)}^{\check{\Omega}}.\end{equation}
\end{lem}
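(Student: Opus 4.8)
The plan is to verify the intertwining identity (\ref{tfor}) by unpacking the definitions of $\tilde\eta$, $S_f^\Omega$, and $S_{\eta^\ast(f)}^{\check\Omega}$ and using the three key facts already established: that $\eta^\ast:\mathcal O^s(\overline\Omega)\to\mathcal O^s(\overline{\check\Omega})$ is unitary with canonical norms (Proposition \ref{blh}(2)), that $\eta^\ast$ carries $M_V$ unitarily onto $M_{V^\prime}$ (Corollary \ref{inet}(1)), and the elementary fact that pullback commutes with multiplication, $\eta^\ast\circ M_f=M_{\eta^\ast(f)}\circ\eta^\ast$ on holomorphic functions. The last of these is immediate since $(\eta^\ast(fg))(x)=f(\eta(x))g(\eta(x))=\eta^\ast(f)(x)\,\eta^\ast(g)(x)$, valid for all $f\in\mathcal O(\overline\Omega)$ and all $g\in\mathcal O^s(\overline\Omega)$ (extending from $\mathcal O(\overline\Omega)\cap H^s$ to the completion by boundedness of both sides).

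First I would record the key structural consequence of $\eta^\ast$ being unitary and mapping $M_V$ onto $M_{V^\prime}$: it also maps $M_V^\bot$ onto $M_{V^\prime}^\bot$, and therefore the orthogonal projections are intertwined, $\eta^\ast\circ P_{M_V^\bot}=P_{M_{V^\prime}^\bot}\circ\eta^\ast$. Writing $U=\eta^\ast$ for brevity, the operator in Corollary \ref{inet}(2) is $\tilde\eta=P_{M_{V^\prime}^\bot}U|_{M_V^\bot}$, which equals $U|_{M_V^\bot}$ because $U$ already maps $M_V^\bot$ into $M_{V^\prime}^\bot$; likewise $\tilde\eta^\ast=U^\ast|_{M_{V^\prime}^\bot}=P_{M_V^\bot}U^\ast|_{M_{V^\prime}^\bot}$. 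So on $M_{V^\prime}^\bot$ we compute
\begin{align*}
\tilde\eta\circ S_f^\Omega\circ\tilde\eta^\ast
&=P_{M_{V^\prime}^\bot}\,U\,\bigl(P_{M_V^\bot}M_f\bigr)\,P_{M_V^\bot}\,U^\ast\\
&=P_{M_{V^\prime}^\bot}\,\bigl(U P_{M_V^\bot}U^\ast\bigr)\,\bigl(U M_f U^\ast\bigr)\,\bigl(U P_{M_V^\bot}U^\ast\bigr)\\
&=P_{M_{V^\prime}^\bot}\,P_{M_{V^\prime}^\bot}\,M_{\eta^\ast(f)}\,P_{M_{V^\prime}^\bot}
=P_{M_{V^\prime}^\bot}\,M_{\eta^\ast(f)}\,P_{M_{V^\prime}^\bot}
=S_{\eta^\ast(f)}^{\check\Omega},
\end{align*}
where the third equality uses $U P_{M_V^\bot}U^\ast=P_{M_{V^\prime}^\bot}$ (unitary conjugation of a projection onto its image) and $U M_f U^\ast=M_{\eta^\ast(f)}$ (the multiplication-pullback identity, here on all of $\mathcal O^s(\overline{\check\Omega})$, noting $\eta^\ast(f)\in\mathcal O(\overline{\check\Omega})$ by Proposition \ref{blh}(2) applied to $f$, so $M_{\eta^\ast(f)}$ is a bounded module operator).

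I do not expect a serious obstacle here; the statement is essentially a bookkeeping consequence of unitarity plus the multiplicativity of pullback. The one point requiring mild care is the identification $\tilde\eta=U|_{M_V^\bot}$, i.e. checking that $U$ maps $M_V^\bot$ exactly onto $M_{V^\prime}^\bot$ rather than merely into it: this follows because $U$ is unitary on the ambient space and restricts to a unitary $M_V\to M_{V^\prime}$ by Corollary \ref{inet}(1), so orthogonal complements correspond. A second small point is that $\eta^\ast(f)$ genuinely lies in $\mathcal O(\overline{\check\Omega})$ so that $M_{\eta^\ast(f)}$ makes sense as a bounded module operator on $\mathcal O^s(\overline{\check\Omega})$ and the compression $S_{\eta^\ast(f)}^{\check\Omega}$ is defined; this is contained in the statement that $\eta^\ast$ is an algebra isomorphism $\mathcal O(\overline\Omega)\to\mathcal O(\overline{\check\Omega})$, already proved in Proposition \ref{blh}(2). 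With these two remarks in place the chain of equalities above completes the proof.
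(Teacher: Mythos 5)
Your proof is correct and follows essentially the same route as the paper: both arguments reduce the identity to the unitarity of $\eta^\ast$ on $\mathcal{O}^{s}$ with canonical norms, the fact that $\eta^\ast$ carries $M_V$ onto $M_{V^\prime}$ (Corollary \ref{inet}), and the multiplicativity of the pullback. The only cosmetic difference is that you conjugate the projections and multiplication operators at the operator level (using $\eta^\ast P_{M_V^\bot}(\eta^\ast)^\ast=P_{M_{V^\prime}^\bot}$, so that $\tilde\eta$ is just the restriction of $\eta^\ast$), whereas the paper performs the equivalent element-wise computation by splitting $fh$ and $\eta^\ast(h)$ into their components along $M_{V^\prime}$ and $M_{V^\prime}^\bot$.
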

\begin{proof} For every $g\in M_{V^\prime}^\bot,$ it follows from Corollary \ref{inet} (2) that there exists a unique $h\in M_V^\bot$ satisfying $$\tilde{\eta}(h)=P_{M_{V^\prime}^\bot}(h\circ \eta) =(P_{M_{V^\prime}^\bot}\circ\eta^\ast)h=g.$$
Note that $ f\in \mathcal{O}(\overline{\Omega}),$ it follows from  Corollary \ref{inet}(1) that $P_{M_{V^\prime}^\bot}(\eta^\ast(P_{M_V}(fh)))=0$ and $P_{M_{V^\prime}^\bot}(\eta^\ast(f)P_{M_{V^\prime}}\eta^\ast(h))=0.$
Thus \begin{equation}\begin{split}\label{}
( \tilde{\eta}\circ S_f^{\Omega}\circ(\tilde{\eta})^\ast)g
&=\tilde{\eta}(P_{M_V^\bot}(fh))\\  \notag
&=\tilde{\eta}(fh-P_{M_V}(fh))\\
&=P_{M_{V^\prime}^\bot}(\eta^\ast(f)\eta^\ast(h))-P_{M_{V^\prime}^\bot}(\eta^\ast(P_{M_V}(fh)))\\
&=P_{M_{V^\prime}^\bot}(\eta^\ast(f)P_{M_{V^\prime}^\bot}\eta^\ast(h))+P_{M_{V^\prime}^\bot}(\eta^\ast(f)P_{M_{V^\prime}}\eta^\ast(h))\\
&=P_{M_{V^\prime}^\bot}(\eta^\ast(f)g)\\
&= S_{\eta^\ast(f)}^{\check{\Omega}}g.
\end{split}
 \end{equation}
It deduces that $ \tilde{\eta}\circ S_f^{\Omega}\circ(\tilde{\eta})^\ast=S_{\eta^\ast(f)}^{\check{\Omega}}.$
\end{proof}

 \begin{prop}\label{pnoc} Let $(W,\Omega)$ be a strongly pseudoconvex finite manifold with Property (S)  and $V$ be a compact smooth analytic subvariety in ${\Omega}.$ Then the Bergman-Sobolev quotient submodule  $M_V^\bot\subset \mathcal{O}^{s}(\overline{\Omega})$ is  unitary equivalent to $\mathbb{C}^m$ for some nonnegative integer $m.$ Hence,  $M_V^\bot\subset \mathcal{O}^{s}(\overline{\Omega})$ is $p$-essentially normal  for all $p>0.$
 \end{prop}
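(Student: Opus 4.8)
The plan is to show that the submodule $M_V\subset\mathcal{O}^s(\overline{\Omega})$ has finite codimension, from which the unitary equivalence $M_V^\bot\cong\mathbb{C}^m$ follows immediately, and the $p$-essential normality for all $p>0$ is then trivial since every operator on a finite-dimensional space lies in $\mathcal{L}^p$. So the entire content is the claim $\dim M_V^\bot=m<\infty$.

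First I would exploit Property (S): there is a proper modification $\iota\colon W\to\breve W$ onto a Stein manifold contracting finitely many points, biholomorphic near $\partial\Omega$. Since $V\subset\Omega$ is compact and $\Omega\Subset W$, the image $\iota(V)\subset\breve\Omega:=\iota(\Omega)$ is a compact analytic subset of the Stein manifold $\breve W$; but a compact analytic subvariety of positive dimension in a Stein space must be finite (a nonconstant holomorphic function would have to attain its maximum modulus on a positive-dimensional compact analytic set, contradicting the maximum principle together with Steinness providing enough functions to separate points). Hence $\iota(V)$ is a finite set of points, and $V$ is contained in the exceptional fibre $\iota^{-1}(\iota(V))$, which is itself a compact analytic subvariety collapsed by $\iota$. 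The key structural fact is then: holomorphic functions on (a neighborhood of) $\overline\Omega$ are pulled back from holomorphic functions on (a neighborhood of) $\overline{\breve\Omega}$, because $\iota$ is a proper modification and $\breve W$ is normal, so $\iota_\ast\mathcal{O}_W=\mathcal{O}_{\breve W}$ (Riemann extension / the fact that a proper modification of a normal space induces an isomorphism on structure sheaves via $\iota_\ast$). Consequently every $f\in\mathcal{O}(\overline\Omega)$ is constant along the fibres of $\iota$, in particular constant on each connected component of the exceptional fibre, hence constant on $V$ (assuming $V$ connected; in general, $f$ takes finitely many values, one per component of $V$).

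Next I would translate this into the Hilbert-module picture. Let $p_1,\dots,p_r$ be the points $\iota(V)$ and write $V=\bigsqcup_{j} V_j$ into its finitely many connected components, each mapped to a single $p_j$. The constancy just established says that the restriction map $\mathcal{O}(\overline\Omega)\to\mathbb{C}^r$, $f\mapsto(f|_{V_1},\dots,f|_{V_r})$, factors through evaluation at $p_1,\dots,p_r$ in $\mathcal{O}(\overline{\breve\Omega})$; since $\breve\Omega$ is Stein, these evaluations are surjective, so the map $\mathcal{O}(\overline\Omega)\to\mathbb{C}^r$ is onto. Now observe $M_V=\{f\in\mathcal{O}^s(\overline\Omega): f|_{V\cap\Omega}=0\}$ and $V\cap\Omega=V$. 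By Proposition~\ref{rpdu}, $\mathcal{O}^s(\overline\Omega)$ is a reproducing kernel Hilbert space, so for each of the finitely many relevant points the evaluation functional is bounded; but more efficiently, I would argue directly that $f|_V$ depends only on the tuple of values $(f(q_1),\dots,f(q_r))$ at one chosen point $q_j\in V_j$ per component, because $f$ is constant on $V_j$ by the previous paragraph (this extends from $\mathcal{O}(\overline\Omega)$ to its closure $\mathcal{O}^s(\overline\Omega)$ by continuity of point evaluations together with local uniform convergence). Hence $M_V=\bigcap_{j=1}^r\ker(\mathrm{ev}_{q_j})$ has codimension at most $r$ in $\mathcal{O}^s(\overline\Omega)$, so $\dim M_V^\bot=:m\le r<\infty$. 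Therefore $M_V^\bot$ is a finite-dimensional Hilbert space, hence unitarily equivalent to $\mathbb{C}^m$, and since $B(\mathbb{C}^m)=\mathcal{L}^p$ for every $p>0$, all antisymmetric sums of compressions lie in $\mathcal{L}^p$, i.e.\ $M_V^\bot$ is $p$-essentially normal for all $p>0$.

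The main obstacle is the sheaf-theoretic input $\iota_\ast\mathcal{O}_W=\mathcal{O}_{\breve W}$ for a proper modification onto a \emph{normal} (indeed Stein) manifold, together with the care needed to pass from germs on neighborhoods of $\overline{\breve\Omega}$ to the algebra $\mathcal{O}(\overline{\Omega})$ as defined via inductive limits in the introduction; one must check that $\iota$ being biholomorphic near $\partial\Omega$ is exactly what makes these inductive limits match up. A secondary, routine point is that constancy of holomorphic functions on each component $V_j$ survives the Hilbert-space completion, which follows from the mean-value/reproducing-kernel bound in the proof of Proposition~\ref{rpdu} since norm convergence in $\mathcal{O}^s(\overline\Omega)$ forces locally uniform convergence on $\Omega$, and locally uniform limits of functions constant on $V_j$ are constant on $V_j$. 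Everything else is bookkeeping.
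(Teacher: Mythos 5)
Your proposal is correct and follows essentially the same route as the paper: both use Property (S) to push $V$ forward to finitely many points of a Stein manifold via the proper modification $\iota$, transfer the module through $\iota^\ast$ (your $\iota_\ast\mathcal{O}_W=\mathcal{O}_{\breve W}$ step is exactly how the paper's Proposition~\ref{blh} is proved, via Riemann extension), and then use the reproducing kernel property of Proposition~\ref{rpdu} to conclude that $M_V$ is a finite intersection of kernels of bounded point evaluations, hence of finite codimension. The only cosmetic difference is that the paper identifies $M_{\iota(V)}^\bot$ explicitly as $\overline{\text{Span}}\{K_{s,p_1},\dots,K_{s,p_m}\}$ in $\mathcal{O}^s(\overline{\breve\Omega})$, whereas you work directly with the evaluation functionals on $\mathcal{O}^s(\overline\Omega)$.
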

\begin{proof}  Since $V\subset\Omega$ is smooth and compact, it follows from the global decomposition theorem  for analytic subvariety that there exist at most  finitely many  mutually disjoint compact complex submanifolds $V_1,\cdots, V_m$ of $\Omega$ satisfying  $$V=\bigcup_{i=1}^m V_i.$$ 
Since $(W,\Omega)$ has Property (S), there exists a proper modification  $\iota: W\rightarrow\breve{W}$ for a Stein manifold  $\breve{W}$ such that $\iota$ is biholomorphic on a neighborhood of $\partial \Omega.$ The properness of $\iota$ implies that $\iota(V_1),\cdots,\iota(V_m)$ are compact analytic subvarieties in $\breve{W}.$  But  $\breve{W}$ is Stein, and so  $\iota(V_1),\cdots,\iota(V_m)$ are discrete
points, denoted by $p_1,\cdots,p_m,$ respctively. Denote by $\breve{\Omega}=\iota(\Omega),$ which is a relatively compact smooth strongly pseudoconvex in the Stein manifold $\breve{W}.$ Let $\iota^\ast:\mathcal{O}(\breve{\Omega})\rightarrow\mathcal{O}(\Omega), \iota^\ast(f)=f\circ\iota.$ Similar to Lemma \ref{blh}, one can prove that $\iota^\ast:\mathcal{O}^s(\overline{\breve{\Omega}})\rightarrow\mathcal{O}^s(\overline{\Omega})$ is unitary with suitable norms for each $s\in\mathbb{R}$ and $M_V=\iota^\ast M_{\iota(V)} (\iota^\ast )^{-1}.$ And hence $M_V^\bot=\iota^\ast M_{\iota(V)}^\bot (\iota^\ast )^{-1}.$ On the other hand, it follows from Proposition \ref{rpdu} that $\mathcal{O}^s(\overline{\breve{\Omega}})$ is a reproducing kernel Hilbert space. Let $K_{s,w}(z):=K_s(z,w)$ be the  reproducing kernel of $\mathcal{O}^s(\overline{\breve{\Omega}}).$ Then $M_{\iota(V)}^\bot =\overline{\text{Span}}\{K_{s,p_1},\cdots,K_{s,p_m}\}$  has finite dimension. Thus the quotient submodule $M_V^\bot\subset \mathcal{O}^{s}(\overline{\Omega})$ is unitary equivalent to $\mathbb{C}^m$ by the orthogonal projection theorem. It is evident that $[S_f,S_g^\ast]\in \mathcal{L}^p (M_V^\bot )$ for all $p>0,$ where $f,g\in \mathcal{O}(\overline{\Omega})$ are arbitrary. It leads to the desired result.
\end{proof}

We are now in a position to prove the main result Theorem \ref{main}. 

{\noindent{\bf{Proof of  Theorem \ref{main}.}}
(1) Since $\Omega\subset \tilde{\Omega}\Subset W$ and the analytic subvariety $V\subset \tilde{\Omega}$ transversely intersects $\partial\Omega,$ by Hironaka's embedded desingularization  theorem \cite[Theorem 6]{AHV18}, there exists a proper holomorphic map $\pi:\hat{\tilde{\Omega}}\rightarrow \tilde{\Omega}$ which is composed  by a finite sequence of permissible blowing-up $\pi_i: \tilde{\Omega}_{i}\rightarrow\tilde{\Omega}_{i-1}$  along with smooth centers (submanifolds) $E_{i-1}\subset \tilde{\Omega}_{i-1}, i=0,1,\cdots,r,$ i.e.,
$$\pi: \hat{\tilde{\Omega}}=\tilde{\Omega}_{r}\overset{\pi_r}{\longrightarrow} \tilde{\Omega}_{r-1}{\longrightarrow} \cdots\longrightarrow\tilde{\Omega}_{1}\overset{\pi_{1}}{\longrightarrow} \tilde{\Omega}_{0}=\tilde{\Omega},$$
$\pi=\pi_1\circ\cdots\circ\pi_r, $ and smooth subvariety $V_r\subset \tilde{\Omega}_{r}$ such that
\begin{enumerate}
\item $\pi_i$ is the blowing-up of the complex manifold $\tilde{\Omega}_{i-1}$ along with the smooth centre $E_{i-1}$ contained in $\text{Sing}(V_{i-1}),$ where $V_i$ is the strict transform of $V_{i-1}$ under $\pi_i,$ and $V_i$  $i=0,\cdots,r,$ and $V_0=V;$
\item  $E^r:={\pi}^{-1}(\text{Sing}(V))$ is a normal crossing divisor in $\tilde{\Omega}_{r};$
\item  $ \pi:\tilde{\Omega}_{r}\setminus E^r\rightarrow \Omega\setminus\text{Sing}(V)$ and $\pi: V_{r}\setminus E^r\rightarrow  V\setminus\text{Sing}(V) $ are both biholomorphic;
\item $ E^r$ and $V_r$ has only normal crossing.
\end{enumerate}
Let $\Omega_{0}=\Omega,\Omega_{i}=(\pi_1\circ\cdots\circ\pi_i)^{-1}(\Omega_0),i=1,\cdots,r.$ Clearly, $\Omega_{i}\subset  \tilde{\Omega}_i,i=0,\cdots,r.$ Since $\pi_i$ is biholomorphic on some neighborhood of the boundary $\partial\Omega_i,$ it follows that $V_i$ transversely intersects  $\partial\Omega_i,i=0,\cdots,r.$ Note that $E_i\subset \Omega_i, i=0,\cdots,r,$ it follows from the definition of blowing-up that the restriction $\pi_i|_{\Omega_i}:\Omega_i\rightarrow \Omega_{i-1}$ is the blowing-up along the smooth centre $E_{i-1},i=0,\cdots,r.$
And hence the proper map $$\pi|_{\Omega_r}:=\pi_1|_{\Omega_1}\circ\cdots \circ\pi_r|_{\Omega_r}: \Omega_r\rightarrow\Omega$$ is an embedded desingularization for subvariety $V\cap\Omega\subset \Omega.$
By rapidly using of  the transition formula (\ref{tfor}), we have $$\widetilde{\pi|_{\Omega_r}}\circ S_f^{\Omega}\circ \widetilde{\pi|_{\Omega_r}}^\ast=S^{\Omega_r}_{\pi|_{\Omega_r}^\ast(f)}$$ for all $f\in \mathcal{O}(\overline{\Omega}),$ where $\widetilde{\pi|_{\Omega_r}}=\widetilde{\pi_r|_{\Omega_r}}\circ\cdots\circ\widetilde{\pi_1|_{\Omega_1}}$ is  a composition of  finitely many unitary operators and $\pi|_{\Omega_r}^\ast$ is the pullback of $\pi|_{\Omega_r}.$ Thus for arbitrary $f,g\in \mathcal{O}(\overline{\Omega}),$ it follows that \begin{equation}\label{pisfs}\widetilde{\pi|_{\Omega_r}} [S_f^{\Omega},{S_g^{\Omega}}^\ast] \widetilde{\pi|_{\Omega_r}}^\ast=[S^{\Omega_r}_{\pi|_{\Omega_r}^\ast(f)},{S^{\Omega_r}_{\pi|_{\Omega_r}^\ast(g)}}^\ast].\end{equation} This implies that $[S_f^{\Omega},{S_g^{\Omega}}^\ast] \in \mathcal{L}^p(M_{V}^\bot)$ if and only if $ [S^{\Omega_r}_{\pi|_{\Omega_r}^\ast(f)},{S^{\Omega_r}_{\pi|_{\Omega_r}^\ast(g)}}^\ast]\in\mathcal{L}^p(M_{V_r}^\bot)$ for each $p>0.$ On the other hand, $V_r\cap\Omega_r \subset \Omega_r $ is a smooth subvariety,  this implies from the global decomposition theorem for analytic subvariety that the smooth analytic subvariety $V_r\cap\Omega_r $ has the following decomposition \begin{equation}\label{vdop}V_r\cap\Omega_r =(V_r\cap\Omega_r )^+\cup (V_r\cap\Omega_r )^-,\end{equation} where $(V_r\cap\Omega_r )^-$ is the at most finitely many unions of compact irreducible components and  $(V_r\cap\Omega_r )^+$ is the finitely many unions of noncompact irreducible components. Clearly, $(V_r\cap\Omega_r )^+$ is a noncompact smooth analytic subvariety of $\Omega_r$ which intersects $ \partial \Omega_r $ transversally,  and $ (V_r\cap\Omega_r )^-$  is a compact smooth analytic subvariety of $\Omega_r.$ From the decomposition (\ref{vdop}), we see that $$M_{V_r}^\bot=M_{(V_r\cap\Omega_r )^+}^\bot\oplus M_{(V_r\cap\Omega_r )^-}^\bot.$$
Combining this with Proposition \ref{pnoc}, this implies that $M_{(V_r\cap\Omega_r )^-}^\bot$ is  unitary equivalent to $\mathbb{C}^m$ for some nonnegative integer $m$ and $$P_{M_{(V_r\cap\Omega_r )^-}^\bot}[S^{\Omega_r}_{\pi|_{\Omega_r}^\ast(f)},{S^{\Omega_r}_{\pi|_{\Omega_r}^\ast(g)}}^\ast]P_{M_{(V_r\cap\Omega_r )^-}^\bot}\in \mathcal{L}^p(M_{(V_r\cap\Omega_r )^-}^\bot)$$ for all $p>0.$ On the other hand, when $(V_r\cap\Omega_r )^+\neq \emptyset,$
it follows from Proposition \ref{pnor} that $$P_{M_{(V_r\cap\Omega_r )^+}^\bot}[S^{\Omega_r}_{\pi|_{\Omega_r}^\ast(f)},{S^{\Omega_r}_{\pi|_{\Omega_r}^\ast(g)}}^\ast]P_{M_{(V_r\cap\Omega_r )^+}^\bot}\in \mathcal{L}^p(M_{(V_r\cap\Omega_r )^+}^\bot)$$ for all $$p>\text{dim}_{\mathbb{C}} \hspace{0.1em} (V_r\cap\Omega_r )^+=\text{dim}_{\mathbb{C}} \hspace{0.1em} (V_{r-1}\cap\Omega_{r-1} )^+=\cdots=\text{dim}_{\mathbb{C}} \hspace{0.1em} (V\cap\Omega )^+.$$ 
Thus $[S_f^{\Omega},{S_g^{\Omega}}^\ast] \in \mathcal{L}^p(M_{V}^\bot)$ for all $p>\text{dim}_{\mathbb{C}} \hspace{0.1em} (V\cap\Omega )^+,$  if $(V\cap\Omega )^+\neq \emptyset.$ 

(2) It follows from \cite[Proposition 1.1 (c)]{HeH75} that  \begin{equation}\label{comee} [S_{f_1},S_{g_1}^*,\cdots,S_{f_l},S_{g_l}^*]=\sum_{\sigma,\tau\in{S}_l}\text{sgn}(\sigma\tau)[S_{f_{\sigma(1)}},S_{g_{\tau(1)}}^*]\cdots [S_{f_{\sigma(l)}},S_{g_{\tau(l)}}^*].\notag\\ \end{equation}
Applying (\ref{pisfs})  gives \begin{equation}\begin{split}\label{swgf} &[S_{f_1},S_{g_1}^*,\cdots,S_{f_l},S_{g_l}^*] \\
&=   \widetilde{\pi|_{\Omega_r}}^\ast [ S^{\Omega_r}_{\pi|_{\Omega_r}^\ast(f_1)},{S^{\Omega_r}_{\pi|_{\Omega_r}^\ast(g_1)}}^\ast,\cdots, S^{\Omega_r}_{\pi|_{\Omega_r}^\ast(f_1)},{S^{\Omega_r}_{\pi|_{\Omega_r}^\ast(g_1)}}^\ast]  \widetilde{\pi|_{\Omega_r}}  \\
&=  \widetilde{\pi|_{\Omega_r}}^\ast\sum_{\sigma,\tau\in{S}_l}\text{sgn}(\sigma\tau)  [ S^{\Omega_r}_{\pi|_{\Omega_r}^\ast(f_1)},{S^{\Omega_r}_{\pi|_{\Omega_r}^\ast(g_1)}}^\ast]  \cdots [S^{\Omega_r}_{\pi|_{\Omega_r}^\ast(f_{\sigma(l)})},{S^{\Omega_r}_{\pi|_{\Omega_r}^\ast(g_{\tau(l)})}}^\ast] \widetilde{\pi|_{\Omega_r}} \\
\end{split}\end{equation} 
Note that  the subvariety $V_r\subset \tilde{\Omega}_{r}$ is smooth. Combining (\ref{swgf})  with (\ref{Sfg}),  it follows from  (P3) and (P5)  that $[S_{f_1},S_{g_1}^*,\cdots,S_{f_l},S_{g_l}^*] $ can be written as the following form 
$$[S_{f_1},S_{g_1}^*,\cdots,S_{f_l},S_{g_l}^*] =  \widetilde{\pi|_{\Omega_r}}^\ast(\tau^\ast R_l \tau  +C_l )\widetilde{\pi|_{\Omega_r}},$$ where $R_l$ is a diagonal operator matrix of generalized Toeplitz operators with order no more than $-(l+1)$ and $C_l$ is an operator belonging to $\mathcal{L}^p$ for all $p>0.$ Thus $[S_{f_1},S_{g_1}^*,\cdots,S_{f_l},S_{g_l}^*]\in \mathcal{L}^p$ for all  $p> \frac{1}{l+1} \hspace{0.1em} {\text{dim}_{\mathbb{C}}\hspace{0.1em} (V\cap \Omega)}^+$ by Lemma \ref{lpq}. In particular,  we have $[S_{f_1},S_{g_1}^*,\cdots,S_{f_l},S_{g_l}^*]\in \mathcal{L}^1$ if $l\geq \text{dim}_{\mathbb{C}}\hspace{0.1em} (V\cap \Omega)^+.$ 

Suppose now $l>\text{dim}_{\mathbb{C}}\hspace{0.1em} (V\cap \Omega)^+.$ By direct calculation we obtain 
\begin{equation}\begin{split}\label{ssfgs}
[S_{f_1},S_{g_1}^*,\cdots,S_{f_l},S_{g_l}^*]
&=\sum_{\sigma,\tau\in{S}_l}\text{sgn}(\sigma\tau)[S_{f_{\sigma(1)}},S_{g_{\tau(1)}}^*]\cdots [S_{f_{\sigma(l)}},S_{g_{\tau(l)}}^*]\\
&= \sum_{i,j=1}^{l} (-1)^{i+j}\big( [S_{f_{i}},S_{g_j}^* [S_{f_{1}},S_{g_{1}}^*,\cdots,S_{f_{l}},S_{g_{l}}^\ast]^{\hat{i},\hat{j}^\ast}\\
& \quad \quad -  S_{g_{j}}^*[S_{f_{i}}, [S_{f_{1}},S_{g_{1}}^*,\cdots,S_{f_{l}},S_{g_{l}}^\ast]^{\hat{i},\hat{j}^\ast}]\big),\\
\end{split}
 \end{equation}
where the symbol $[S_{f_{1}},S_{g_{1}}^*,\cdots,S_{f_{l}},S_{g_{l}}^\ast]^{\hat{i},\hat{j}^\ast}$ means that the operators 
$S_{f_i},S_{g_j}^\ast$ are deleted from the antisymmetric sum $[S_{f_1},S_{g_1}^*,\cdots,S_{f_l},S_{g_l}^*]$ of $2l$ operators.
From the above, we see that $[S_{f_{1}},S_{g_{1}}^*,\cdots,S_{f_{l}},S_{g_{l}}^\ast]^{\hat{i},\hat{j}^\ast}\in\mathcal{L}^1$ when $l>\text{dim}_{\mathbb{C}}\hspace{0.1em} (V\cap \Omega)^+.$  By \cite[Lemma 1.3]{HeH75}, it follows that $$\text{Tr}\hspace{0.1em}[S_{f_{i}}+S_{f_{i}}^\ast, [S_{f_{1}},S_{g_{1}}^*,\cdots,S_{f_{l}},S_{g_{l}}^\ast]^{\hat{i},\hat{j}^\ast}]=\text{Tr}\hspace{0.1em}[\sqrt{-1}(S_{f_{i}}-S_{f_{i}}^\ast), [S_{f_{1}},S_{g_{1}}^*,\cdots,S_{f_{l}},S_{g_{l}}^\ast]^{\hat{i},\hat{j}^\ast}]=0.$$ Thus $\text{Tr}\hspace{0.1em}[S_{f_{i}}, [S_{f_{1}},S_{g_{1}}^*,\cdots,S_{f_{l}},S_{g_{l}}^\ast]^{\hat{i},\hat{j}^\ast}]=0.$ Similarly,  we have $$\text{Tr}\hspace{0.1em}[S_{f_{i}},S_{g_j}^* [S_{f_{1}},S_{g_{1}}^*,\cdots,S_{f_{l}},S_{g_{l}}^\ast]^{\hat{i},\hat{j}^\ast}]=0.$$ Combined with (\ref{ssfgs}), we deduce that  $[S_{f_1},S_{g_1}^*,\cdots,S_{f_l},S_{g_l}^*]$ has zero trace. This completes the proof.
\qed

\section{Applications}
This section is devoted to immediate applications of the main theorem in the $K$-homology theory and geometric invariant theory.

\subsection{The $K$-homology theory} In this subsection we pay attention to the case where $\Omega$ is a bounded  smooth strongly pseudoconvex domain in  $\mathbb{C}^d,$  and $V$ is a proper  analytic subvariety in a neighborhood of $\overline{\Omega}$ with possible only isolate singularities inside $\Omega,$ which transversely intersects the  smooth  boundary $\partial \Omega.$
It follows from Lemma \ref{phda} that the Bergman-Sobolev space $\mathcal{O}^0(\overline{\Omega})$ coincides with the usual Bergman space. It follows Corrollary \ref{corA} that the quotient submodule $M_V^\bot\subset\mathcal{O}^0(\overline{\Omega})$ is $p$-essentially normal for $p>\text{dim}_{\mathbb{C}} \hspace{0.1em} (V\cap\Omega).$ Let 
 $\mathcal{J}(M_V^\bot)$ be the $C^\ast$-algebra generated by the operators defined by module multiplication on $M_V^\bot$ and $\mathcal{C}(M_V^\bot)$  be the closed two-side ideal generated by  commutators in $\mathcal{J}(M_V^\bot).$ Thus $\mathcal{J}(M_V^\bot)$ consists of compact operators and hence $(\mathcal{J}(M_V^\bot)+ \mathcal{K}(M_V^\bot))/\mathcal{K}(M_V^\bot)$ is a commutative $C^\ast$-algebra, where $\mathcal{K}(M_V^\bot)$ is the compact operator ideal. Denote $X_{M_V^\bot}$ by the maximal ideal space, which is a compact Hausdorff space.    The Gelfand-Naimark duality  gives the isometrical isomorphism between $ (\mathcal{J}(M_V^\bot)+ \mathcal{K}(M_V^\bot))/\mathcal{K}(M_V^\bot) $ and $C(X_{M_V^\bot}),$ which produces the short exact sequence of  $C^\ast$-algebras
  \begin{equation}\label{exat} 0\rightarrow \mathcal{K}(M_V^\bot)\rightarrow  \mathcal{J}(M_V^\bot)+ \mathcal{K}(M_V^\bot)\rightarrow C(X_{M_V^\bot})\rightarrow 0.\end{equation}

By the BDF-theory in the $C^\ast$-algebra, the short exact sequence (\ref{exat}) represents an equivalence class denoted by $[M_V^\bot]$ in the odd  $K$-homology group $K_1(X_{M_V^\bot}),$ we refer the reader to \cite[Chapter 5]{Upm96} for more details about $K$-theory. Combined with \cite[Theorem 3]{Dou06A}, it follows the following result.

\begin{prop} \label{Kth} Let $\Omega\subset \mathbb{C}^d$ be a bounded  smooth strongly pseudoconvex domain and $V$ be a proper  analytic subvariety in a neighborhood of $\overline{\Omega}$ with possible only isolate singularities inside $\Omega,$ which transversely intersects the  smooth  boundary $\partial \Omega.$   Then the homology  class $[M_V^\bot]$ represented by the Bergman quotient submodule is in $K_1(V\cap \partial \Omega).$
\end{prop}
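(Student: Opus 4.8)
The plan is to read the statement off from Corollary~\ref{corA} together with the structural theorem of Douglas. First, applying Corollary~\ref{corA}(1) with $s=0$ and recalling from Lemma~\ref{phda}(2) that $\mathcal{O}^0(\overline{\Omega})$ is the ordinary Bergman space, one gets that $M_V^\bot$ is $p$-essentially normal for every $p>\text{dim}_{\mathbb{C}}(V\cap\Omega)$; in particular all the commutators $[S_{z_i},S_{z_j}^\ast]$ are compact, so the commutator ideal $\mathcal{C}(M_V^\bot)$ lies in $\mathcal{K}(M_V^\bot)$. This is precisely the hypothesis that makes the discussion preceding the statement go through: the quotient $(\mathcal{J}(M_V^\bot)+\mathcal{K}(M_V^\bot))/\mathcal{K}(M_V^\bot)$ is a commutative $C^\ast$-algebra, Gelfand--Naimark duality produces the short exact sequence~(\ref{exat}), and BDF-theory attaches to it a class $[M_V^\bot]\in K_1(X_{M_V^\bot})$, where $X_{M_V^\bot}$ is the maximal ideal space.

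It then remains to identify $X_{M_V^\bot}$ with $V\cap\partial\Omega$. Under the Gelfand transform the image of each compression $S_{z_i}$ in the commutative quotient corresponds to the $i$-th coordinate function, so $X_{M_V^\bot}$ is canonically homeomorphic to the joint essential spectrum of the commuting tuple $(S_{z_1},\dots,S_{z_d})$ acting on $M_V^\bot$, viewed as a compact subset of $\mathbb{C}^d$. Since module multiplication on $M_V^\bot$ is the compression of multiplication on the Bergman space and $M_V$ consists of the functions vanishing on $V\cap\Omega$, this essential spectrum is contained in $\overline{V\cap\Omega}$; and by standard localization away from the relatively compact singular locus $\text{Sing}(V)\Subset\Omega$, together with the classical fact that the coordinate tuple on the Bergman space of a strongly pseudoconvex domain has essential spectrum equal to the boundary, no interior point of $V\cap\Omega$ is essential. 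The remaining containment, that every point of $V\cap\partial\Omega$ is essential, and with it the precise equality $X_{M_V^\bot}\cong V\cap\partial\Omega$, is provided by \cite[Theorem~3]{Dou06A}, whose hypotheses (transversality of $V$ to $\partial\Omega$ and isolated singularities interior to $\Omega$) are exactly those assumed here. Hence $[M_V^\bot]\in K_1(V\cap\partial\Omega)$.

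The step that carries the real weight is this identification of the maximal ideal space with $V\cap\partial\Omega$: the genuinely new ingredient---compactness of the commutators for the Bergman quotient submodule---is already packaged in Corollary~\ref{corA}, while the spectral picture is Douglas's theorem, so the remaining work is bookkeeping, namely checking that his hypotheses hold in the present framework and that passing from the Hardy module to the Bergman module leaves the joint essential spectrum unchanged.
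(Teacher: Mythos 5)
Your proposal is correct and follows essentially the same route as the paper: compactness of the commutators is extracted from Corollary \ref{corA} (with $s=0$ and the identification of $\mathcal{O}^0(\overline{\Omega})$ with the Bergman space via Lemma \ref{phda}), the BDF extension (\ref{exat}) yields the class $[M_V^\bot]\in K_1(X_{M_V^\bot})$, and the identification of the maximal ideal space with $V\cap\partial\Omega$ is delegated to \cite[Theorem 3]{Dou06A}. Your extra remarks on the joint essential spectrum are a harmless elaboration of what that citation already supplies.
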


In the case where $\Omega=\mathbb{B}^d$ and $ V$ is a nice algebraic variety, the homology class $[M_V^\bot]$ can be explicitly constructed by the resolution of the Hilbert module, see \cite{DJTY18} and reference therein.

\subsection{The geometric invariant theory}
Let  $H^2_d$ be the Drury-Arveson space on the unit ball $\mathbb{B}^d,$ then $H^2_d$ and $\mathcal{O}^{d/2}(\overline{\mathbb{B}^d})$ are coincide by Lemma \ref{phda}. For each $r\geq1,$   the coordinate multiplier $z_i,i=1,\cdots,d,$ determine a natural multiplicative operator $d$-tuple on $H^2_d\otimes\mathbb{C}^r,$   denoted by  $M= (M_1,\cdots, M_d)$ without ambiguity,
which introduces a natural graded module structure of finite rank on  $H^2_d\otimes\mathbb{C}^r$  over the polynomial algebra $\mathbb{C}[\bm{z}].$ Then the $d$-tuple $M$ is a $d$-contraction in the sense of $$ \Vert M_1f_1+\cdots+M_df_d\Vert^2 \leq  \Vert f_1\Vert^2+\cdots+ \Vert f_1\Vert^2$$  for all $f_1,\cdots,f_d\in H^2 \otimes\mathbb{C}^r.$

 Let $\delta_T$ be the Dirac operator associated an arbitrary $d$-contraction $T$ on $H^2_d\otimes\mathbb{C}^r,$  which is a self-adjoint operator acting on the Hilbert space  $(H^2_d\otimes\mathbb{C}^r)\otimes \Lambda \mathbb{C}^d,$ where  $\Lambda \mathbb{C}^d$ is the exterior algebra of  $\mathbb{C}^d $ which is still a Hilbert space in the natural sense, see more details in \cite{Arv05}. The Dirac operator of $T$ is uniquely determined by $T$ up to isomorphism.   By the $\mathbb{Z}_2$-graded structure, it follows  the  orthogonal decomposition $$ (H^2_d\otimes\mathbb{C}^r)\otimes \Lambda \mathbb{C}^d=H_+\oplus H_-,$$ where vectors in $H_+$ (resp. $H_-$ ) are called even (resp. odd). Then the  Dirac operator $\delta_T$ has the 
following  matrix representation 
$$\delta_T=\left[\begin{array}{cc}0 & \delta_+^\ast \\ \delta_+ & 0\end{array}\right]$$ such that  $ \delta_+ H_+\subset H_-.$ The  the  Fredholm index of the $d$-tuple  $T$ is defined to be  $$\text{Ind}\hspace{0.1em} (T)= \dim \hspace{0.1em}\text{Ker}\hspace{0.1em} \delta_+-\dim\hspace{0.1em} \text{Ker}\hspace{0.1em} \delta_+^\ast,$$ see details in \cite[Section 1]{Arv05}. The $d$-tuple $T$ is said to be Fredholm if its Dirac operator $\delta_T$ is a Fredholm operator; in this case, its index is an integer. 
 
 On the other hand, for every finite rank $d$-contraction $T$ on $H^2_d\otimes\mathbb{C}^r,$ it is possible to define a real number $K(T)$ in the interval $[0, \text{rank} \hspace{0.1em}T]$ defined as the integral of the trace of a certain matrix-valued function over the unit sphere $\partial \mathbb{B}^d,$ which is a geometric invariant called the curvature invariant of the $d$-tuple $T.$  The following result, which is related to the geometric invariant and the Fredholm index of the operator tuple, extends \cite[Proposition 7.10]{Arv00} and provides a positive answer to \cite[Problem D]{Arv07} for the compression of the $d$-shift of rank $1.$
   
  \begin{prop}\label{ginv}  Suppose that   $V$ is a  proper analytic  subvariety  in some neighborhood of $\overline{\mathbb{B}^d}.$ If $V$ is smooth on $\mathbb{B}^d$  and  intersects transversally with  $\partial \mathbb{B}^d,$ then the following hold.
  
(1) The  compression   $S$ of the $d$-tuple of the respective coordinate operators on the quotient submodule  $M_V^\bot\subset H^2_d$ is Fredholm and 
 \begin{equation}\label{gb}\text{Ind}\hspace{0.1em} (S)=0.\notag\\ \end{equation} 

 (2)  Furthermore, if $ V$ is a  homogeneous algebraic variety in $\mathbb{C}^d$ with only possible isolate singularity of the origin, then the  compression   $S$ on the  graded quotient submodule  $M_V^\bot\subset H^2_d$ is Fredholm and 
 \begin{equation}\label{gbc}K(S)=\text{Ind}\hspace{0.1em} (S)=0.\notag\\ \end{equation} 
 \end{prop}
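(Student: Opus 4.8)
The plan is to read off both statements from Corollary~\ref{corA} together with the index- and curvature-theoretic framework of Arveson's program \cite{Arv00,Arv02,Arv05}. By Lemma~\ref{phda}(4) the Bergman--Sobolev space $\mathcal{O}^{d/2}(\overline{\mathbb{B}^d})$ with its canonical norm equals the Drury--Arveson space $H^2_d$, so Corollary~\ref{corA}(1) with $s=d/2$ shows that $M_V^\bot\subset H^2_d$ is $p$-essentially normal for all $p>\text{dim}_{\mathbb{C}}\hspace{0.1em}(V\cap\mathbb{B}^d)$. Since $V$ is a \emph{proper} subvariety we have $\text{dim}_{\mathbb{C}}\hspace{0.1em}(V\cap\mathbb{B}^d)\le d-1<\infty$, so in particular the self-commutators $[S_{z_i},S_{z_j}^{\ast}]$ are compact and $S=(S_{z_1},\dots,S_{z_d})$ is an essentially normal commuting $d$-contraction of rank one (it is cyclic, generated by the compression of the constant $1$). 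Hence the image of the $C^{\ast}$-algebra generated by $S_{z_1},\dots,S_{z_d}$ in the Calkin algebra is commutative, and the Dirac operator $\delta_S$ will be Fredholm once $0$ is shown to lie outside the joint essential (Taylor) spectrum $\sigma_{\mathrm{e}}(S)$. For part~(1) I would deduce $\sigma_{\mathrm{e}}(S)\subseteq\partial\mathbb{B}^d$ from the generalized Toeplitz model underlying the proof of Proposition~\ref{pnor} (the finitely many zero-dimensional components of $V\cap\mathbb{B}^d$ only contribute a finite-dimensional summand, cf.\ Proposition~\ref{pnoc}): modulo a compact perturbation the Fredholm behaviour of the Koszul complex of $S-\lambda$ is governed by that of the tuple of generalized Toeplitz operators on the Hardy spaces $\mathcal{O}^{0}(\partial\mathbb{B}^d\cap V_i)$ with symbols $z_j|_{\partial\mathbb{B}^d\cap V_i}$, whose joint essential spectrum is $\bigcup_i(\partial\mathbb{B}^d\cap V_i)\subseteq\partial\mathbb{B}^d$. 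Since $0\notin\partial\mathbb{B}^d$, the operator $\delta_S$ is Fredholm.

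For the vanishing of the index I would invoke the trace formula for the Fredholm index of a commuting tuple (the multivariable Helton--Howe formula \cite{HeH75}, in the form used in \cite{Arv02,Arv05}): once $\delta_S$ is Fredholm and the antisymmetric sum $[S_{z_1},S_{z_1}^{\ast},\dots,S_{z_d},S_{z_d}^{\ast}]$ is of trace class, one has $\text{Ind}\hspace{0.1em}(S)=c_d\,\text{Tr}\hspace{0.1em}[S_{z_1},S_{z_1}^{\ast},\dots,S_{z_d},S_{z_d}^{\ast}]$ with $c_d$ a nonzero universal constant. Corollary~\ref{corA}(2) applied with $l=d$ gives precisely that this antisymmetric sum lies in $\mathcal{L}^{1}$ (as $d\ge\text{dim}_{\mathbb{C}}\hspace{0.1em}(V\cap\mathbb{B}^d)$) and that its trace vanishes (as $d>\text{dim}_{\mathbb{C}}\hspace{0.1em}(V\cap\mathbb{B}^d)$, $V$ being proper). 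Therefore $\text{Ind}\hspace{0.1em}(S)=0$, which proves~(1).

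For part~(2), $V$ is a homogeneous algebraic variety whose only possible singularity is the origin; since $0$ lies in the interior of $\mathbb{B}^d$, $V$ is smooth on $\partial\mathbb{B}^d$ and, being a cone, meets the sphere transversally, so Corollary~\ref{corA} still applies and yields $p$-essential normality of $M_V^\bot\subset H^2_d$. Fredholmness of $\delta_S$ now follows by the same reasoning after first passing to Hironaka's embedded desingularization as in the proof of Theorem~\ref{main}: modulo compacts, $S$ becomes a direct sum of generalized Toeplitz tuples on the resolved variety whose symbols are pullbacks of $z_1,\dots,z_d$ under a map that is biholomorphic near the boundary, and these have no common zero on $\partial\mathbb{B}^d$. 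Hence $\text{Ind}\hspace{0.1em}(S)=0$ exactly as in~(1), by Corollary~\ref{corA}(2) and the trace formula. Finally, since $V$ is homogeneous, $M_V^\bot$ is a graded, finite-rank $d$-contraction, so Arveson's Gauss--Bonnet--Chern theorem in the graded case \cite{Arv00,Arv02} gives that $K(S)$ and $\text{Ind}\hspace{0.1em}(S)$ agree up to the sign $(-1)^{d}$, whence $K(S)=0$; equivalently, $K(S)$ equals the Euler characteristic of $M_V^\bot$, which is read off from the Hilbert function $\dim(M_V^\bot)_n=\dim(\mathbb{C}[\bm{z}]/I(V))_n$ --- eventually a polynomial in $n$ of degree $\text{dim}_{\mathbb{C}}\hspace{0.1em}V-1\le d-2$ --- and therefore vanishes. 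This proves~(2).

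The main obstacle is the Fredholmness of $\delta_S$, i.e.\ the exclusion of $0$ from the joint essential spectrum: for part~(1) this rests on identifying the boundary behaviour of $S$ through the generalized Toeplitz model of Proposition~\ref{pnor}, and for part~(2) one must first replace $\Omega$ by the desingularization of Theorem~\ref{main} before the same argument applies (the interior singularity of $V$ at the origin is precisely what must be resolved). Once Fredholmness is secured, the vanishing of $\text{Ind}\hspace{0.1em}(S)$ and of $K(S)$ is a formal consequence of Corollary~\ref{corA}(2) and the classical trace and Gauss--Bonnet--Chern formulae, so the real content of Proposition~\ref{ginv} is already packaged in Theorem~\ref{main}.
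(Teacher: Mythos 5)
Your overall architecture matches the paper's: reduce $\text{Ind}(S)$ to the trace of the antisymmetric sum $[S_1,S_1^*,\dots,S_d,S_d^*]$, kill that trace with Corollary~\ref{corA}(2) (using that $V$ proper forces $d>\text{dim}_{\mathbb{C}}(V\cap\mathbb{B}^d)$), and for part (2) invoke Arveson's graded identity $(-1)^dK(S)=\text{Ind}(S)$ from \cite{Arv02, Arv05} together with purity of the graded quotient module. That part of your argument is correct and is exactly what the paper does (your extra Hilbert-function computation of $K(S)$ is redundant once the identity with the index is in hand).

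The divergence, and the soft spot, is in how you obtain Fredholmness and the index--trace formula. You assert that ``once $\delta_S$ is Fredholm and the antisymmetric sum is of trace class, one has $\text{Ind}(S)=c_d\,\text{Tr}[\cdots]$''; that is not a theorem in this generality --- the multivariable index--trace formulas require quantitative Schatten hypotheses, not merely trace-class-ness of the top antisymmetric sum. The paper supplies exactly the right hypothesis by a short computation you omit: the defect operator $1-\sum_i S_iS_i^*$ has \emph{finite rank} (by \cite[Lemma 2.8]{Arv98} and \cite[Proposition 4.1]{Arv05}), so essential normality from Corollary~\ref{corA}(1) gives
$1-\sum_i S_i^*S_i=\bigl(1-\sum_i S_iS_i^*\bigr)+\sum_i[S_i,S_i^*]\in\mathcal{L}^p$ for all $p>\text{dim}_{\mathbb{C}}(V\cap\mathbb{B}^d)$, hence in $\mathcal{L}^d$ since $V$ is proper. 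This is precisely the hypothesis of \cite[Proposition 8.1]{WaX23}, which then delivers \emph{both} the Fredholmness of $S$ and the identity $\text{Ind}(S)=\text{Tr}[S_1,S_1^*,\dots,S_d,S_d^*]$ in one stroke. Your alternative route to Fredholmness --- locating the joint essential spectrum in $\partial\mathbb{B}^d$ via the generalized Toeplitz model and the desingularization --- is plausible in outline (it is consistent with Proposition~\ref{Kth}) but is only sketched and is substantially more work than the defect-operator argument; more importantly, even granting it, you would still need the Schatten estimate on $1-\sum_i S_i^*S_i$ to legitimize the trace formula. So the missing ingredient is the finite-rank defect computation; with it, your proof collapses to the paper's.
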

\begin{proof} (1) Denote $S=(S_1,\cdots,S_d).$
  Corollary \ref{corA} (1) implies that  $[S_i,S_i^\ast]\in \mathcal{L}^p,$ for all $p> \text{dim}_{\mathbb{C}}\hspace{0.1em} (V\cap \mathbb{B}^d), i=1,\cdots,d.$  
Since the defect operator $1-\sum_{i=1}^d  S_i S_i^\ast$ has finite rank by \cite[Lemma 2.8]{Arv98} and \cite[Proposition 4.1]{Arv05}, it follows that 
$$ 1-\sum_{i=1}^d S_i^\ast S_i=1-\sum_{i=1}^d  S_i S_i^\ast+\sum_{i=1}^d[S_i,S_i^\ast] \in \mathcal{L}^p,$$ for all $p> \text{dim}_{\mathbb{C}}\hspace{0.1em} (V\cap \mathbb{B}^d) .$   Combined  with the condition that the subvariety  $V$ is proper,  it follows that  $ 1-\sum_{i=1}^d S_i^\ast S_i  \in \mathcal{L}^d.$  Then,   \cite[Proposition 8.1]{WaX23} implies that $$ \text{Ind}\hspace{0.1em} (S)=   \text{Tr}\hspace{0.1em}[S_{1},S_{1}^*,\cdots,S_{d},S_{d}^*].$$ Combing this with Corrollary \ref{corA} (2)  follows the index formula  $\text{Ind}\hspace{0.1em} (S)=0.$

(2) Since the quotient submodule  $M_V^\bot\subset H^2_d$ 
is pure contractive (see \cite[page 230]{Arv07}),  combined with  \cite[Theorem B]{Arv02} and \cite[Theorem 4.3]{Arv05},  it follows that 
 $$(-1)^d K(S)=\text{Ind}\hspace{0.1em} (S).$$ Then  Proposition \ref{ginv} (1) implies the desired identity. This completes the proof.
 \end{proof}

\hspace{0.9em}

{\noindent{\bf{Acknowledgements.}}

The author would like to thank Prof. Harald Upmeier for his insightful discussions and valuable suggestions. The author also wishes to thank Prof. Jingbo Xia for his illuminating discussions on the antisymmetric sum. The author was partially supported by the National Natural Science Foundation of China (12201571).

 \bibliographystyle{alpha}

\begin{thebibliography}{99}
\small
\bibitem{AHV18}
 J. Aroca, H. Hironaka,  and J. Vicente,  \emph{ Complex analytic desingularization,}  Springer, Tokyo, 2018.
\bibitem{Arv98}
W. Arveson, \emph{Subalgebras of $C^*$-algebras $\uppercase\expandafter{\romannumeral3}$: Multivariable operator theory,} Acta Math. \textbf{181} (1998), 159-228.
\bibitem{Arv00} W. Arveson,\emph{The curvature invariant of a Hilbert module over $\mathbb{C}[z_1,\cdots,z_d]$,} J. Reine Angew. Math. \textbf{522} (2000),  173-236. 
\bibitem{Arv02}
W. Arveson, \emph{The Dirac operator of a commuting $d$-tuple,} J. Funct. Anal. \textbf{189} (2002), 53-79.
\bibitem{Arv05}
W. Arveson, \emph{$p$-summable commutators in dimension $d$,} J. Oper. Theory \textbf{54} (2005), 101-117. 
\bibitem{Arv07}
W. Arveson, \emph{Quotients of standard Hilbert modules,} Trans. Amer. Math. Soc. \textbf{359} (2007), no.12, 6027-6055.
\bibitem{Bou79}
L. Boutet de Monvel, \emph{On the index of Toeplitz operators in several complex variables,} Invent. Math. \textbf{50} (1979), 249-272.
\bibitem{BG81}
L. Boutet de Monvel, V. Guillemin, \emph{The Spectral Theory of Toeplitz Operators,} Ann. of Math. Stud. \textbf{99}, Princeton University Press, Princeton, 1981.
\bibitem{BS81}
L. Boutet de Monvel, J. Sj\"ostrand, \emph{Sur la singularit\'{e} des noyaux de {B}ergman et de {S}zeg\"{o},}   \'{E}quations aux {D}\'{e}riv\'{e}es {P}artielles de {R}ennes, Ast\'{e}risque, No. \textbf{34-35} (1975), 123-164.
 \bibitem{CM03}
C. Camacho, H. Movasati, \emph{Neighborhoods of Analytic Varieties,} Monogr. IMCA, vol. 35, Instituto de Matem\'{a}tica y Ciencias Afines, IMCA, Lima, 2003.
 \bibitem{Coy90}
J. Conway,  \emph{A course in functional analysis. Second edition,} Graduate Texts in Mathematics, Vol, 96. Springer-Verlag, New York, 1990.
\bibitem{Dem09}
J. Demailly, \emph{Complex Analytic and Differential Geometry,} https://www-fourier.ujf-grenoble.fr/~demailly/manuscripts/agbook.pdf, 2009.
\bibitem{Din24}
L. Ding, \emph{The biholomorphic invariance of essential normality on bounded symmetric domains}, Trans. Amer. Math. Soc. (2024),  https://doi.org/10.1090/tran/9167. 
\bibitem{Dou06A}
R. Douglas, \emph{A new kind of index theorem,} World Sci. Publ., Hackensack, NJ, 2006, 369-382.
\bibitem{DGW22} R. Douglas, K. Guo and  Y. Wang,  \emph{On the $p$-essential normality of principal submodules of the Bergman module on strongly pseudoconvex domains,} Adv. Math. \textbf{407} (2022),  No. 108546, 41 pp.  
\bibitem{DJTY18} 
 R. Douglas, M. Jabbari, X. Tang, and G. Yu, \emph{A new index theorem for monomial ideals by resolutions,} J. Funct. Anal. \textbf{275}  (2018), 735-760. 
\bibitem{DTY16} 
R. Douglas, X. Tang,  and  G. Yu,  \emph{An analytic Grothendieck Riemann Roch theorem,} Adv. Math. \textbf{294} (2016), 307-331.
 \bibitem{EnE15}
M. Engli\v{s}, J. Eschmeier, \emph{Geometric Arveson-Douglas conjecture,} Adv. Math. \textbf{274} (2015), 606-630.  
\bibitem{Fis76}
G. Fischer, \emph{Complex analytic geometry,} Lecture Notes in Mathematics,  Springer-Verlag, Berlin-New York,  1976.
\bibitem{Gra58}
H. Grauert, \emph{On Levi's problem and the imbedding of real-analytic manifolds,} Ann. of Math. (2) \textbf{68} (1958), 460-472.
\bibitem{Gra62}
H. Grauert,
\emph{\"Uber Modifikationen und exzeptionelle analytische Mengen,} Math. Ann. \textbf{146} (1962),  331-368. 
\bibitem{GR84}
H. Grauert, R. Remmert, \emph{Coherent Analytic Sheaves,} Springer, Berlin,  1984.
\bibitem{GuWk08}
K. Guo, K. Wang, \emph{Essentially normal Hilbert modules and $K$-homology,} Math. Ann. \textbf{340} (2008), 907-934. 
\bibitem{HeH75}
 J. Helton, R. Howe, \emph{Traces of commutators of integral operators,} Acta Math. \textbf{135} (1975), 271-305.
\bibitem{Hir64}
H. Hironaka, \emph{Resolution of singularities of an algebraic variety over a field of characteristic zero. I, II,} Ann. of Math. (2) \textbf{79}  (1964), 109-203, 205-326.
\bibitem{KS12} 
M. Kennedy, O. Shalit, \emph{Essential normality and the decomposability of algebraic varieties,}
New York J. Math. \textbf{18} (2012), 877-890.
\bibitem{Koh63}
J. Kohn, \emph{Harmonic integrals on strongly pseudo-convex manifolds. I,} Ann. of Math. (2) \textbf{78} (1963), 112-148. 
\bibitem{KoR65}
J. Kohn, H. Rossi, \emph{On the extension of holomorphic functions from the boundary of a complex manifold,} Ann. of Math. (2) \textbf{81} (1965),  451-472. 
\bibitem{Kol07}
J. Koll\'{a}r, \emph{Lectures on Resolution of Singularities,} Ann. of Math. Stud. \textbf{166}, Princeton University, Princeton,  2007.
\bibitem{See69}
R. Seeley, \emph{Topics in pseudo-differential operators,}  Pseudo-Diff. Operators (1969),  167-305. 
\bibitem{Tay81}
M. Taylor, \emph{Pseudodifferential operators}, Princeton Mathematical Series \textbf{34}, Princeton University Press, Princeton, 1981.
 \bibitem{Voi07}
 C. Voisin, \emph{ Hodge theory and complex algebraic geometry. \uppercase\expandafter{\romannumeral1},}   Cambridge Studies in Advanced Mathematics  \textbf{76}, Cambridge University Press, Cambridge, 2007.
 \bibitem{Upm96}
 H. Upmeier, \emph{Toeplitz operators and index theory in several complex variables,} Operator
Theory: Advances and Applications \textbf{81}, Birkh\"auser Verlag, Basel, 1996.
 \bibitem{UpW16}
 H. Upmeier, K. Wang, \emph{Dixmier trace for Toeplitz operators on symmetric domains,} J. Funct. Anal. \textbf{271} (2016), 532-565.
 \bibitem{WaX19}
 Y. Wang, J. Xia, \emph{Essential normality for quotient modules and complex dimensions,} J. Funct. Anal. \textbf{276} (2019), 1061-1096.  
 \bibitem{WaX23}
 Y. Wang, J. Xia, \emph{Trace invariants associated with quotient modules of the Hardy module}, J. Funct. Anal. \textbf{284} (2023), No. 109782, 48 pp.
 \bibitem{Win23}
D. Winterrose, \emph{Algebras of pseudo-differential operators acting on holomorphic Sobolev spaces}
J. Funct. Anal. \textbf{285} (2023), No. 109972, 31 pp.
\end{thebibliography}
 
\end{document}